\numberwithin{equation}{section}
\theoremstyle{plain}
\newtheorem{thm}{Theorem}[section]
\newtheorem{lem}[thm]{Lemma}
\newtheorem{prop}[thm]{Proposition}
\newtheorem{cor}[thm]{Corollary}
\theoremstyle{definition}
\newtheorem{deff}[thm]{Definition}
\newtheorem{example}[thm]{Example}
\newtheorem{remark}[thm]{Remark}
\newtheorem{question}[thm]{Question}
\newtheorem{introques}{Question}
\newcommand{\kk}{\Bbbk}
\theoremstyle{remark}
\def\Z{\mathbb{Z}}
\def\MM{\mathcal{M}}
\def\OO{\mathcal{O}}
\def\G{\mathcal{G}}
\def\fg{\mathfrak{g}}
\def\fa{\mathfrak{a}}
\def\s{\sigma}
\def\fm{\mathfrak{m}}
\def\a{\alpha}
\def\g{\gamma}
\def\o{\otimes}
\def\e{\epsilon}
\def\to{\rightarrow}
\def\ol{\overline}
\def\i{\mathbf{i}}
\def\tr{\operatorname{tr}}
\def\ord{\operatorname{ord}}
\def\id{\operatorname{id}}
\def\Ext{\operatorname{Ext}}
\def\End{\operatorname{End}}
\def\Hom{\operatorname{Hom}}
\def\irr{\operatorname{Irr}}
\def\ad{\operatorname{ad}}
\def\stab{\operatorname{stab}}
\def\GL{\operatorname{GL}}
\def\Reg{\operatorname{Reg}}
\def\b{\beta}
\begin{document}
\nocite{*}
\title{On Hopf algebras of dimension $p^n$ in  characteristic $p$}
\date{March 2023}

\author[Ng]{Siu-Hung Ng}
\address{Department of Mathematics, Louisiana State University, Baton Rouge, LA 70803, USA} 
\email{rng@math.lsu.edu}

\author[Wang]{Xingting Wang}
\address{Department of Mathematics, Louisiana State University, Baton Rouge, LA 70803, USA}
\email{xingtingwang@lsu.edu}

\subjclass[2020]{Primary 16T05}
\keywords{Hopf algebra, positive characteristic, classification, cleft extension}

\date{}

\maketitle
{\centering\it This paper is dedicated to the memory of Earl J. Taft.\par}

\begin{abstract}
Let $\kk$ be an algebraically closed field of characteristic $p>0$. We study the general structures of $p^n$-dimensional Hopf algebras over $\kk$ with $p^{n-1}$ group-like elements or a primitive element generating a $p^{n-1}$-dimensional Hopf subalgebra. As applications, we have proved that Hopf algebras  of dimension $p^2$ over $\kk$ are pointed or basic for $p \le 5$, and provided a list of characterizations of the Radford algebra $R(p)$. In particular, $R(p)$ is the unique nontrivial extension of $\kk[C_p]^*$ by $\kk[C_p]$, where $C_p$ is the cyclic group of order $p$.  In addition, we have proved a vanishing theorem for some 2nd Sweedler cohomology group and investigated the extensions of $p$-dimensional Hopf algebras. All these extensions have been identified and shown to be pointed.
\end{abstract}

\section{Introduction}
Hopf algebras are generalizations of groups and Lie algebras. In the last few decades, they have played an essential role in invariants of knots, links, and 3-manifolds via their representations. In particular, the representation categories of finite-dimensional Hopf algebras provide fundamental examples of finite tensor categories (see, e.g., \cite{Kas, EGNO}).

Let $\kk$ be an algebraically closed field and $p$ a prime number. It has long been a program classifying finite-dimensional Hopf algebras over $\kk$ of characteristic zero. Kaplansky first conjectured that the only $p$-dimensional Hopf algebra in characteristic zero is the group algebra, later proved by Zhu \cite{Zhu94}. For $p^2$-dimensional Hopf algebras, Masuoka proved that the semisimple ones are group algebras \cite{Masuoka96}. At that time, the only known nonsemisimple Hopf algebras of dimension $p^2$ were the Taft algebras, which were introduced by Taft in \cite{Taft} and are described as follows.

\begin{deff}\cite{Taft}
  Let $\kk$ be a field, and $\omega \in \kk$ a primitive $n$th root of unity for some positive integer $n$. The \emph{Taft algebra} $T_n(\omega)$ is the free algebra $\kk\langle x,g\rangle$ subject to the relations
\[
g^n=1,\quad x^n=0,\quad xg=\omega gx.
\]
The Hopf algebra structure on the Taft algebra $T_p(\omega)$ is given by
\begin{align*}
\Delta(g)&=g\otimes g, & \epsilon(g)&=1, & S(g)&=g^{-1},\\
\Delta(x)&=x\otimes 1+g\otimes x, & \epsilon(x)&=0, &S(x)&=-g^{-1}x.
\end{align*} 
\end{deff}

Montgomery, in several international conferences, asked (cf. Section 5 of \cite{Mont98}):

\begin{question}
   Suppose the base field $\kk$ is algebraically closed of characteristic zero, and $p$ is a prime number. Are the Taft algebras $T_p(\omega)$ the only nonsemisimple Hopf algebras in dimension $p^2$? 
\end{question}
The first author of this paper gave an affirmative answer to Montgomery's question in \cite{Ng02}. 

Meanwhile, the classification of finite-dimensional Hopf algebras over an algebraically closed field $\kk$ of positive characteristic $p$ remains wide open. For any semisimple and cosemisimple Hopf algebra in positive characteristic, Etingof and Gelaki showed that it could be lifted to a semisimple Hopf algebra in characteristic zero \cite{EG98}. Hence it yields the same classification result. Recently, the classification of the $p$-dimensional Hopf algebras over $\kk$ in characteristic $p$ has been accomplished by the authors of this paper in \cite{NW19}. Etingof later generalizes this result to finite tensor categories in \cite{Etingof20}. 

Turning to the $p^2$-dimensional Hopf algebras, the pointed ones are classified in \cite{WW14}. Based on this classification,  there is only one \emph{pointed} noncommutative and noncocommutative $p^2$-dimensional Hopf algebra. This Hopf algebra was discovered by Radford in \cite{Rad1977} and is described as follows.

\begin{deff}\cite{Rad1977}
Let $\kk$ be an algebraically closed field of characteristic $p>0$. The \emph{Radford algebra} $R(p)$ is the free algebra $\kk\langle x,g\rangle$ subject to the relations
\[
g^p=1,\quad x^p=x,\quad [g,x]=g(g-1).
\]
The Hopf algebra structure on $R(p)$ is given by
\begin{align*}
\Delta(g)&=g\otimes g, & \epsilon(g)&=1, & S(g)&=g^{-1},\\
\Delta(x)&=x\otimes 1+g\otimes x, & \epsilon(x)&=0, &S(x)&=-g^{-1}x.
\end{align*}
\end{deff}
It is clear that the Radford algebra could be over any field of characteristic $p$. However, we generally assume $\kk$ is algebraically closed and the notation $R(p)$ is defined under this assumption.

The first author of this paper, on many occasions, asked the following question: 

\begin{introques}\label{Q2}
Let $\kk$ be an algebraically closed field of characteristic $p > 0$.  Is the Radford algebra $R(p)$ the unique noncommutative and noncocommutative $p^2$-dimensional Hopf algebra over $\kk$  up to isomorphism?
\end{introques}

 So far, there is no example of $p^2$-dimensional Hopf algebra $H$ over $\kk$ such that both $H$ and $H^*$ are not pointed. An affirmative answer to the following question will lead to a complete classification of $p^2$-dimensional Hopf algebras over $\kk$, and hence an affirmative answer to Question \ref{Q2}.
\begin{introques}\label{Q3}
Let $\kk$ be an algebraically closed field of characteristic $p > 0$. Must every $p^2$-dimensional Hopf algebra  over $\kk$ be pointed? 
\end{introques}

In this paper, most of our results are stated over an algebraically closed base field $\kk$ of positive characteristic $p$. In the first part of our paper, we study the general structures of the $p^n$-dimensional Hopf algebras over $\kk$ with $n \ge 2$. We first show that any noncosemisimple $p^n$-dimensional Hopf algebra over $\kk$ with $p^{n-1}$ group-like elements must be pointed. Along this line of thoughts, one would like to understand those $p^n$-dimensional Hopf algebras $H$ which admit a $p^{n-1}$-dimensional Hopf subalgebra $\kk[x]$ generated by a primitive element $x$. In this case, assuming $x$ is semisimple and $H$ is nonsemisimple, we show that $H$ has exactly $p^{n-1}$ indecomposable projectives whose dimensions are equal to $p$. Moreover, $H$ is involutive if it is unimodular. When $H$ is $p^2$-dimensional, we are able to show if it contains a primitive element, then its coradical $H_0$ cannot have dimension greater than $p$. As an application, we answer Questions \ref{Q2} and \ref{Q3} affirmatively for $p\le 5$, which completes the classification of all $p^2$-dimensional Hopf algebras over $\kk$ for $p\le 5$.  

For the second part of our paper, we introduce the notion of twist equivalence for cleft extensions to classify the $p^2$-dimensional Hopf algebras, which are extensions of $p$-dimensional ones. We show the second Sweedler cohomology $H^2_{sw}(\kk[G]^*, A)$ is trivial for any elementary abelian $p$-group $G$ which acts as automorphisms on a finite-dimensional commutative algebra $A$ over $\kk$. Consequently, we  describe the corresponding cleft extensions in terms of twist equivalence. Independent of the classification result of pointed Hopf algebras of dimension $p^2$ \cite{WW14}, we provide a list of characterizations of
the Radford algebra $R(p)$. In particular, $R(p)$ is the unique nontrivial Hopf algebra extension of
$\kk[C_p]^*$ by $\kk[C_p]$, where $C_p$ denotes the (multiplicative) cyclic group of order $p$. Finally, we use the classification result \cite{WW14} to identify all the $p^2$-dimensional Hopf algebras with a normal Hopf subalgebra of dimension $p$. 

\subsection*{Acknowledgments} 
 Ng was partially supported by NSF grant
 DMS1664418 and  Wang was partially supported by Simons collaboration grant 
\#688403 and Air Force Office of Scientific Research grant 
FA9550-22-1-0272. This research began in the summer of 2022 during Wang's visit to the first author at Louisiana State University.

\section{Preliminaries and background}
Let $\kk$ be a ground field of arbitrary characteristic. We use $H$ to denote a finite-dimensional Hopf algebra over $\kk$ with counit $\epsilon: H\to \kk$, comultiplication $\Delta: H\to H\otimes H$ and antipode $S: H\to H$. The dual vector space  $H^*$ of $H$ is also a Hopf algebra with the structure induced from that of $H$. The \emph{augmentation ideal} of $H$ is defined to be $H^+:={\rm ker}\, \epsilon$. We use Sweedler's notation to write $\Delta(h)=\sum_{(h)} h_1\otimes h_2$ for all $h\in H$. When the context is clear, we may suppress $\sum_{(h)}$. The readers are referred to \cite{Mont93} and \cite{Sw69} for  more details on Hopf algebras and some fundamental results.

By an $H$-module, we always mean a left $H$-module of finite dimension over $\kk$. For any two $H$-modules $V$ and $W$, their tensor product $V\otimes W$ over the base field $\kk$ admits a natural action of $H$ via $\Delta$. The {\it left dual} $V^*$ of an $H$-module $V$ is the $\kk$-linear space  $V^*=\Hom_\kk(V,\kk)$ equipped with the $H$-module structure given by 
\[
(hf)(v)=f(S(h)v),\quad \text{for all $h\in H$, $f\in V^*$, $v\in V$}.
\]
Given an algebra automorphism $\sigma$ of $H$, one can twist the $H$-action of any $H$-module $V$ to obtain another $H$-module $\!_\sigma V$. More precisely, $\!_\sigma V=V$ as vector spaces with the $\sigma$-twisted action given by
\[
h\cdot_\sigma v=\sigma(h)v,\quad \text{for all $h\in H$, $v\in V$.}
\]
It is immediate to see that $\!_\sigma(\cdot)$ defines a $\kk$-linear equivalence on the category $H$-mod of left $H$-modules. Moreover, the natural isomorphism between $V$ and $V^{**}$ of finite-dimensional vector spaces defines an isomorphism  $\!_{S^2}V\cong V^{**}$ of $\kk$-linear monoidal functors.

We denote by $\irr(H)$ a complete set of nonisomorphic simple $H$-modules. For any $V\in \irr(H)$, the {\it projective cover} of $V$ is denoted by $P(V)$. Since $H$ is a Frobenius algebra \cite{LG}, we have (see \cite[61.13]{CR62})
\begin{equation}\label{dimH}
    \dim H=\sum_{V\in \irr(H)} \dim V\cdot \dim P(V).
\end{equation}
For any $H$-module $M$ and $V\in \text{Irr}(H)$, we define $[M:V]$ to be the multiplicity of $V$ appearing as a composition factor of $M$. Let $V,W\in \text{Irr}(H)$. Since $V^*\otimes P(W)$ is projective, it is a direct sum of indecomposable projective $H$-modules. More precisely,

\begin{align}\label{PM}
V^*\otimes P(W)= & \bigoplus_{U\in \mathrm{Irr}(H)} \dim \left(\Hom_H(V^*\otimes P(W),U)\right)\cdot P(U) \nonumber\\
=&\bigoplus_{U\in \mathrm{Irr}(H)} [V\otimes U:W]\cdot P(U).
\end{align}

Recall that a {\it group-like element} in a Hopf algebra $H$ is a nonzero element $g$ such that $\Delta(g)=g\otimes g$.  The group-like elements in $H$ form a subgroup of the group of units in $H$,  denoted by $G(H)$. Note that the set of 1-dimensional characters of $H$ coincides with the set of group-like elements of the dual Hopf algebra $H^*$, which means $G(H^*)=\Hom_{\rm Alg}(H,\kk)$. For any $\beta\in G(H^*)$, we denote by $\kk_\beta$ the corresponding $H$-module with the underlying $\kk$-linear space equal to $\kk$. In particular, the trivial $H$-module determined by the counit map $\epsilon: H\to \kk$ will simply be denoted by $\kk$.  Moreover, we define an injective group homomorphism $R: G(H^*)\to {\rm Aut}_{\rm Alg}(H)$ such that 
\begin{equation}\label{Rwinding}
R(\beta)(h):=h\leftharpoonup \beta=\sum_{(h)} \beta(h_1)\,h_2,\text{    for all $\beta\in G(H^*)$ and $h\in H$.}
\end{equation}
For any $H$-module $V$, one can check directly that 
\begin{equation}\label{eq:Twisted}
\kk_\beta \otimes V\cong \!_{R(\beta)} V\quad \text{as $H$-modules}.
\end{equation}
An element $x\in H$ is called {\it skew primitive} if $\Delta(x)=x\otimes g+h\otimes x$ for some group-like elements $g$ and $h$ in $H$. In this case, $x$ is also called a {\it $(g,h)$-primitive element}. An $(1,1)$-primitive element is simply called a {\it primitive element}. For any group-like elements $g,h$ of $H$, we use $P_{g,h}(H)$ to denote the subspace of all $(g,h)$-primitive elements in $H$. In particular, the subspace $P(H)=P_{1,1}(H)$ of primitive elements of $H$ is closed under the commutator bracket, and hence a Lie subalgebra of $H$. Moreover, if the base field $\kk$ has characteristic $p$, then $P(H)$ is a {\it $p$-restricted Lie algebra}, where the restricted map $[p]: P(H)\to P(H)$ is given by the $p$th power map in $H$ (see \cite{Jac}). 

The sum of simple subcoalgebras of a coalgebra $C$ is called the \emph{coradical} of $C$ and denoted by $C_0$. A Hopf algebra is called \emph{pointed} if $H_0 = \kk[G(H)]$, and called \emph{connected} if $H_0 = \kk 1_H$.  A finite-dimensional Hopf algebra $H$ is often called \emph{basic} if $H^*$ is pointed. In particular, $H$ is  \emph{local}, which means $H^+$ is the Jacobson radical of $H$, if and only if $H^*$ is connected. The endomorphism ring $\End_H(V)$ of any indecomposable $H$-module $V$ is a local ring (see \cite{AF92, P82}). 
The following trivial lemma is a reminder of these terminologies. 

\begin{lem}\cite[Lem. 2.2]{XT14}\label{lem:local}
The following statements concerning a finite-dimensional Hopf algebra $H$ with counit $\epsilon$ are equivalent. 
\begin{enumerate}[label=\rm{(\roman*)}]
\item $H$ is local;
\item ${\rm ker}\,\epsilon$ is a nilpotent ideal; 
\item  ${\rm ker}\,\epsilon$ is the Jacobson radical of $H$;
\item $H^*$ is connected.
\end{enumerate}
Moreover, $H$ is basic if, and only if, $H^*$ is pointed. 
\end{lem}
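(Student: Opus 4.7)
The plan is to establish $(i) \Leftrightarrow (iii) \Leftrightarrow (ii)$ using only the Artinian structure of $H$, and then to derive $(iii) \Leftrightarrow (iv)$ from the standard duality between the coradical of $H^*$ and the Jacobson radical of $H$.

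The equivalence $(i) \Leftrightarrow (iii)$ is immediate from the definition of ``local'' given in the paragraph preceding the lemma. For $(ii) \Leftrightarrow (iii)$, observe that $H/{\rm ker}\,\epsilon \cong \kk$ is a field, so ${\rm ker}\,\epsilon$ is a maximal two-sided ideal and hence contains the Jacobson radical $J(H)$. Since $H$ is finite-dimensional and therefore Artinian, $J(H)$ is itself nilpotent and contains every nilpotent ideal of $H$. Consequently $J(H) = {\rm ker}\,\epsilon$ precisely when ${\rm ker}\,\epsilon$ is nilpotent, giving $(ii) \Leftrightarrow (iii)$.

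For $(iii) \Leftrightarrow (iv)$, I would invoke the standard identification, valid for any finite-dimensional algebra $A$ with dual coalgebra $A^*$, that under the natural perfect pairing $A \otimes A^* \to \kk$ one has $(A^*)_0 = J(A)^\perp$. Specializing to $A = H$, and using the fact that $\kk\epsilon = ({\rm ker}\,\epsilon)^\perp$ (because $\epsilon$ vanishes on ${\rm ker}\,\epsilon$ and $\dim {\rm ker}\,\epsilon = \dim H - 1$), the connectedness condition $(H^*)_0 = \kk\epsilon$ becomes equivalent, after taking perps, to $J(H) = {\rm ker}\,\epsilon$. The ``moreover'' clause is just the definition of basic recalled immediately before the lemma.

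The only nontrivial step is the duality $(H^*)_0 = J(H)^\perp$; although completely standard (see, e.g., Sweedler's or Montgomery's treatment of the coradical filtration), one must quote it carefully and recall that simple subcoalgebras of $H^*$ correspond under $\perp$ to maximal semisimple quotients of $H$, not to simple $H$-modules. Note that no Hopf structure is used in the equivalences $(i)$--$(iv)$ beyond the existence of the counit $\epsilon$; the full Hopf structure enters only in the ``moreover'' clause via the definitions of pointed and basic.
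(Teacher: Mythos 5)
Your argument is correct: $(i)\Leftrightarrow(iii)$ and the ``moreover'' clause are definitional under the paper's stated conventions, $(ii)\Leftrightarrow(iii)$ follows from the Artinian facts you cite, and $(iii)\Leftrightarrow(iv)$ follows from the standard identification $(H^*)_0 = J(H)^\perp$ together with $(\ker\epsilon)^\perp = \kk\,\epsilon$. The paper gives no proof of this lemma (it is quoted from \cite{XT14}), and your route is the standard duality argument one would find there, so there is nothing substantive to compare.
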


Finally, we recall some basic notations on extensions of finite-dimensional Hopf algebras. In the sense of \cite[Definition 1.3]{Masuoka94}, a short exact sequence of finite-dimensional Hopf algebras $K$, $A$ and $H$, denoted by
\begin{equation}\label{E:SES}
1\to K\xrightarrow{\iota} A\xrightarrow{\pi} H\to 1,
\end{equation}
consists of  Hopf algebra homomorphisms $\iota$ and $\pi$ such that $\iota$ is injective, $\pi$ is surjective,  $\iota(K)$ is a normal Hopf subalgebra of $A$ and ${\rm ker}\,\pi=\iota(K^+)A$ (see \cite[Lem.-Definition 1.1]{Masuoka94}). In this case, we say that $A$ is an {\it extension} of $H$ by $K$.  

\begin{lem}\label{lem:SESlocal}
For a short exact sequence of finite-dimensional Hopf algebras described in \eqref{E:SES}, the following hold true. 
\begin{enumerate}[label=\rm{(\roman*)}]
    \item Suppose $K$ is local. Then $\irr(H) = \irr(A)$, and so $H$ is basic if and only if $A$ is basic.
    \item Suppose $H$ is connected. Then, $\irr(K^*) = \irr(A^*)$, and so $K$ is pointed if and only if $A$ is pointed.
    \end{enumerate}
    In particular, we have
    \begin{enumerate}
    \item[\rm (iii)] $A$ is local if and only if both $K$ and $H$ are local.
    \item[\rm (iv)] $A$ is connected if and only if both $K$ and $H$ are connected.
\end{enumerate}
\end{lem}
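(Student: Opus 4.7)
The plan is to prove (i) and (iii) directly from the preceding Lemma's characterization ``$H$ local $\Leftrightarrow H^+$ is nilpotent'', and then derive (ii) and (iv) by dualizing the short exact sequence $1\to K\xrightarrow{\iota} A\xrightarrow{\pi} H\to 1$ (which dualizes to $1\to H^*\to A^*\to K^*\to 1$).

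For (i), I first show that $\ker\pi$ is a nilpotent ideal of $A$ whenever $K$ is local. The identity $\ker\pi=\iota(K^+)A$ together with the normality of $\iota(K)$ in $A$ gives $A\,\iota(K^+)=\iota(K^+)\,A$ as two-sided ideals of $A$, so by iterating one obtains $(\iota(K^+)A)^n = \iota((K^+)^n)A$, which vanishes once $(K^+)^n=0$. Thus $\ker\pi\subseteq J(A)$, so every simple $A$-module descends along $\pi$ to a simple $H$-module, and conversely every simple $H$-module inflates to a simple $A$-module. Hence $\irr(A) = \irr(H)$. Since $H$ is basic exactly when every simple $H$-module is one-dimensional (and likewise for $A$), the equivalence of basicness is immediate. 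Part (ii) is obtained by applying (i) to the dualized short exact sequence: $H$ connected translates, via the preceding lemma, to $H^*$ local, so (i) produces $\irr(A^*)=\irr(K^*)$ and the equivalence ``$A^*$ basic $\Leftrightarrow K^*$ basic'', which is by definition ``$A$ pointed $\Leftrightarrow K$ pointed''.

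For (iii), I again use the ``augmentation ideal is nilpotent'' characterization. If $A$ is local, nilpotence of $A^+$ passes to $\iota(K^+)\subseteq A^+$ (giving $K$ local, since $\iota$ is injective) and to $H^+ = \pi(A^+)$ (giving $H$ local, since $\pi$ is a surjective algebra map). Conversely, if $K$ and $H$ are both local with $(K^+)^l=0$ and $(H^+)^m=0$, then $(A^+)^m\subseteq\ker\pi$, and the nilpotence estimate from (i) yields $(A^+)^{ml}\subseteq (\iota(K^+)A)^l = \iota((K^+)^l)A = 0$, so $A$ is local. Finally, (iv) follows from (iii) applied to the dualized sequence, using $H$ connected $\Leftrightarrow H^*$ local throughout.

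The only step requiring genuine care is the nilpotence estimate for $\iota(K^+)A$ in (i); everything else is formal bookkeeping with kernels, augmentations, and the pointed–local duality from the preceding lemma. The normality condition is precisely what allows $A$- and $\iota(K^+)$-factors to be reordered, and without it the iterated product would not collapse into $\iota((K^+)^n)A$. I anticipate this is also where most of the ``work'' of the lemma is concentrated.
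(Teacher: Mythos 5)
Your proposal is correct and follows essentially the same route as the paper: normality gives $\ker\pi=\iota(K^+)A=A\,\iota(K^+)$ nilpotent when $K$ is local, hence $\irr(A)=\irr(H)$ via $A/\ker\pi\cong H$, with (iii) by the same nilpotence bookkeeping and (ii), (iv) obtained by dualizing the exact sequence and invoking the local--connected duality of the preceding lemma. Your explicit iteration $(\iota(K^+)A)^n=\iota((K^+)^n)A$ merely spells out what the paper leaves implicit.
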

\begin{proof}
 Without loss of generality, we may assume  $\iota$ is just the inclusion map and so $K$ is 
 a normal Hopf subalgebra of $A$. 
 
 (i) Since $K$ is local, $K^+$ is nilpotent. The normality of $K$ in $A$ implies that $I:={\rm ker}\, \pi=K^+A=AK^+$ is also a nilpotent ideal of $A$. Therefore, simple $A$-modules are simple modules over $A/I\cong H$. Hence the result follows by definition. 

(iii) If $H$ and $K$ are local, then $A$ is local by the proof of (i). Now suppose $A$ is local. Then $A^+$ is nilpotent, and so is $K^+$. Since $H^+=\pi(A^+)$,  $H^+$ is also nilpotent.  So $K$ and $H$ are local by Lemma \ref{lem:local}. 

(ii) and (iv) follow directly from (i) and (iii) by duality.
\end{proof}

\section{Hopf algebras of dimension $p^n$ with $p^{n-1}$ group-like elements}
In this section, we discuss the pointedness of finite-dimensional Hopf algebras with large group-like elements. Though some of our results may apply to arbitrary characteristics, we mainly focus on the positive characteristic case. 

For any module $V$ over a finite-dimensional Hopf algebra $H$, we define 
$$
\stab(V)~=~\{\b \in G(H^*)\mid \kk_\b \o V \cong V \text{ as $H$-modules}\}.
$$ 
Obviously, $\stab(V)$ is a subgroup of $G(H^*)$.
The following result was established over a base field $\kk$ of characteristic zero in \cite[Prop. 2.5]{EG04} and \cite[Lem. 1.4]{Ng08}. We extend its proof to positive characteristics by applying group cohomology for subgroups of $\stab(V)$ for any simple $H$-module $V$. We first observe the following cohomology vanishing lemma. 
\begin{lem} \label{l:vanishing_coh}
    Let $\kk$ be an algebraically closed field of characteristic $p > 0 $, and  $G$ a $p$-group. For any finite-dimensional commutative semisimple algebra $A$ over $\kk$ on which $G$ acts as algebra automorphisms, $H^2(G, A^\times)$ is trivial.  
\end{lem}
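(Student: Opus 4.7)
The plan is to reduce to a computation of cohomology of a $p$-group with coefficients in a uniquely $p$-divisible abelian group, after decomposing $A$ according to the $G$-action on its primitive idempotents.

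First, since $A$ is a finite-dimensional commutative semisimple algebra over an algebraically closed field, the structure theorem gives $A\cong \kk^n$ as $\kk$-algebras, with primitive idempotents $e_1,\dots,e_n$ that form a $\kk$-basis. Any algebra automorphism of $A$ permutes these idempotents, so the $G$-action on $A$ restricts to a permutation action on $X=\{e_1,\dots,e_n\}$. Decomposing $X$ into $G$-orbits $X=\bigsqcup_i X_i$ gives a direct product decomposition $A=\prod_i A_i$ of $G$-stable subalgebras with $A_i=\kk^{X_i}$, and hence $A^{\times}=\prod_i A_i^{\times}$ as $\mathbb{Z}[G]$-modules. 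Thus it suffices to prove $H^2(G,A_i^{\times})=0$ for each orbit summand.

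Next, I fix a base point $e\in X_i$ with stabilizer $G_i\le G$. The identification $X_i\cong G/G_i$ realizes $A_i^\times=(\kk^\times)^{X_i}$ as the coinduced module $\operatorname{CoInd}_{G_i}^G \kk^\times$, where $G_i$ fixes the distinguished coordinate and therefore acts trivially on $\kk^\times$. Shapiro's lemma then yields $H^2(G,A_i^{\times})\cong H^2(G_i,\kk^{\times})$, with $\kk^\times$ carrying the trivial $G_i$-action. Since $G_i$ is a subgroup of the $p$-group $G$, it is itself a $p$-group, so the problem is reduced to showing $H^n(G_i,\kk^{\times})=0$ for $n\ge 1$.

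Finally, the vanishing is a standard divisibility argument. Because $\kk$ is algebraically closed, the polynomial $x^m-a$ has a root for every $a\in\kk^{\times}$ and every $m\ge 1$, so $\kk^{\times}$ is divisible; because $\operatorname{char}\kk=p$, the equation $x^p=1$ forces $(x-1)^p=0$ and hence $x=1$, so $\kk^{\times}$ has trivial $p$-torsion. Thus multiplication by $p$ is a bijection on $\kk^{\times}$, which implies by functoriality that it induces a bijection on $H^n(G_i,\kk^{\times})$ for every $n$. On the other hand, for $n\ge 1$ this cohomology is annihilated by $|G_i|$, which is a power of $p$. A group that is both uniquely $p$-divisible and annihilated by some power of $p$ must be zero, so $H^n(G_i,\kk^{\times})=0$. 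Combining with the previous step, $H^2(G,A^{\times})=\prod_i H^2(G_i,\kk^{\times})=0$, as required. No single step looks hard; the main thing to be careful about is the identification of the $G$-module $A_i^\times$ with the coinduced module so that Shapiro's lemma applies cleanly.
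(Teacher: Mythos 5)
Your proof is correct; every step checks out, including the identification of $A_i^\times$ with $\operatorname{CoInd}_{G_i}^G\kk^\times$ (the stabilizer does act trivially on the distinguished coordinate, since the $G$-action is $\kk$-linear and merely permutes the primitive idempotents) and the final divisibility argument. It is, however, organized differently from the paper's proof. The paper argues directly at the cochain level: for $\sigma\in Z^2(G,A^\times)$, the power $\sigma^{|G|}$ is a coboundary $\partial f$ by the standard annihilation-by-$|G|$ result, and since $|G|$ is a $p$-power and $\operatorname{char}\kk=p$, the equation $X^{|G|}=a$ has a \emph{unique} solution in the semisimple algebra $A$, so $f$ has a unique $|G|$-th root $\bar f$ and $\sigma/\partial\bar f$ is an $|G|$-torsion element of the (torsion-free at $p$) group of $A^\times$-valued cochains, forcing $\sigma=\partial\bar f$. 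You instead decompose $A^\times$ along $G$-orbits of idempotents, invoke Shapiro's lemma to reduce to $H^2(G_i,\kk^\times)$ with trivial coefficients, and conclude from unique $p$-divisibility of $\kk^\times$ together with annihilation by $|G_i|$. Both arguments rest on the same two pillars (cohomology killed by the group order, and bijectivity of the $p$-power map in characteristic $p$ over an algebraically closed field), but yours buys a finer statement, namely $H^2(G,A^\times)\cong\prod_i H^2(G_i,\kk^\times)$ and in fact vanishing of $H^n$ for all $n\ge 1$, at the cost of more machinery; the paper's version is more elementary and exhibits an explicit cobounding cochain. Note also that your Shapiro step could be bypassed: the $p$-power (Frobenius) map is already a $G$-module automorphism of $A^\times$ itself, so the unique-$p$-divisibility argument applies directly to $H^2(G,A^\times)$ without decomposing into orbits.
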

\begin{proof}
  For any $\sigma \in Z^2(G, A^\times)$, $\sigma^{|G|}$ is a coboundary by \cite[Thm. 6.5.8]{Wei94}, and so $\sigma^{|G|}=\partial f$ for some function $f: G \to A^\times$. Since $A$ is commutative semisimple, $A=\oplus_{i=1}^n \kk\, e_i$ where $\{e_1,\ldots,e_{n}\}$ is the complete set of primitive orthogonal idempotents for $A$. Note that for any $a \in A$, the polynomial equation $X^{|G|}-a = 0$ has a unique solution in $A$ since $\kk$ has characteristic $p$ and $G$ is a $p$-group. Therefore, there exists $\ol f : G \to A^\times$ such that $\ol f^{|G|}=f$. Thus, $\left(\sigma/\partial \ol f\right)^{|G|} = 1$ or $\sigma = \partial \ol f$. Hence, $H^2(G,A^\times)$ is trivial.  
\end{proof}
\begin{lem}\label{l:freeness}
Let $H$ be a finite-dimensional Hopf algebra over an algebraically closed field $\kk$, and $V$ a simple $H$-module. Suppose there is a subgroup $G \le \stab(V)$ such that $H^2(G, \kk^\times)$ is trivial. Then $V$ admits a Hopf module structure in ${_{\kk[G]}}\!\MM^{H^*}$, and hence is a free $\kk[G]$-module. In particular, if {\rm (i)} ${\rm char}(\kk) = 0$ and $G$ is a cyclic group , or {\rm (ii)} ${\rm char}(\kk) =p$ and $G$ is a $p$-group, then $|G|\mid \dim(V)$.
\end{lem}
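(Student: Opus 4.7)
The plan is to promote the family of isomorphisms $\{\kk_\beta \otimes V \cong V : \beta \in G\}$ to an honest left $\kk[G]$-module structure on $V$ via Schur's lemma and the vanishing of $H^2(G, \kk^\times)$, and then to invoke the freeness of relative Hopf modules over the sub-Hopf algebra $\kk[G] \subseteq H^*$.

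First, by \eqref{eq:Twisted}, for each $\beta \in G \le \stab(V)$ we have $V \cong {_{R(\beta)}}V$ as $H$-modules. Choose a $\kk$-linear automorphism $T_\beta \in \GL(V)$ realizing this, i.e., satisfying the intertwining identity $T_\beta \circ h = R(\beta)(h) \circ T_\beta$ in $\End(V)$ for all $h \in H$. Since $V$ is simple and $\kk$ is algebraically closed, Schur's lemma makes $T_\beta$ unique up to a nonzero scalar. For $\beta_1, \beta_2 \in G$, both $T_{\beta_1}T_{\beta_2}$ and $T_{\beta_1\beta_2}$ intertwine the $H$-action with its $R(\beta_1\beta_2)$-twist, so by Schur's lemma there is a unique $\sigma(\beta_1,\beta_2) \in \kk^\times$ with
\[
T_{\beta_1\beta_2} \;=\; \sigma(\beta_1, \beta_2)\, T_{\beta_1}T_{\beta_2},
\]
and the associativity of composition in $\End(V)$ forces $\sigma \in Z^2(G, \kk^\times)$ with trivial $G$-action.

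By hypothesis $H^2(G, \kk^\times) = 0$, so $\sigma$ is a coboundary, and after rescaling each $T_\beta$ by a suitable scalar one obtains operators $T'_\beta$ with $T'_{\beta_1 \beta_2} = T'_{\beta_1} T'_{\beta_2}$ and $T'_e = \id_V$. Setting $\beta \cdot v := T'_\beta(v)$ then turns $V$ into a left $\kk[G]$-module. Since each $T'_\beta$ still satisfies the same intertwining identity with $R(\beta)$, a direct computation, using that $\beta \in G$ is group-like in $H^*$ and translating between the left $H$-action on $V$ and the corresponding right $H^*$-coaction, shows that this $\kk[G]$-action is compatible with the $H^*$-comodule structure in the sense required for a relative Hopf module. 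Hence $V$ is naturally an object of ${_{\kk[G]}}\!\MM^{H^*}$, and by the fundamental theorem of (relative) Hopf modules for the sub-Hopf algebra $\kk[G]\subseteq H^*$, $V$ is free as a left $\kk[G]$-module; in particular $|G|\mid \dim V$.

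For the two concluding cases, it remains only to verify $H^2(G,\kk^\times) = 0$: in case (i), $\kk^\times$ is divisible and $G$ is cyclic, so $H^2(G, \kk^\times) = \kk^\times/(\kk^\times)^{|G|} = 0$; in case (ii), Lemma \ref{l:vanishing_coh} applied with $A = \kk$ gives $H^2(G, \kk^\times) = 0$. In either situation the freeness conclusion of the previous paragraph applies. The principal technical point of the argument is to verify carefully that the rescaled intertwiners $T'_\beta$ satisfy the Hopf-module compatibility axiom in ${_{\kk[G]}}\!\MM^{H^*}$; once the conventions for the winding automorphism $R(\beta)$ are tracked correctly, the cocycle argument and the final freeness step are routine.
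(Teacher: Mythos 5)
Your proposal is correct and follows essentially the same route as the paper: choose Schur-normalized intertwiners realizing $\kk_\beta\otimes V\cong V$, extract a $2$-cocycle, kill it using the vanishing of $H^2(G,\kk^\times)$ (Lemma \ref{l:vanishing_coh} in the $p$-group case, divisibility of $\kk^\times$ for cyclic $G$ in characteristic $0$), verify the compatibility making $V$ an object of ${_{\kk[G]}}\!\MM^{H^*}$, and conclude freeness — the paper's citation for this last step is the Nichols--Zoeller theorem, which is what your ``fundamental theorem of relative Hopf modules over the sub-Hopf algebra $\kk[G]\subseteq H^*$'' amounts to. The compatibility computation you defer is exactly the one the paper writes out, using the intertwining property of the rescaled operators together with the definition of $R(\beta)$.
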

\begin{proof}
In view of \eqref{eq:Twisted},  we have $H$-module isomorphisms 
$$
{_{R(\b)}}\!V\cong \kk_\b \o V \cong V \quad \text{for } \b \in G\,.
$$
Let $F_\b: {_{R(\b)}}\!V \to V$ be such an isomorphism of $H$-modules for each $\b \in G$, which are not canonical. Since $R(\e) = \id_H$, we choose $F_\e = \id_V$.  Note that $F_\b \in \GL(V)$ and ${_{R(\b)}}\! ({_{R(\b')}}\!V) = {_{R(\b\b')}}\!V $ (or  $\kk_\b \o \kk_{\b'} \cong  \kk_{\b\b'}$) for any $\b, \b' \in G$. We find $F_{\b}\circ F_{\b'}$ and $F_{\b\b'} :{_{R(\b\b')}}\!V \to V$ are $H$-module isomorphisms. By Schur's lemma, 
$$
F_{\b}\circ F_{\b'} = \g(\b, \b') F_{\b\b'}
$$
for some  $\g(\b, \b') \in \kk^\times$.
The associativity of composition 
\[
(F_{\b}\circ F_{\b'}) \circ F_{\b''}=F_{\b}\circ (F_{\b'}\circ F_{\b''}),
\]
implies $\g(\b',\b'')\g(\b,\b'\b'')=\g(\b\b',\b'')\g(\b,\b')$ for any $\b,\b',\b''\in G$. Since $F_\e = \id_V$,  $\g(\b,\e)=\g(\e, \b)=1$.  Therefore, $\g$ defines a normalized 2-cocycle in $Z^2(G, \kk^\times)$. Since $H^2(G, \kk^\times)$ is trivial,  $\g$ is a coboundary, namely, there exists a map $f: G \to \kk^\times$ such that $\g(\b, \b') = f(\b)f(\b')/f(\b\b')$ with $f(\e)=1$ for all $\b, \b' \in G$. Now we define the $\kk$-linear map $\eta: \kk[G]\to\End_\kk(V)$ by
\begin{align*}
\eta(\b)=F_\b/f(\b)\quad \text{for any}\ \b\in G.
\end{align*}
By the definitions of $\gamma$ and $f$, it is straightforward to check that $\eta$ is indeed an $\kk$-algebra homomorphism. Therefore,  $V$ is a left $\kk[G]$-module via $\eta$, i.e., $\b v = \eta(\b)(v)$ for any $v\in V$ and $\beta\in G$.

Next, the left $H$-action on $V$ defines a right $H^*$-coaction $\rho$ on $V$, namely $h v = \sum_{(v)} v_1(h)v_0$ for $h \in H$ and $v \in V$, where $\rho(v) =  \sum_{(v)} v_0\otimes v_1\in V\otimes H^*$. This makes $V$ simultaneously a left $\kk[G]$-module and a right $H^*$-comodule. We claim that $V$ is a Hopf module in ${_{\kk[G]}}\!\MM^{H^*}$, and hence by the Nichols-Zoeller Theorem (cf. \cite{NZ}), $V$ is a free $\kk[G]$-module. It suffices to show that
$$
\sum_{(\b v)}(\b v)_0 \o (\b v)_1 = \sum_{(v)}\b v_0 \o \b v_1 \quad \text{or}\quad \sum_{(\b v)}  (\b v)_1(h) (\b v)_0 = \sum_{(v)} (\b v_1)(h) \b v_0
$$
for all $\b \in G$, $v \in V$ and $h \in H$. Note that $\eta(\b)$ is an $H$-module isomorphism from $\!_{R(\b)}V$ to $V$. So our claim follows from this calculation: 
\begin{align*}
\sum_{(\b v)} (\b v)_1(h)(\b v)_0&=h(\b v)=\b\left(R(\b)(h)v\right)=\b\left(\sum_{(v)} v_1(R(\b)(h))v_0\right)\\
&=\b\left(\sum_{(v)} v_1\left(\sum_{(h)}\beta(h_1)h_2\right)v_0\right)=\sum_{(v)}\left(\sum_{(h)}\b(h_1)v_1(h_2)\right)\b v_0\\
&=\sum_{(v)} (\b v_1)(h)(\b v_0).
    \end{align*}

If $G$ is a cyclic group and ${\rm char}(\kk)=0$, $H^2(G, \kk^\times)$ is well-known to be trivial (cf.  \cite[Thm. 6.2.2]{Wei94}). If ${\rm char}(\kk)= p$ and $G$ is a $p$-group, then $H^2(G, \kk^\times)$ is trivial by Lemma \ref{l:vanishing_coh}. Now, the final assertion follows directly from the first statement.
\end{proof}

Our next lemma is derived from the proof of \cite[Lem. 1.3]{NW19}.

\begin{lem} \label{l:dim}
Let $\kk$ be an algebraically closed field of characteristic $p>0$, $H$ a nonsemisimple Hopf algebra of dimension $p^n$ over $\kk$ for some integer $n \ge 1$, and $P$ an indecomposable projective $H$-module. Suppose  $P \cong \kk_\b\o P^{**}$ for some $\b \in G(H^*)$. Then, 
\begin{enumerate}[label=\rm{(\roman*)}]
    \item $\dim P$ is a positive even integer if $p=2$;
    \item $\dim P \ge p$ if $\dim P$ and $p$ are odd.
\end{enumerate}
\end{lem}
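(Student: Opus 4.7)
The plan is to adapt the Hopf-module freeness method of Lemma~\ref{l:freeness} from simple modules to the indecomposable projective $P$. Set $\sigma := S^2\circ R(\beta)$, an algebra automorphism of $H$; note that $S^2$ and $R(\beta)$ commute because $\beta = \beta\circ S^2$ (the antipode square of $H^*$ fixes group-likes). Combining \eqref{eq:Twisted} with $V^{**}\cong {_{S^2}}V$, the hypothesis rewrites as $P\cong {_\sigma}P$. I first pick an $H$-linear isomorphism $F\colon {_\sigma}P\to P$, viewed as $F\in\GL(P)$ satisfying $F\rho(\sigma(h)) = \rho(h)F$ for every $h\in H$, where $\rho$ denotes the $H$-action on $P$.

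The next step bounds the order of $F$. Let $N$ be the order of $\sigma$ in ${\rm Aut}_{\rm Alg}(H)$. Since $|G(H^*)|$ divides $p^n$, the order of $R(\beta)$ is a $p$-power; and by Radford's $S^4$-formula together with $|G(H)|,|G(H^*)|$ being $p$-powers, $S^4$ has $p$-power order, so $S^2$ has order a $p$-power when $p=2$, and a $p$-power or twice a $p$-power when $p$ is odd. The same holds for $N$. The power $F^N$ is $H$-linear, hence lies in $\End_H(P)^\times$, the units of the local $\kk$-algebra $\End_H(P)$ with residue field $\kk$. Writing $F^N\equiv c\cdot\id_P\pmod{J(\End_H(P))}$ with $c\in\kk^\times$, and rescaling $F\mapsto \lambda^{-1}F$ for $\lambda\in\kk^\times$ with $\lambda^N=c$, I may assume $F^N\in 1+J(\End_H(P))$. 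In characteristic $p$, every element of $1+J$ has $p$-power order (since $(1+x)^{p^k}=1+x^{p^k}$ for nilpotent $x$), so $F$ has finite order $M$ in $\GL(P)$ dividing $Np^k$ for some $k\ge 0$, of the same arithmetic shape as $N$.

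Finally I would promote the cyclic group $G := \langle F\rangle$ of order $M$, acting on $P$ via $F$, to a Hopf module structure on $P$ in ${_{\kk[G]}}\!\MM^{H^*}$ with the native $H^*$-coaction, in parallel with the calculation at the end of Lemma~\ref{l:freeness}, using the intertwining $F\rho(\sigma(h))=\rho(h)F$ in the role of the $R(\beta)$-twist formula there. By the Nichols-Zoeller theorem, $P$ is then free over $\kk[G]$, so $M\mid\dim P$. Combined with the arithmetic shape of $M$: when $p=2$, $M$ is a positive $2$-power, and $M\ge 2$ yields $2\mid\dim P$; when $p$ is odd and $\dim P$ is odd, $M\mid\dim P$ forces $M$ to be odd and hence a $p$-power, so $M\ge p$ gives $\dim P\ge p$.

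The main obstacle I expect is the explicit Hopf-module compatibility in the last step: because $\sigma=S^2\circ R(\beta)$ is not a Hopf algebra automorphism ($\epsilon\circ\sigma=\beta\ne\epsilon$ once $\beta\ne\epsilon$), the computation in Lemma~\ref{l:freeness} does not transfer verbatim, and one may need to decompose the twist into its $S^2$ and $R(\beta)$ pieces, or to enlarge the acting group on $P$ so that the combined symmetry becomes coaction-compatible. A secondary issue is ruling out the degenerate case $M=1$, which should follow from the nonsemisimplicity of $H$ precluding $\dim P=1$ under the hypothesis $P\cong\kk_\beta\otimes P^{**}$.
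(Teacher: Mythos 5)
Your approach has a genuine gap that I do not think can be repaired along these lines: the strategy only ever yields a divisibility $M \mid \dim P$, where $M$ is the order of the intertwiner $F$, and nothing in the hypotheses prevents $M=1$. Indeed, take $\b=\e$ and $S^2=\id_H$ (this happens for nonsemisimple Hopf algebras in characteristic $p$, e.g.\ $H=\kk[x]/(x^p)$ with $x$ primitive); then $\sigma=\id_H$ and $F=\id_P$ is an admissible choice, so $M=1$ and your conclusion is vacuous. Your closing remark does not rescue this: $M=1$ does not force $\dim P=1$, it merely empties the divisibility statement, while $\dim P\ge p$ remains a nontrivial assertion in that case. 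The missing ingredient is precisely where nonsemisimplicity must enter. The paper shows that any $H$-module isomorphism $\phi\colon P\to{}_\sigma P$ satisfies $\tr(\phi)=0$, quoting \cite[Lem.~1.1]{NW19} when $\b$ is trivial and \cite[Lem.~1.2]{Ng08} when $\b$ is nontrivial (both resting on Radford--Schneider-type trace identities for projectives over nonsemisimple Hopf algebras). After rescaling so that $\phi^2$ is unipotent, the eigenvalues of $\phi$ are $\pm1$, and $0=\tr(\phi)=d_+-d_-$ in $\kk$ gives $p\mid(d_+-d_-)$; oddness of $\dim P=d_++d_-$ forces $d_+\ne d_-$, whence $\dim P\ge|d_+-d_-|\ge p$. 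Note that when $S^2=\id$ and $\b=\e$ this argument even gives $p\mid\dim P$, a conclusion your method cannot see at all.

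A second, independent gap is the one you flag yourself but do not close. In Lemma~\ref{l:freeness} the acting group $G$ lies inside $G(H^*)$, so $\kk[G]$ is a Hopf subalgebra of $H^*$ and the compatibility $\rho(\b v)=\sum \b v_0\o \b v_1$ is literally the Hopf-module axiom for ${}_{\kk[G]}\MM^{H^*}$, with $\b v_1$ a product in $H^*$. Your group $\langle F\rangle$ is abstract: the $S^2$-component of $\sigma$ is not a winding automorphism $R(\a)$ for any $\a\in G(H^*)$ in general, so there is no Hopf algebra map $\kk[\langle F\rangle]\to H^*$ and no candidate coaction compatibility, and the Nichols--Zoeller theorem does not apply. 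So even the divisibility $M\mid\dim P$ is not established by the proposal.
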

\begin{proof}  
Note that $\kk_\b\o P^{**}\cong \!_\sigma P$, where $\sigma=R(\b) \circ S^2 = S^2 \circ R(\b)$. So there exists an $H$-module isomorphism $\phi: P \to \!_\sigma P$. By the celebrated formula of Radford \cite{Rad76}, $S^{4p^n}=\id_H$. Hence $\sigma^{2p^n}=R(\b)^{2p^n}\circ S^{4p^n}=R(\b^{2p^n})\circ S^{4p^n}=\id_H$. Thus, $\phi^{2p^n}$ is an $H$-module automorphism of $P$. Since $P$ is indecomposable, $\End_H(P)$ is a local $\kk$-algebra and so $\phi^{2 p^n} - c \id_P$ is nilpotent for some $c \in \kk^\times$. By rescaling, we can assume that both $\phi^{2 p^n}$ and $\phi^2$ are unipotent. Thus, the eigenvalues of $\phi$ are $\pm 1$ if ${\rm char}(\kk)=p> 2$, and are $1$ if $p=2$. 

We claim $\tr(\phi)=0$. If $\b$ is trivial, it follows from \cite[Lem. 1.1]{NW19}. If $\b$ is nontrivial, it follows from \cite[Lem. 1.2]{Ng08}. Thus, if $p=2$, $\tr(\phi)=0$ implies $\dim P$ is a positive multiple of 2. 

Now, we assume $p > 2$ and $\dim P$ is odd. Let $d_\pm$ be the multiplicities of the eigenvalues $\pm 1$ for $\phi$, respectively. Since 
$$
0 = \tr(\phi)=d_+ - d_-,
$$
$|d_+ - d_-| = mp$ for some nonnegative integer $m$. If $m =0$, then  $\dim(P) = 2d_+$, which is even. Since $\dim(P)$ is odd, $m \ge 1$ and hence $\dim(P) = d_+ + d_- \ge p$.
\end{proof}

\begin{thm}\label{thm:basic}
Let $\kk$ be an algebraically closed field of characteristic $p > 0$ and $H$ a nonsemisimple Hopf algebra of dimension $p^n$ over $\kk$ for some integer $n \ge 1$. If $|G(H^*)|=p^{n-1}$, then $H^*$ is pointed. 
\end{thm}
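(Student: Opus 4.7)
The plan is to show every simple $H$-module is $1$-dimensional; by Lemma~\ref{lem:local} applied to $H^*$, together with the correspondence between simple $H$-modules and simple subcoalgebras of $H^*$, this is equivalent to $H^*$ being pointed. Set $d = \dim P(\kk)$, which satisfies $d \ge 2$ because $H$ is nonsemisimple. The $1$-dimensional simples of $H$ are exactly $\{\kk_\beta : \beta \in G(H^*)\}$, and by \eqref{eq:Twisted} each has projective cover of dimension $d$, contributing $p^{n-1} d$ to $\dim H = p^n$. Group the remaining simples into $G(H^*)$-orbits under $V \mapsto \kk_\beta \o V$ with representatives $V_1,\dots,V_k$. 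Since $G(H^*)$ has order $p^{n-1}$, each $|\stab(V_i)| = p^{s_i}$, and Lemma~\ref{l:freeness} gives $p^{s_i} \mid \dim V_i$; write $\dim V_i = p^{s_i} m_i$ with $m_i \ge 1$. Then \eqref{dimH}, after dividing by $p^{n-1}$, becomes
\[
p \;=\; d \;+\; \sum_{i=1}^{k} m_i \dim P(V_i).
\]

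If any $s_i \ge 1$, the corresponding term satisfies $m_i \dim P(V_i) \ge m_i \dim V_i = p^{s_i} m_i^2 \ge p > p-d$, exceeding the total sum, which is impossible. Hence $s_i=0$ for every $i$, each non-trivial $G(H^*)$-orbit is free of size $p^{n-1}$, and $\dim V_i = m_i \ge 2$. Next apply Lemma~\ref{l:dim} to $P(\kk)$; its hypothesis $P \cong \kk_\beta \o P^{**}$ holds trivially with $\beta = \epsilon$ since $\kk^{**}=\kk$. The lemma then forces $d$ to be even when $p=2$, and for $p$ odd either $d$ even or $d \ge p$. Combined with $d \le p$, the cases $p=2$ (so $d=2$) and $p$ odd with $d=p$ both give $k=0$ and finish the argument.

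It remains to rule out $p$ odd, $d$ even, $2 \le d \le p-1$, $k \ge 1$, in which case $p-d$ is odd. The plan is to apply Lemma~\ref{l:dim} to each $P(V_i)$: this requires verifying $P(V_i) \cong \kk_\beta \o P(V_i)^{**}$, equivalently that $V_i^{**}$ lies in the $G(H^*)$-orbit of $V_i$, which I expect to follow from Radford's $S^4$-formula combined with the free-orbit structure just established. Granted this, each $\dim P(V_i)$ is either even or at least $p$; in the latter case $m_i \dim P(V_i) \ge p > p-d$ already exceeds the total sum, so all $\dim P(V_i)$ must be even, every term $m_i \dim P(V_i)$ is even, and the sum $p-d$ is forced to be even, contradicting its odd parity. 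The main obstacle is exactly the verification of the hypothesis of Lemma~\ref{l:dim} for each $P(V_i)$; once secured, the theorem reduces to the clean parity/divisibility interplay between Lemmas~\ref{l:freeness} and~\ref{l:dim}.
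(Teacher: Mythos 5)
Your counting skeleton is sound and is a genuinely leaner organization than the paper's: the reduction of \eqref{dimH} to $p=d+\sum_i m_i\dim P(V_i)$ via Lemma~\ref{l:freeness} (forcing free $G(H^*)$-orbits), the trivial case $p=2$ (which the paper instead treats by a separate Hopf-module argument on subcoalgebras of $H^*$), and the elimination of odd $d<p$ via Lemma~\ref{l:dim} are all correct. The problem is the step you yourself flag: you need $P(V_i)\cong \kk_\beta\o P(V_i)^{**}$, i.e.\ $V_i^{**}$ in the $G(H^*)$-orbit of $V_i$, for \emph{every} nontrivial simple, and your proposed source for this --- Radford's $S^4$-formula plus freeness of the orbits --- does not deliver it. Radford's formula only controls $S^4$, i.e.\ the \emph{square} of the double-dual functor $D(V)=V^{**}$, and even there it yields $D^2(V)\cong \kk_a\o V\o \kk_{a^{-1}}$ for the distinguished $a\in G(H^*)$: a two-sided twist, which does not place $V^{**}$ in the left $G(H^*)$-orbit of $V$. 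Indeed the paper never proves your claim in this generality; its entire $\langle S^2\rangle$-orbit analysis exists precisely to manufacture \emph{one} simple $V_1$ (in a single, carefully chosen $S^2$-orbit, where a counting bound forces $\ell=1$, i.e.\ transitivity of the $G$-action) for which $V_1^{**}\cong\kk_\b\o V_1$ holds, and then derives the contradiction from Lemma~\ref{l:dim} for that $V_1$ alone. So as written there is a genuine gap at the decisive point.

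The gap is repairable within your framework, but it needs an argument of the same flavor as the paper's. Since $(\kk_\b\o V)^{**}\cong\kk_\b\o V^{**}$, the functor $D$ induces a permutation $\ol D$ of the $k$ nontrivial $G(H^*)$-orbits, and the quantity $m_i\dim P(V_i)$ is constant along each $\ol D$-cycle (duals preserve $\dim V$ and $\dim P(V)$). From $S^{4p^n}=\id$ one gets $\ol D^{\,2p^n}=\id$, so every cycle length divides $2p^n$; since each term is at least $4$, $k\le (p-d)/4<p$, so any odd cycle length, being an odd divisor of $2p^n$ smaller than $p$, equals $1$. On fixed orbits your hypothesis does hold and Lemma~\ref{l:dim} (after excluding $\dim P(V_i)\ge p$, which would overshoot $p-d$) makes the term even; cycles of length $\ge 2$ have even length and contribute an even subtotal because the term is constant along the cycle. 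Hence $\sum_i m_i\dim P(V_i)$ is even, contradicting the odd parity of $p-d$. With this (or some equivalent substitute, such as the paper's own $\langle S^2\rangle$-orbit count) inserted, your proof closes; without it, the final parity step is unsupported.
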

\begin{proof}
We first consider $p=2$. Suppose $H^*$ is not pointed. Then the sum of all the simple subcoalgebras of $H^*$ with dimensions $> 1$ is a nontrivial subcoalgebra $C$ of $H^*$. Note that $C$ is a free $\kk[G(H^*)]$-module since it is a ${}_{\kk[G(H^*)]}\MM^{H^*}$ Hopf module.  Thus, 
$$
\dim (H_0^*) =  |G(H^*)| +\dim C \ge  2|G(H^*)|=2^n = \dim H^*\,.
$$
Hence $H^*$ is cosemisimple, or $H$ is semisimple.  This proves the statement for $p=2$.

Now, we assume $p >2$. By Lemma \ref{lem:local}, it suffices to show that $\irr(H)=\{\kk_\beta\,|\, \beta\in G(H^*)\}$. It is easy to see that $P(\kk_\b\o V)\cong \kk_\b \o P(V)$ and $P(V^{**})\cong P(V)^{**}$ for any $V\in \irr(H)$. In particular, $\dim P(\kk)=\dim P(\kk_\b)$ for all $\b \in G(H^*)$. 

If $\dim P(\kk)$ is odd, then  $\dim P(\kk) \ge p$ by Lemma \ref{l:dim}. So \eqref{dimH} implies that 
$$
p^n = \dim H \ge \sum_{\b \in G(H^*)} \dim \kk_\beta\cdot \dim P(\kk_\b) \ge p^{n-1} p \,.
$$
Hence, $\dim P(\kk)=p$ and $\irr(H)=\{\kk_\beta\,|\, \beta\in G(H^*)\}$.  

To complete the proof of the theorem, it suffices to show that $\dim P(\kk)$ must be odd.  Let  $K=\langle S^2\rangle\subset {\rm Aut}_\kk(H)$. Since $S^{4p^n}=\id_H$, we have $|K|$ divides $2p^n$. For $\beta \in G(H^*)$, $R(\beta)$ is an algebra automorphism of $H$ via \eqref{Rwinding}. Thus, both $G(H^*)$ and $K$ act on $\irr(H)$ by twisting the $H$-actions on irreducible modules through their associated algebra automorphisms of $H$.

Since $R(\beta)\circ S^2=S^2\circ R(\beta)$ for any $\beta\in G(H^*)$, the above two actions commute with each other. Thus,   the set of $K$-orbits in $\irr(H)$ admits a $G(H^*)$-action induced by its action of $\irr(H)$.  Let $\mathcal O_1,\ldots,\mathcal O_r$ be the $K$-orbits, and  $V_1,\ldots,V_r$ be representative irreducible $H$-modules in these $K$-orbits, respectively. Then \eqref{dimH} implies that
\begin{equation}\label{them2.5H}
    p^n=\dim H=\sum_{i=1}^r |\mathcal O_i|\,\dim(V_i)\,\dim P(V_i).
\end{equation}
Since $\dim H$ is odd, there exists some index $i_0$ such that $|\mathcal O_{i_0}|$, $\dim V_{i_0}$ and $\dim P(V_{i_0})$ are all odd. Without loss of generality, we may assume $i_0=1$.

If we could show that $\dim V_{1}=1$, then $V_{1} \cong \kk_\b$ for some $\b\in G(H^*)$, and hence $\dim P(\kk)= \dim P(\kk_\b)$ is odd. Therefore, it suffices to show that $\dim V_1 =1$.

 Since $|\mathcal O_1|$ is odd and $|\OO_1|\mid |K|\mid 2p^n$,  $|\mathcal O_1|=p^k$ for some nonnegative integer $k \le n$. Let  
 $$
 G = \{\b \in G(H^*)\mid \kk_\b \o V_1 \in \OO_1 \}
 $$
and $G_1$ be the stabilizer of $V_1$. Obviously, $G_1 \subseteq G$, and   $|G_1| \mid \dim(V_1)$ by Lemma \ref{l:freeness}.
Since the $K$-action and the $G(H^*)$-action on $\irr(H)$ commute,  $\OO_1$ is closed under the action of $G$ and $K$ acts transitively on the set of $G$-obits in $\OO_1$. Consequently, all the $G$-orbits in $\OO_1$ have the same size $\frac{|G|}{|G_1|}$ and we have
\begin{equation}\label{them2.5L}
\ell \cdot \frac{|G|}{|G_1|}=|\mathcal O_1|=p^k,
\end{equation}
where $\ell$ is the number of $G$-orbits in $\mathcal O_1$. In particular, $\ell$ is a power of $p$. 

Now suppose $\dim V_{1}>1$. Let $\{\b \OO_1 \mid \b \in G(H^*)\}=\{\mathcal O_1,\ldots,\mathcal O_t\}$ for some nonnegative integer $t\leq r$ by relabelling the indices of the $K$-orbits. Then \eqref{them2.5H} and \eqref{them2.5L} imply that 
\begin{align*}
   p^n  = \dim(H) &\ge \sum_{\b  \in G(H^*)} \dim(\kk_\beta)\, \dim P(\kk_\b) + \sum_{1\le i\le t} |\mathcal O_i|\,\dim (V_i)\,\dim P(V_i) \\
   & = |G(H^*)| \dim P(\kk)+\frac{|G(H^*)|}{|G|} |\mathcal O_1|\,\dim (V_1)\dim P(V_1)\\
   &=p^{n-1}\left(\dim P(\kk)+\ell\,\dim (P(V_1))\, \frac{\dim (V_1)}{|G_1|}\right).
\end{align*}
Therefore, 
\begin{equation} \label{eq:last_ineq}
    p=\dim P(\kk)+\ell\,\dim (P(V_1))\, \frac{\dim (V_1)}{|G_1|} > \ell \dim P(V_1)
\end{equation}
and hence $\ell < p$. It follows from 
\eqref{them2.5L} that $\ell=1$, or $G$ acts transitively on $\OO_1$. Since both $V_1$ and $V_1^{**}$ are in the $K$-orbit $\mathcal O_1$, there exists $\b\in G$ such that $V_1^{**} \cong \kk_\b \o V_1$. Hence $\kk_{\b^{-1}} \o P(V_1)^{**} \cong P(V_1)$. It  follows from Lemma \ref{l:dim} that $\dim P(V_1) \ge p$, which contradicts \eqref{eq:last_ineq}. Therefore, $\dim V_1 = 1$.
\end{proof}

The dual statement of the preceding theorem is stated as follows.
\begin{thm}\label{thm:lageG}
Let $\kk$ be  an algebraically closed field of characteristic $p >0$ and $H$ a non-cosemisimple Hopf algebra of dimension $p^n$ over $\kk$ for some integer $n \ge 1$. If $|G(H)|=p^{n-1}$, then $H$ is pointed. 
\end{thm}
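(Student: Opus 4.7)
The plan is to deduce this theorem directly from Theorem \ref{thm:basic} by Hopf-algebra duality. Since $H$ is finite-dimensional, $(H^*)^* \cong H$ canonically as Hopf algebras, so the group of group-like elements of the dual of $H^*$ is precisely $G(H)$, and semisimplicity of $H^*$ corresponds to cosemisimplicity of $H$.

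More explicitly, I would set $H' := H^*$. Then $\dim H' = p^n$, the hypothesis that $H$ is non-cosemisimple translates into $H'$ being nonsemisimple, and the hypothesis $|G(H)| = p^{n-1}$ becomes $|G((H')^*)| = p^{n-1}$. Theorem \ref{thm:basic} applied to $H'$ then asserts that $(H')^*$ is pointed. Finally, $(H')^* = (H^*)^* \cong H$ yields that $H$ is pointed, as required.

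Since Theorem \ref{thm:basic} has already been established, there is essentially no obstacle here: the argument is a one-line dualization and the only content is confirming that each hypothesis and conclusion of Theorem \ref{thm:basic} dualizes correctly (non-cosemisimplicity of $H$ corresponds to non-semisimplicity of $H^*$, and pointedness of $H$ corresponds to pointedness of $(H^*)^*$, which is what the theorem delivers).
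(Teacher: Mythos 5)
Your proposal is correct and matches the paper's intent exactly: the paper states this theorem as ``the dual statement'' of Theorem \ref{thm:basic} with no further argument, and your dualization via $H':=H^*$ together with $(H^*)^*\cong H$ is precisely that argument spelled out.
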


\section{Finite-dimensional Hopf algebras with primitive elements}
In this section, we turn our attention to the existence of primitive elements in finite-dimensional Hopf algebras over an algebraically closed field $\kk$. It is well-known that finite-dimensional Hopf algebras in characteristic zero do not possess nonzero primitive elements \cite[Prop. 1(b)]{Stefan97}. One may assume, a priori, all results about primitive elements are asserted over positive characteristics.   

We first observe the following lemma regarding extensions of simple $H$-modules, and we simply denote $H^+$ by $\fm$.  

\begin{lem}\label{lem:ext}
Let $H$ be a finite-dimensional Hopf algebra over an algebraically closed field $\kk$, and $L$ any simple $H$-module. Denote by $\mathfrak n=Ann(L)$ the annihilator ideal of $L$ in $H$. Then we have
\[
\dim L \cdot \dim \Ext_H^1(L,\kk)~=~\dim\left(\mathfrak m\cap \mathfrak n/\mathfrak m\mathfrak n\right).
\]
\end{lem}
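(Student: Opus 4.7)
The plan is to apply $\Hom_H(-,\kk)$ to the short exact sequence
\[
0 \to \n \to H \to H/\n \to 0
\]
of left $H$-modules, exploiting that $H$ is self-injective (Frobenius) to kill $\Ext^1_H(H,-)$, and then convert the resulting dimension identity into the stated formula via a subquotient computation.

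Two key identifications are needed. First, $H/\n \cong L^{\oplus d}$ as a left $H$-module, where $d = \dim L$: by Jacobson density, $H/\n \cong \End_\kk(L) \cong M_d(\kk)$ as algebras, and $M_d(\kk)$ decomposes as $d$ copies of its unique simple left module $L$. Second, $\Hom_H(\n,\kk) \cong (\n/\fm\n)^*$: any $H$-linear map $\n \to \kk$ kills $\fm\n$ (since $\fm$ acts trivially on $\kk$), and, using $H = \kk\cdot 1 \oplus \fm$, any $\kk$-linear map $\n/\fm\n \to \kk$ lifts uniquely to an $H$-linear map $\n \to \kk$.

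With these in hand, $\Ext^1_H(H,\kk)=0$ collapses the long exact sequence to
\[
0 \to \Hom_H(L^{\oplus d},\kk) \to \Hom_H(H,\kk) \to (\n/\fm\n)^* \to \Ext^1_H(L^{\oplus d},\kk) \to 0.
\]
Setting $\delta := \dim \Hom_H(L,\kk) \in \{0,1\}$, which equals $1$ iff $L \cong \kk$, the alternating sum of dimensions yields
\[
d \cdot \dim \Ext^1_H(L,\kk) = \dim(\n/\fm\n) - 1 + d\delta.
\]
To finish, I would use the exact sequence
\[
0 \to (\fm\cap\n)/\fm\n \to \n/\fm\n \to \n/(\fm\cap\n) \to 0
\]
together with $\n/(\fm\cap\n) \cong (\fm+\n)/\fm$, and check that this last quotient has dimension $1-\delta$: when $L \cong \kk$ we have $\n = \fm$ and the quotient vanishes, while when $L \not\cong \kk$ we have $\n \not\subseteq \fm$, whence maximality of $\fm$ forces $\fm + \n = H$. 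Substituting, the correction $(d-1)\delta$ vanishes in both cases ($\delta = 0$ when $L \not\cong \kk$; $d = 1$ when $L \cong \kk$), giving the desired equality.

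The main obstacle is really just uniform bookkeeping of the two cases $L \cong \kk$ and $L \not\cong \kk$, since several terms in the long exact sequence collapse or survive differently in each; introducing the indicator $\delta$ is what allows both cases to be handled at once.
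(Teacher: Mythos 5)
Your argument is correct and is essentially the paper's proof: the same short exact sequence $0\to\n\to H\to H/\n\to 0$, the same identifications $H/\n\cong L^{\oplus \dim L}$ and $\Hom_H(V,\kk)\cong (V/\fm V)^*$, and the same use of $\n/(\fm\cap\n)\cong(\fm+\n)/\fm$, with your indicator $\delta$ merely replacing the paper's uniform term $\dim\bigl(H/(\fm+\n)\bigr)$. One small correction: $\Ext^1_H(H,\kk)=0$ holds simply because $H$ is free (hence projective) as a module over itself; self-injectivity is not the relevant property, as it would instead kill $\Ext^1_H(-,H)$.
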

\begin{proof}
Applying $\Ext_H^1(-,\kk)$ to the short exact sequence $0\to \mathfrak n\to H\to H/\mathfrak n\to 0$,  we obtain the  exact sequence
 \[
 0\to \Hom_H(H/\mathfrak n,\kk)\to \Hom_H(H,\kk)\to \Hom_H(\mathfrak n,\kk)\to \Ext_H^1(H/\mathfrak n,\kk)\to 0.
 \]
Note that $H/\mathfrak n=H/Ann(L)\cong \bigoplus_{\dim L} L$ as $H$-modules. Also we have 
\[\Hom_H(V, \kk)~\cong~\Hom_\kk(V/\mathfrak{m} V, \kk)~\cong~ (V/\mathfrak{m} V)^*\] 
as vector spaces for any $H$-module $V$. Thus,
\begin{align*}
&\dim L\cdot \dim \Ext_H^1(L,\kk)\,=\,\dim \Ext_H^1(H/\mathfrak n,\kk)\\
=&\, \dim \Hom_H(H/\mathfrak n,\kk)+\dim \Hom_H(\mathfrak n,\kk)-\dim \Hom_H(H,\kk)\\
=&\, \dim (H/(\mathfrak m+\mathfrak n))+\dim (\mathfrak n/\mathfrak m\mathfrak n)-\dim (H/\mathfrak m)\\
=&\, \dim (\mathfrak n/\mathfrak m\mathfrak n)-\dim ((\mathfrak m+\mathfrak n)/\mathfrak m)\\
=&\, \dim (\mathfrak n/\mathfrak m\mathfrak n)-\dim (\mathfrak n/\mathfrak m\cap \mathfrak n)\\
=&\, \dim (\mathfrak m\cap \mathfrak n/\mathfrak m\mathfrak n).\qedhere
\end{align*}
\end{proof}
Note that if $L=\kk$, the condition $\kk$ being algebraically closed in the preceding lemma can be dropped, which applies to the following two generally known lemmas (e.g., see 
\cite{GZ10}).  We give their proofs for the sake of completeness. 

\begin{lem}\label{lem:dext}
Let $H$ be a finite-dimensional Hopf algebra over any field $\kk$. Then 
\[\dim \Ext_H^1(\kk,\kk)~=~\dim  (\mathfrak m/\mathfrak m^2)~=~\dim P(H^*).\]
\end{lem}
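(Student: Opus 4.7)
The plan is to split the double equality into two independent pieces, each short.

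For the first equality $\dim \Ext_H^1(\kk,\kk)=\dim(\mathfrak{m}/\mathfrak{m}^2)$, I would simply specialize the previous lemma (Lemma \ref{lem:ext}) to the case $L=\kk$. Then $\mathfrak{n}=\operatorname{Ann}(\kk)=\ker\epsilon=\mathfrak{m}$ and $\dim L=1$, so the formula immediately collapses to $\dim \Ext_H^1(\kk,\kk)=\dim(\mathfrak{m}\cap\mathfrak{m}/\mathfrak{m}\cdot \mathfrak{m})=\dim(\mathfrak{m}/\mathfrak{m}^2)$. The remark preceding the current lemma explicitly notes that for $L=\kk$ the hypothesis that $\kk$ be algebraically closed is unnecessary, so this works over any field.

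For the second equality $\dim(\mathfrak{m}/\mathfrak{m}^2)=\dim P(H^*)$, I would exhibit a $\kk$-linear isomorphism $P(H^*)\cong (\mathfrak{m}/\mathfrak{m}^2)^*$, which suffices since $H$ is finite dimensional. Unwinding definitions, an element $f\in H^*$ lies in $P(H^*)$ iff $f(ab)=f(a)\epsilon(b)+\epsilon(a)f(b)$ for all $a,b\in H$, and (by applying $\epsilon_{H^*}=\mathrm{ev}_{1_H}$ to the primitivity equation) one gets $f(1_H)=0$ for free. Given such an $f$, its values on $\mathfrak{m}\cdot\mathfrak{m}$ vanish because $\epsilon(a)=\epsilon(b)=0$ when $a,b\in\mathfrak{m}$, so $f$ restricts to a well-defined linear functional on $\mathfrak{m}/\mathfrak{m}^2$. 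Conversely, starting from $\phi\in(\mathfrak{m}/\mathfrak{m}^2)^*$, use the canonical decomposition $H=\kk\cdot 1_H\oplus \mathfrak{m}$ to define $f\in H^*$ by $f(1_H)=0$ and $f|_\mathfrak{m}$ the pullback of $\phi$. A direct check using $ab=\epsilon(a)\epsilon(b)\cdot 1_H+\epsilon(a)b'+\epsilon(b)a'+a'b'$ (with $a=\epsilon(a)\cdot 1_H+a'$, $b=\epsilon(b)\cdot 1_H+b'$, $a',b'\in\mathfrak{m}$) shows $f$ satisfies the derivation identity, i.e.\ $f\in P(H^*)$. These two assignments are manifestly inverse $\kk$-linear maps.

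I do not anticipate a real obstacle here: both parts are essentially bookkeeping. The only mildly delicate point is being careful that the reduction to Lemma \ref{lem:ext} is valid without the algebraic closure hypothesis, which is handled by the remark already in the text. Everything else is standard and elementary.
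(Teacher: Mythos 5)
Your proposal is correct and follows essentially the same route as the paper: the first equality by specializing Lemma \ref{lem:ext} to $L=\kk$ (where algebraic closure is indeed unnecessary), and the second by identifying $P(H^*)$ with the functionals vanishing on $\kk 1_H$ and on $\mathfrak m^2$, i.e.\ with $(\mathfrak m/\mathfrak m^2)^*$. The only difference is presentational: you build the explicit mutually inverse maps via the $\epsilon$-derivation characterization and the splitting $H=\kk 1_H\oplus\mathfrak m$, while the paper counts $\dim(1^\perp\cap C)$ for the subcoalgebra $C=\pi^*(H/\mathfrak m^2)^*$ and proves the two inclusions with $P(H^*)$ directly.
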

\begin{proof}
The first equality in the statement follows from Lemma \ref{lem:ext}. Next, consider the subcoalgebra $C=\pi^*(H/\mathfrak m^2)^*$ of $H^*$, where $\pi: H \to H/\mathfrak{m}^2$ is the natural algebra surjection. For $f \in C$, $\Delta_{H^*}(f)(\mathfrak{m} \o \mathfrak{m}) = f(\mathfrak{m}^2) = 0$.
This implies that $\Delta_{H^*}(f) \in \epsilon \otimes H^*+H^*\otimes \epsilon$. 

Let $I = 1^\perp \cap C$. Then $\dim I = \dim H -\dim \mathfrak{m}^2 - 1 = \dim(\mathfrak m/\mathfrak m^2) $. We will complete the proof by showing $I = P(H^*)$. For $f \in I$, $\Delta_{H^*}(f)= \e \o u + v \o \e + c \e \o \e$ for some $c \in \kk$ and $u, v \in I$. The counit property of $\Delta_{H^*}$ implies $f = v+c \e = u + c \e$. Since $f, u, v \in 1^\perp$, $c=0$ and $f = u =v$. Thus, $f \in P(H^*)$ and hence $I \subseteq P(H^*)$.  Conversely, if $f \in P(H^*)\setminus\{0\}$,  then $f(1)=0$ and $\Delta_{H^*}(f) = \e \o f + f \o \e$. For any $x,y \in \mathfrak{m}$, $f(xy) = \Delta_{H^*}(f)(x \o y) = 0$. Thus, $f \in I$ and hence $P(H^*) \subseteq I$. 
\end{proof}

We recall the associated graded algebra
\begin{equation}\label{eq:gr}
{\rm gr}_\mathfrak m H~:=~\bigoplus_{i=0}^\infty \mathfrak m^i/\mathfrak m^{i+1}
\end{equation}
with respect to the $\fm$-adic filtration in $H$.

\begin{lem}
Let $H$ be a finite-dimensional Hopf algebra over $\kk$ of characteristic $p>0$. Then $\bigcap_{i\ge 0} \mathfrak m^i$ is a Hopf ideal of $H$ with the quotient Hopf algebra $H/\bigcap_{i\ge 0} \mathfrak m^i$ of dimension $p^n$ for some $n\ge \dim P(H^*)$. In particular, if ${\rm gr}_\mathfrak mH$ is commutative, then so is $H/\bigcap_{i\ge 0} \mathfrak m^i$.
\end{lem}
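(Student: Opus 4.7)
The proof plan splits naturally into three phases: showing $I := \bigcap_{i\ge 0}\mathfrak m^i$ is a Hopf ideal, computing the dimension of $H/I$ together with the bound $n\ge\dim P(H^*)$, and handling the commutativity assertion.

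For the Hopf ideal claim, since $\dim H<\infty$ the descending chain $\mathfrak m\supseteq\mathfrak m^2\supseteq\cdots$ stabilizes, so $I=\mathfrak m^N$ for some $N\ge 0$. The augmentation ideal $\mathfrak m$ satisfies $\Delta(\mathfrak m)\subseteq\mathfrak m\otimes H+H\otimes\mathfrak m$; iterating this, one obtains $\Delta(\mathfrak m^n)\subseteq\sum_{k=0}^{n}\mathfrak m^k\otimes\mathfrak m^{n-k}$. Taking $n=2N$, every summand lies in $I\otimes H+H\otimes I$ since $k\ge N$ or $2N-k\ge N$, whence $\Delta(I)\subseteq I\otimes H+H\otimes I$. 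Since $S$ is an anti-algebra map with $\epsilon\circ S=\epsilon$, we have $S(\mathfrak m)\subseteq\mathfrak m$ and hence $S(I)\subseteq I$, so $I$ is a Hopf ideal.

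The quotient $H/I$ then has nilpotent augmentation ideal $\bar\mathfrak m$ (with $\bar\mathfrak m^N=0$), so it is local by Lemma \ref{lem:local}, and consequently $(H/I)^*$ is connected. Invoking the classical result that every finite-dimensional connected Hopf algebra over an algebraically closed field of characteristic $p$ has dimension a power of $p$, one gets $\dim(H/I)=p^n$. For the bound $n\ge\dim P(H^*)$, note $I\subseteq\mathfrak m^2$ (the degenerate cases $N\le 1$ being trivial), so $\bar\mathfrak m/\bar\mathfrak m^2=\mathfrak m/\mathfrak m^2$, and Lemma \ref{lem:dext} applied to $H/I$ yields $d:=\dim P(H^*)=\dim\mathfrak m/\mathfrak m^2=\dim P((H/I)^*)$. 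Since $\mathfrak g:=P((H/I)^*)$ is a restricted Lie subalgebra of $(H/I)^*$, the universal Hopf map $u(\mathfrak g)\to(H/I)^*$ is injective (any Hopf map from a connected Hopf algebra whose restriction to the first coradical term $\kk\oplus\mathfrak g$ is injective must itself be injective), so $p^d=\dim u(\mathfrak g)$ divides $p^n$ by Nichols-Zoeller, giving $n\ge d$.

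For the commutativity assertion, I would pass to the dual. Setting $A:=H/I$ (local), the $\bar\mathfrak m$-adic filtration dualizes to the coradical filtration on $A^*$ in the sense that $(A^*)_n=(\bar\mathfrak m^{n+1})^{\perp}$; consequently, $\mathrm{gr}_{\mathrm{cor}}A^*\cong(\mathrm{gr}_{\bar\mathfrak m}A)^*=(\mathrm{gr}_\mathfrak m H)^*$. If $\mathrm{gr}_\mathfrak m H$ is commutative, then $\mathrm{gr}_{\mathrm{cor}}A^*$ is cocommutative. I would then argue by induction on the coradical filtration of $A^*$ that cocommutativity lifts: for $b\in A^*_n$, the obstruction $(\tau-\mathrm{id})\Delta b$ has zero image in $\bigoplus_{i+j=n}\mathrm{gr}_iA^*\otimes\mathrm{gr}_jA^*$ by cocommutativity of the associated graded, hence lies in the lower filtration level $(A^*\otimes A^*)_{n-1}$; coassociativity combined with the inductive hypothesis then forces $(\tau-\mathrm{id})\Delta b=0$. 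Dualizing gives that $A=H/I$ is commutative.

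The main obstacle is the inductive lift in the final phase: gr-commutativity does not imply commutativity for arbitrary filtered algebras, so the argument must genuinely exploit both the coassociative structure and the duality between the $\bar\mathfrak m$-adic filtration on $A$ and the coradical filtration on $A^*$. A secondary (but easier) issue is invoking the power-of-$p$ theorem for finite-dimensional connected Hopf algebras in characteristic $p$, which needs to be cited cleanly.
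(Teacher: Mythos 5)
Your phases 1 and 2 are correct. Phase 1 is the standard direct verification of the statement the paper simply outsources to \cite[Lem. 3.3]{GZ10}. Phase 2 takes a genuinely different route: the paper notes that ${\rm gr}_{\mathfrak m}H$ is connected, generated in degree one by primitive elements, hence isomorphic to $u(\mathfrak g)$ with $\mathfrak g=P({\rm gr}_{\mathfrak m}H)\supseteq\mathfrak m/\mathfrak m^2$, which gives the $p$-power dimension and the bound $n\ge\dim P(H^*)$ simultaneously via Lemma \ref{lem:dext}; you instead quote the $p$-power theorem for finite-dimensional connected Hopf algebras applied to $(H/I)^*$ and then recover the bound by embedding $u(P((H/I)^*))$ through the Heyneman--Radford criterion (\cite[Thm. 5.3.1]{Mont93}) and invoking Nichols--Zoeller. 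This is valid, but be aware that the standard proof of the $p$-power theorem you cite is precisely the paper's ${\rm gr}$-argument, so your route trades the core computation for a citation rather than simplifying it.

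The genuine gap is phase 3, and it cannot be repaired. The inductive lift you sketch uses only coassociativity and the coradical filtration of $A^*$, and at that level the implication ``${\rm gr}$ cocommutative $\Rightarrow$ cocommutative'' is false; dually, a finite-dimensional local augmented algebra can be noncommutative while ${\rm gr}$ of its adic filtration is commutative. Worse, the clause you are trying to prove fails in characteristic $p$ as stated: take $H=\kk[G]$ for the modular group $G=\langle a,b \mid a^{p^2}=1,\ b^p=1,\ bab^{-1}=a^{1+p}\rangle$ of order $p^3$ ($p$ odd). Here $\mathfrak m$ is nilpotent, so $\bigcap_{i\ge 0}\mathfrak m^i=0$ and $H/\bigcap_{i\ge 0}\mathfrak m^i=\kk[G]$ is noncommutative; yet $ab-ba=a(1-a^p)b=-a(a-1)^pb\in\mathfrak m^p\subseteq\mathfrak m^3$, so the classes of $a-1$ and $b-1$, which generate ${\rm gr}_{\mathfrak m}H$ in degree one, commute, and ${\rm gr}_{\mathfrak m}H$ is commutative. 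The mechanism is exactly the obstacle you identified: the leading class of a commutator of elements of $\mathfrak m$ is a primitive element of the commutative, degree-one-generated graded Hopf algebra ${\rm gr}_{\mathfrak m}H$; in characteristic zero all such primitives sit in degree one, which forces commutators into $\bigcap_i\mathfrak m^i$ (this appears to be the setting of the paper's citation \cite[Lem. 3.5]{GZ10}), but in characteristic $p$ the $p$-th powers of degree-one primitives are again primitive, and the example above realizes exactly such a class, since $ab-ba$ is a unit multiple of $(a-1)^p$. So your ``main obstacle'' is a real obstruction rather than a missing trick, and the final sentence of the lemma requires either a characteristic-zero hypothesis or a hypothesis stronger than commutativity of ${\rm gr}_{\mathfrak m}H$; no completion of your phase 3 (nor the paper's citation as applied here) can establish it in the stated generality.
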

\begin{proof}
The assertion that $\bigcap_{i\ge 0} \mathfrak m^i$ is a Hopf ideal of $H$ and that ${\rm gr}_\mathfrak m H$ is a graded Hopf algebra follows directly from \cite[Lem. 3.3]{GZ10}.  The graded algebra ${\rm gr}_\mathfrak mH$ is connected, cocommutative, and generated by degree one elements, which are primitive. Thus, ${\rm gr}_\mathfrak mH$ is isomorphic to the restricted universal enveloping algebra $ u(\mathfrak g)$ (cf.  \cite[13.2.3]{Sw69},\cite[Thm. 1]{May}), where $\mathfrak g=P({\rm gr}_\mathfrak mH)\supseteq \mathfrak m/\mathfrak m^2$. 
Hence we have \[\dim H\big /\bigcap_{i\ge 0} \mathfrak m^i=\dim {\rm gr}_\mathfrak mH=p^{\dim \mathfrak g}\ge p^{\dim (\mathfrak m/\mathfrak m^2)}=p^{\dim P(H^*)}\]
by Lemma \ref{lem:dext}. The last conclusion is \cite[Lem. 3.5]{GZ10}.
\end{proof}

\begin{prop} \label{p:primitive1}
Let $H$ be a finite-dimensional Hopf algebra over an algebraically closed  field $\kk$ of characteristic $p >0$, which contains a nonzero primitive element $x$. Suppose $C =\sum_{i} C_i$ is a sum of simple subcoalgebras of $H$.
\begin{enumerate}[label=\rm{(\roman*)}]
    \item If $xC \subset C$, then any simple right $C$-comodule $M$ admits a left $\kk[x]$-action such that $M$ is a Hopf module in $_{\kk[x]}\!\MM^H$. In particular, $\dim (\kk[x]) \mid \dim (M)$ and hence $\dim (\kk[x])^2 \mid \dim (C_i)$ for all $i$.
    \item If $C$ is simple and $p \nmid \dim(C)$, then $\kk[x]C$ is a free left $\kk[x]$-module of rank $\dim(C)$. 
\end{enumerate}
\end{prop}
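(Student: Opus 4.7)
The plan is to derive both parts from the Nichols--Zoeller freeness theorem, applied to well-chosen Hopf modules over the Hopf subalgebra $\kk[x]\subseteq H$ generated by the primitive element $x$. Since $\Delta(x^{k})=\sum_{j=0}^{k}\binom{k}{j}x^{j}\otimes x^{k-j}$, $\kk[x]$ is indeed a Hopf subalgebra, and the same identity applied to products $x^{k}c$ with $c$ in any subcoalgebra $D$ shows that $\kk[x]D$ is again a subcoalgebra of $H$; combined with the obvious left $\kk[x]$-action, the Hopf-module compatibility $\rho(ym)=\sum y_{(1)}m_{(0)}\otimes y_{(2)}m_{(1)}$ then holds by multiplicativity of $\Delta$, placing $\kk[x]D$ inside ${}_{\kk[x]}\!\MM^{H}$.

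For part (ii), I will observe that $\kk[x]C$ is a Hopf module in ${}_{\kk[x]}\!\MM^{H}$, hence free as a left $\kk[x]$-module of some rank $r\le\dim C$ by Nichols--Zoeller, the bound coming from the surjection $\kk[x]\otimes C\twoheadrightarrow\kk[x]C$. To promote this to $r=\dim C$, I will analyze the coalgebra projection $\pi\colon\kk[x]C\twoheadrightarrow B:=\kk[x]C/\kk[x]^{+}C$: freeness forces $\dim B=r$, while the restriction of $\pi$ to $C$ is surjective (since $\kk[x]C=C+\kk[x]^{+}C$) with kernel $I:=C\cap\kk[x]^{+}C$, a coideal of $C$ contained in $\ker(\epsilon|_{C})$ because $\epsilon(\kk[x]^{+}C)=0$. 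The character $\chi=\sum_{i}c_{ii}\in C$ of the simple $C$-comodule $M$ satisfies $\epsilon(\chi)=\dim M\neq 0$ under the hypothesis $p\nmid\dim C=(\dim M)^{2}$, so $\chi\notin I$. Combining this with the coideal/subalgebra dictionary for the simple matrix coalgebra $C\cong M_{\dim M}(\kk)^{*}$ (coideals correspond dually to unital subalgebras of $M_{\dim M}(\kk)$) and with the nilpotence of $\kk[x]^{+}$, I will force the subalgebra $I^{\perp}$ to be all of $M_{\dim M}(\kk)$, i.e., $I=0$, and conclude $r=\dim C$.

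For part (i), I will first show that $xC\subseteq C$ forces $xC_{i}\subseteq C_{i}$ for every simple subcoalgebra $C_{i}$ of $C$: the Sweedler expansion $\Delta(xc)=xc_{(1)}\otimes c_{(2)}+c_{(1)}\otimes xc_{(2)}$ for $c\in C_{i}$ lands in $C\otimes C=\bigoplus_{k,\ell}C_{k}\otimes C_{\ell}$, and matching the tensor components against the decomposition rules out any contribution of $xC_{i}$ outside $C_{i}$. Each $C_{i}$ is therefore an object of ${}_{\kk[x]}\!\MM^{H}$, hence free over $\kk[x]$ by Nichols--Zoeller. To obtain the sharper divisibility $\dim\kk[x]\mid\dim M$ for the simple right $C_{i}$-comodule $M$, I will realize $M$ as a right subcomodule $N\subseteq C_{i}$ via the isotypic decomposition $C_{i}\cong(\dim M)\cdot M$ and define $\psi\colon N\to N$ by $\psi(v)=\pi_{N}(xv)$ for a projection $\pi_{N}\colon C_{i}\to N$ along a chosen comodule complement. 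The Sweedler identity $\rho(xv)=xv_{(0)}\otimes v_{(1)}+v_{(0)}\otimes xv_{(1)}$ together with $v_{(0)}\in N$ directly yields the Hopf-module compatibility for $\psi$, and the algebraic relations satisfied by $x$ in $H$ pull back to give a genuine $\kk[x]$-module action on $N$. Nichols--Zoeller applied to $N\in{}_{\kk[x]}\!\MM^{H}$ then delivers $\dim\kk[x]\mid\dim N=\dim M$, whence $\dim(\kk[x])^{2}\mid\dim C_{i}$.

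The hardest step is expected to be the verification in part (i) that $\psi$ genuinely defines a $\kk[x]$-action on $N$ rather than only the first-order compatibility: because $\pi_{N}$ does not commute with $L_{x}$ in general, the naive identity $\psi^{k}=\pi_{N}\circ L_{x}^{k}$ fails, so the argument will have to exploit the interaction between the isotypic decomposition of $C_{i}$ and the nilpotence of $x$ inside $H$ to control the correction terms. The analogous delicate step in part (ii) is deducing $I=0$ from the single observation $\chi\notin I$, which requires a careful use of the structure of unital subalgebras of matrix algebras together with the nilpotent filtration by powers of $\kk[x]^{+}$.
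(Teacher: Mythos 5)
Your reductions to Nichols--Zoeller are fine as far as they go (the observation that $xC\subseteq C$ forces $xC_i\subseteq C_i$, that $C_i$ and $\kk[x]C$ are Hopf modules in ${}_{\kk[x]}\MM^H$, and the first-order compatibility of $\psi=\pi_N\circ L_x$), but both parts break down exactly at the steps you flag, and the tool you propose to close them does not exist: $x$ need not be nilpotent and $\kk[x]^+$ need not be a nilpotent ideal. Indeed $\kk[x]\cong u(\mathfrak g)$ with $\mathfrak g=\operatorname{span}(x,x^p,\dots)$, and in the semisimple case $x^{p^n}=x$ (which is precisely the case used later in Proposition \ref{prop:semiprim}) the algebra $\kk[x]\cong\kk[C_p^n]^*$ is semisimple, so any argument resting on ``the nilpotence of $x$ inside $H$'' or ``the nilpotent filtration by powers of $\kk[x]^+$'' is vacuous. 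In part (i) this leaves the central point unproved: you have no mechanism to make $\psi$ satisfy the $p$-polynomial relation $x^{p^n}+a_{n-1}x^{p^{n-1}}+\cdots+a_0x=0$ that $x$ satisfies in $\kk[x]$, and without that $\psi$ does not define an algebra map $\kk[x]\to\End_\kk(N)$, so there is no $\kk[x]$-action to feed into Nichols--Zoeller. The paper closes this exact gap differently: it dualizes, lets $x$ act as a derivation on the separable algebra $C^*$, uses $HH^1(C^*,C^*)=0$ to write this action as $\operatorname{ad}(f_x)$ with $f_x=\sum_i f_i$, and then shifts each $f_i$ by a scalar multiple of $1_{C_i^*}$ (solving $\eta^{p^n}+\cdots+a_0\eta=-\lambda_i$, which needs $\kk$ algebraically closed) so that $f_x$ satisfies the same relation as $x$; only then is $x\cdot m:=f_i\cdot m$ a well-defined $\kk[x]$-action, and the Hopf-module axiom is verified from $xc=\sum f_x(c_2)c_1-f_x(c_1)c_2$.

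Part (ii) has an independent gap: from ``$I=C\cap\kk[x]^+C$ is a coideal of $C$ contained in $\ker\epsilon$ and $\chi\notin I$'' you cannot conclude $I=0$. For $C\cong M_d(\kk)^*$ the span of the off-diagonal comatrix elements $\{c_{ij}:i\neq j\}$ is a nonzero coideal inside $\ker\epsilon$ avoiding $\chi$, so the coideal/subalgebra dictionary plus $\epsilon(\chi)\neq0$ alone cannot force $I^\perp=M_d(\kk)$, and the extra input you invoke is again the unavailable nilpotence. The paper's argument for (ii) is structurally different and does supply the missing dimension count: first, (i) together with $p\nmid\dim C$ rules out $xC\subseteq C$ (since otherwise $\dim(\kk[x])^2$, a power of $p$, would divide $\dim C$); then $(xC+C)/C$ is a nonzero $C$-$C$-bicomodule of dimension at most $\dim C$, and since every nonzero bicomodule over the simple coalgebra $C$ has dimension at least $\dim C$, one gets $xC+C=xC\oplus C$ and, inductively, $\kk[x]C=\bigoplus_i x^iC$; finally $\dim C\mid\dim(\kk[x]C)$ and $\dim\kk[x]\mid\dim(\kk[x]C)$ (Nichols--Zoeller), with $\gcd(\dim\kk[x],\dim C)=1$, pin down the rank. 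If you want to keep your quotient-coalgebra framing, you would still need a substitute for this bicomodule argument (or for the appeal to (i)); as written, the proposal does not constitute a proof of either part.
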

\begin{proof}
(i) Since $xC\subset C$, we can let $x$ act on the dual algebra $C^*$ via usual dual module action, namely $(x g)(c)=g(S(x)c)=-g(xc)$ for any $c\in C,g\in C^*$. Furthermore, one can check that $x$ acts as a $\kk$-derivation on $C^*$ for $x$ is primitive. Since $C^*$ is separable, $HH^1(C^*, C^*)$ is trivial. Therefore, $x$ acts as an inner derivation on $C^*$, which means there exists $f_x \in C^*$ such that $x g = [f_x, g] = \ad(f_x)(g)$ for all $g \in C^*$. Note that $f_x\in C^*$ is uniquely determined by $x$ up to the center of $C^*$. Since $C^* = \bigoplus_{i} C^*_i$ as algebra, $f_x = \sum_i f_i$ where $f_i \in C_i^*$, where $f_i$ is uniquely determined up to a scalar multiple of $1_{C_i^*}$. By definition we obtain that $xc=\sum_{(c)} f_x(c_2) c_1-f_x(c_1) c_2 =\sum_{(c)} f_i(c_2) c_1-f_i(c_1) c_2 $  for $c \in C_i$. In particular, $xC_i \subset C_i$ for each $i$.

Suppose $M$ is a simple right comodule  of $C$. Then $M$ is also a simple right comodule of $C_i$ for some $i$, and hence a $C_i^*$-module given by
$$
g \cdot m ~= ~\sum_{(m)} m_0\, g(m_1)\,
$$
for all $m\in M$ and $g\in C_i^*$. We claim that one can choose a suitable $f_i\in C_i^*$ to define an  $\kk[x]$-module action on $M$ given by $x \cdot m := f_i \cdot m$ for $m \in M$. Since $\kk[x]$ is primitively generated, so $\kk[x]\cong u(\mathfrak g)$ with $\mathfrak g={\rm span}_\kk(x,x^p,x^{p^2},\ldots)$. Thus $x$ satisfies the following relation in $\kk[x]$ 
\begin{equation}\label{lemmaeq:xC}
x^{p^n}+a_{n-1}x^{p^{n-1}}+\cdots+a_1x^{p}+a_0x=0
\end{equation}
for some coefficients $a_0,\ldots,a_{n-1}\in \kk$. Then
\begin{align*}
0 & = \ad(f_x)^{p^n}+a_{n-1}\ad(f_x)^{p^{n-1}}+\cdots+a_1(\ad f_x)^{p}+a_0\ad(f_x)  \\
  & =\, \ad\left(f_x^{p^n}+a_{n-1}f_x^{p^{n-1}}+\cdots+a_1 f_x^{p}+a_0f_x\,\right),
\end{align*}
which is equivalent to 
$$
0  = \ad\left(f_j^{p^n}+a_{n-1}f_j^{p^{n-1}}+\cdots+a_1 f_j^{p}+a_0f_j\,\right)\quad \text{ for all }j. 
$$
This implies that $f_j^{p^n}+a_{n-1} f_j^{p^{n-1}}+\cdots+a_1 f_j^{p}+a_0 f_j=\lambda_j\, 1_{C^*}$ for some  scalar $\lambda_j \in \kk$. For each $j$, replacing $f_j$ by $f_j+\eta_j\,1_{C_j^*}$ with  $\eta_j \in \kk$ satisfying  $\eta_j^{p^n}+a_{n-1}\eta_j^{p^{n-1}}+\cdots+a_1\eta_j^{p}+a_0\eta_j=-\lambda_j$, we have $f_x$ satisfy the same relation as $x$ in \eqref{lemmaeq:xC}. This makes the $\kk[x]$-action:
$$
x\cdot m = \sum_{m} m_0 f_i(m_1) = \sum_{m} m_0 f_x(m_1), \quad \text{ for }m\in M,
$$ 
well-defined on $M$.

With this $\kk[x]$-module action on $M$, one can check directly that $M$ is a Hopf module in $\!_{\kk[x]} \MM^H$. This amounts to show that
\[
\sum_{(u\cdot m)} (u\cdot m)_0\otimes (u\cdot m)_1~=~\sum_{(u),\,(m)} u_1\cdot m_0\otimes u_2m_1
\]
for any $u\in \kk[x]$ and $m\in M$. Since $\kk[x]$ is generated by $x$, it suffices to check the above equation for $u=x$. Indeed, we have
\begin{align*}
\sum_{(x),(m)} x_1\cdot m_0\otimes x_2m_1&=\sum_{(m)} x\cdot m_0\otimes m_1+m_0\otimes xm_1\\
&=\sum_{(m)} m_0\otimes f_x(m_1)m_2+ m_0\otimes (f_x(m_2)m_1-f_x(m_1)m_2)\\
&=\sum_{(m)} m_0\otimes m_1f_x(m_2)\\
&=\sum_{(x\cdot m)} (x\cdot m)_1\otimes (x\cdot m)_2.
\end{align*}
Therefore, $M$ is a free $\kk[x]$-module by the Nichols-Zoeller Theorem. So $\dim (\kk[x]) \mid \dim(M)$, and hence $\dim (\kk[x])^2 \mid \dim(M)^2 =\dim(C_i)$.

(ii) It follows from (i) that  $xC \not\subset C$. Then, the comultiplication $\Delta$ defines  a natural nontrivial $C$-$C$-bicomodule structure on $(xC+C)/C$. Since every simple $C$-$C$-bicomodule has dimension $\dim(C)$ and $\dim(xC+C)/C\le \dim(C)$, we find $\dim(xC+C)/C= \dim(C)$. Hence $xC + C = xC \oplus C$. By repeating the argument, one can inductively show that 
\[\kk[x]\,C ~=~ \bigoplus\limits_{i=0}^n x^i\, C\quad \text{and}\quad \dim (x^i\,C)~ =~\dim(C)\]
where $x^iC\not\subseteq C\oplus xC\oplus\cdots \oplus x^{i-1} C$ for all $i\le n$. Thus, $\dim(C) \mid \dim(\kk[x] C)$. It is clear that $\kk[x]C$ is a Hopf module in $\!_{\kk[x]} \MM^H$, and so $\kk[x]C$ is a free  $\kk[x]$-module. In particular
$\dim(\kk[x]) \mid \dim(\kk[x] C)$. Therefore, $\dim(\kk[x])\dim(C) \mid \dim(\kk[x] C) \le \dim(\kk[x])\dim(C)$. Hence, $\dim(\kk[x])\dim(C) =\dim(\kk[x] C)$. Therefore, the $\kk[x]C$ is a free $\kk[x]$-module of rank $\dim(C)$.  
\end{proof}
\begin{cor}\label{cor:cad}
If $H$ is a $p^2$-dimensional Hopf algebra over an algebraically closed  field $\kk$ of characteristic $p>0$, which admits a nonzero primitive element $x$, then $\dim(H_0) \le  p$.
\end{cor}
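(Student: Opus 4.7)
The plan is to combine Proposition \ref{p:primitive1}(ii) with the vanishing of primitive elements in group algebras, via a dimension count on simple subcoalgebras of $H$.

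First I would note that since $x \in P(H)$ is nonzero, the subalgebra $\kk[x]$ is a commutative Hopf subalgebra of $H$. Being primitively generated, $\kk[x] \cong u(\fg)$ where $\fg$ is the restricted Lie subalgebra of $P(H)$ generated by $x$, so $\dim \kk[x] = p^m$ for some $m \ge 1$. By the Nichols-Zoeller Theorem, $p^m$ divides $p^2$, hence $\dim \kk[x] \in \{p, p^2\}$, and in particular $\dim \kk[x] \ge p$.

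Next, I would show that every simple subcoalgebra $C \subseteq H$ is one-dimensional. Since $\kk$ is algebraically closed, $\dim C = d^2$ for some positive integer $d$. If $p \nmid d^2$, then Proposition \ref{p:primitive1}(ii) yields $\dim(\kk[x]\, C) = \dim \kk[x] \cdot d^2 \le \dim H = p^2$, forcing $d^2 \le p^2/\dim \kk[x] \le p$ and hence $d = 1$. If $p \mid d^2$, then $p \mid d$ since $p$ is prime, so $d^2 \ge p^2 = \dim H$; this would force $C = H$ to be a simple coalgebra, contradicting the fact that $\kk \cdot 1$ is a proper subcoalgebra of $H$ (as $\dim H = p^2 > 1$). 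Therefore every simple subcoalgebra of $H$ has dimension one, so $H_0 = \kk[G(H)]$ is pointed.

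Finally, since $\kk[G(H)]$ is a Hopf subalgebra of $H$, Nichols-Zoeller gives $|G(H)| \in \{1, p, p^2\}$. If $|G(H)| = p^2$, then $H = \kk[G(H)]$ is a group algebra; but in any group algebra the equation $\sum_g a_g\, g \o g = \sum_g a_g (g \o 1 + 1 \o g)$ together with linear independence of $\{g \o h\}_{g,h \in G}$ forces every $a_g = 0$, so $P(\kk[G]) = 0$, contradicting $x \ne 0$. Hence $|G(H)| \le p$ and $\dim H_0 \le p$. The main obstacle is the case split for simple subcoalgebras, resolved cleanly by treating $p \nmid d^2$ versus $p \mid d^2$ separately; excluding the group-algebra case via primitivity of $x$ is then routine.
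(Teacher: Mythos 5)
There is a genuine gap at the step where you conclude that every simple subcoalgebra is one-dimensional. From $\dim(\kk[x]\,C)=\dim\kk[x]\cdot d^2\le p^2$ and $\dim\kk[x]\ge p$ you get only $d^2\le p$, and this does \emph{not} force $d=1$ once $p\ge 5$: for $p=5$ the case $d=2$ (a $4$-dimensional simple subcoalgebra) satisfies $d^2=4\le 5$ and is not excluded. Note also that you are trying to prove something strictly stronger than the corollary, namely that $H$ is pointed; that is essentially Question \ref{Q3} of the paper in the presence of a primitive element, and it cannot be obtained by this dimension count. The corollary only asserts the bound $\dim(H_0)\le p$, which does not require all simple subcoalgebras to be one-dimensional.

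Moreover, even after correcting the per-coalgebra estimate to $\dim C\le p$, your argument is missing the ingredient that turns a bound on each simple subcoalgebra into a bound on their sum $H_0=\sum_C C$: the subspaces $\kk[x]\,C$ for distinct simple subcoalgebras $C$ could a priori overlap, so $\dim(\kk[x]\,C)\le\dim H$ applied one $C$ at a time does not control $\sum_C\dim C$. This is exactly where the paper's proof does more work: writing $K=\kk[x]$ (and disposing of the case $K=H$ separately), it observes that $C\subseteq C+xC\subseteq\cdots\subseteq KC$ is a coalgebra filtration, so by \cite[Lem.~5.3.4]{Mont93} the subcoalgebra $KC$ is irreducible with coradical $(KC)_0=C$; since distinct simple subcoalgebras are coradicals of distinct irreducible components, $\sum_C KC=\bigoplus_C KC$, and then Proposition \ref{p:primitive1}(ii) gives $p^2\ge\sum_C\dim(KC)=p\sum_C\dim C=p\dim H_0$, i.e.\ $\dim H_0\le p$. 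Your proposal needs both this directness argument and the removal of the false implication $d^2\le p\Rightarrow d=1$; the final paragraph about $P(\kk[G])=0$ is correct but only becomes relevant if pointedness were already established, which it is not.
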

\begin{proof}
Let $K = \kk[x]$. If $K = H$, then $H_0 = \kk$, and the statement holds vacuously. Suppose $K \ne H$. Then $\dim(K) = p$ by Nichols-Zoeller Theorem since $K$ is a Hopf subalgebra of $H$. Let $C$ be any simple subcoalgebra of $H$. It is clear that $p\nmid \dim C$ for $\dim H=p^2$. By  Proposition \ref{p:primitive1} (ii), $\dim(KC) = p\dim(C)$.  Note that \[
C\subseteq C+xC\subseteq C+xC+x^2C\subseteq \cdots\subseteq KC
\]
is a coalgebra filtration on $KC$. So \cite[Lem. 5.3.4]{Mont93} implies that $KC$ is an irreducible subcoalgebra of $H$ with $(KC)_0 =C$ (see \cite[Definition 5.6.1]{Mont93}). Thus, 
\[\sum_{C} KC~ = ~\bigoplus_{C} KC,\] where the sum is over the set of all simple subcoalgebra $C$ of $H$. 
Hence 
$$
p^2=\dim(H) \ge \dim\left(\bigoplus_{C} KC\right) = p\left(\sum_C \dim C\right)=p\dim H_0,
$$
which yields that $p \ge \dim(H_0)$.
\end{proof}

\begin{prop}\label{prop:semiprim}
Let $H$ be a nonsemisimple $p^n$-dimensional Hopf algebra over an algebraically closed  field $\kk$ of characteristic $p >2$. If $H$ admits a semisimple primitive element $x$ such that $\dim \kk[x] = p^{n-1}$,  then $S^{2p} = \id$ and $He$ is a $p$-dimensional indecomposable projective $H$-module for any primitive idempotent $e \in \kk[x]$. In addition, if $H$ is unimodular, then $S^2=\id$.
\end{prop}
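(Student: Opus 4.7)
The plan is to establish the structure of $\kk[x]$ from the semisimplicity of $x$, obtain $\dim He = p$ by Nichols--Zoeller, construct an explicit $H$-module isomorphism $\phi \colon He \to (He)^{**}$ fixing $e$, and apply a trace argument in the spirit of Lemma \ref{l:dim} to deduce indecomposability, $S^{2p} = \id$, and the unimodular refinement. Since $x$ is semisimple and primitive with $\dim \kk[x] = p^{n-1}$, its minimal $p$-polynomial has distinct roots forming an $(n-1)$-dimensional $\F_p$-subspace $\Lambda \subset \kk$, so $\kk[x] \cong \kk^\Lambda$ with primitive orthogonal idempotents $\{e_\lambda\}_{\lambda \in \Lambda}$. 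By Nichols--Zoeller, $H$ is free of rank $p$ over $\kk[x]$, giving $\dim He = p$ and $He$ projective (as a direct summand of $H$ via $1 = e + (1-e)$). Since $S(x) = -x$ forces $S^2|_{\kk[x]} = \id$, the map $\phi(he) := S^2(h)e$ is well-defined and $H$-linear as $He \to {}_{S^2}(He) \cong (He)^{**}$, with $\phi(e) = e$.

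To prove indecomposability, I would decompose $He = \bigoplus_{i=1}^r P_i$ into indecomposable projectives. The isomorphism $\phi$ permutes the summands by some $\sigma$ with $P_i \cong P_{\sigma(i)}^{**}$. By Radford's formula $\phi^{2p^n} = \id$, so the eigenvalues of $\phi$ lie in $\{\pm 1\}$ in characteristic $p$. The trace argument from the proof of Lemma \ref{l:dim} applies on each $\sigma$-fixed summand, while non-trivial cyclic orbits contribute zero off-diagonally; hence $\tr(\phi) = 0$, and combined with the eigenvalue $+1$ at $e$ this forces all eigenvalues of $\phi$ to be $+1$. A non-trivial cyclic orbit of $\sigma$ would necessarily have $p$-power length with $1$-dimensional summands $\kk_\alpha$ (by the bound $\dim He = p$), but Lemma \ref{l:dim}(ii) applied to $\kk_\alpha$ (self-dual under $^{**}$) gives the contradiction $1 = \dim \kk_\alpha \geq p$. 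Therefore $\sigma = \id$ and each $P_i \cong P_i^{**}$, so Lemma \ref{l:dim}(ii) forces every odd-dimensional $P_i$ to have $\dim P_i \geq p$; since $\sum \dim P_i = p$ is odd, some $P_i$ has odd dimension equal to $p$ and $r = 1$. Thus $He$ is indecomposable.

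With $He$ indecomposable, $\phi$ is unipotent with all eigenvalues $+1$ on a $p$-dimensional space, so $(\phi - \id)^p = 0$, and the freshman's dream in characteristic $p$ gives $\phi^p = (\phi - \id)^p + \id = \id$. Hence $S^{2p}(h)e = he$ for all $h \in H$ and every primitive $e = e_\lambda$; summing over $\lambda$ using $\sum_\lambda e_\lambda = 1$ yields $S^{2p} = \id$. For the unimodular case, Radford's formula gives $S^4 = \ad_g$ for some $g \in G(H)$, with $g$ centralizing $\kk[x]$ (since $S^4|_{\kk[x]} = \id$). Comparing $\phi^2 \colon He_\lambda \to {}_{\ad_g}(He_\lambda)$ with the canonical isomorphism $\tilde\phi(v) = gv$ yields an endomorphism $\psi = \tilde\phi^{-1}\phi^2 \in \End_H(He_\lambda)$ corresponding to $g^{-1}e_\lambda \in e_\lambda H e_\lambda$. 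The main obstacle will be showing $\psi = \id$ via the local structure of $\End_H(He_\lambda)$; granting this, $ge_\lambda = e_\lambda$ for every $\lambda$, so $g = \sum_\lambda ge_\lambda = 1$ and $S^4 = \id$. Combined with $S^{2p} = \id$ and $\gcd(4, 2p) = 2$ (as $p > 2$), we conclude $S^2 = \id$.
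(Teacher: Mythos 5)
Your treatment of $S^{2p}=\id$ and of the indecomposability of $He$ is essentially the paper's argument: the same map $\phi=S^2|_{He}\colon He\to {}_{S^2}(He)\cong (He)^{**}$, eigenvalues $\pm 1$ from Radford's $S^4$-formula, $\operatorname{tr}(\phi)=0$ forcing all eigenvalues to be $+1$ because $\phi(e)=e$ and $\dim He=p$ is odd, then unipotence and the parity argument via Lemma \ref{l:dim}(ii). One caveat: $\phi$ is only $S^2$-semilinear, so it need not permute a chosen Krull--Schmidt decomposition of $He$, and your phrase ``non-trivial cyclic orbits contribute zero off-diagonally'' is not rigorous as stated; the diagonal blocks $\pi_i\circ\phi|_{P_i}\colon P_i\to {}_{S^2}P_i$ need not be isomorphisms. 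The clean fact you want is that $\operatorname{tr}(f)=0$ for \emph{any} $H$-map $f\colon P\to {}_{S^2}P$ with $P$ projective and $H$ nonsemisimple (this is the ingredient \cite[Lem.~1.1]{NW19} behind Lemma \ref{l:dim}, applied directly to $\phi$ on the projective module $He$, no decomposition needed); the paper instead gets $\operatorname{tr}(S^2|_{He})=0$ from the integral formula \cite[Lem.~3(b)]{RS}. Either way, this half is correct.

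The unimodular refinement is where there is a genuine gap, and you flag it yourself. You aim to prove $\psi=\id$, i.e.\ $g^{-1}e_\lambda=e_\lambda$ for all $\lambda$, i.e.\ $g=1$. But locality of $\End_H(He_\lambda)$ together with $g^{p^n}=1$ only yields that $\psi$ is \emph{unipotent}, not the identity, and nothing in your setup excludes a nontrivial \emph{central} distinguished group-like $g$: note that $S^4=\id$ only requires $\ad_g=\id$, so the statement $g=1$ (unimodularity of $H^*$) is strictly stronger than what you need, and you give no mechanism to force it. The paper sidesteps this with a short dichotomy you are missing: if $G(H)\neq 1$, pick any nontrivial group-like $h$; then $\kk\langle x,h\rangle$ is a Hopf subalgebra properly containing $\kk[x]$, hence equals $H$ by Nichols--Zoeller, and since $S^2$ fixes both generators, $S^2=\id$ outright (no unimodularity needed in this case); if $G(H)=1$, then the distinguished group-like of $H$ is automatically trivial, so $H^*$ is unimodular, and Radford's formula with $H$ unimodular gives $S^4=\id$, whence $S^2=\id$ from $S^{2p}=\id$ and $p>2$ (your gcd step). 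So the missing idea is this case split on $G(H)$, not a computation inside $\End_H(He_\lambda)$.
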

\begin{proof}
   Write $K=\kk[x]$, a connected Hopf subalgebra of $H$ generated by the primitive element $x$.  By \cite[Thm. 1]{May}, $K\cong u(P(K))$. Because $x$ is semisimple, by Hochschild's theorem \cite{Ho54}, $K \cong \kk[G]^*$ as Hopf algebras, where $G \cong C_p^{n-1}$. In particular, $K$ admits a complete set of orthogonal idempotents $e_1, \dots, e_{p^{n-1}}$ and $S^2(e_i)=e_i$ for all $i$. Since $H$ is a free $K$-module of rank $p$, $\dim(He_i) = p$ for all $i$.  Since $H$ is not semisimple, $\sum_{(\Lambda)} S(\Lambda_2)S^2(\Lambda_1)=\e(\Lambda)1=0$, where $\Lambda$ is a nontrivial left integral of $H$. By  \cite[Lem. 3(b)]{RS}, $\tr(r(e_i) \circ S^2) = 0$ for all $i$, where $r(e_i)$ denotes the operator on $H$ given by the right multiplication of $e_i$. Hence $\tr(r(e_i) \circ S^2)=\tr(S^2|_{He_i})=0$. By Radford's $S^4$-formula, we know $S^{4p^n}=\id$ and $(S^4-\id)^{p^n}=0$. Then we can write $S^2|_{He_i} = \pm  \id_{He_i}+N_i$ for some nilpotent operator $N_i$ of $He_i$. However, $e_i \in He_i$ and $S^2(e_i)=e_i$. Therefore, $S^2|_{He_i} = \id_{He_i}+N_i$ for all $i$. Since $\dim He_i = p$, $N_i^p=0$ and so $S^{2p}|_{He_i} = \id_{He_i}$. Therefore, $S^{2p} = \id$. Note that, for each $i$, $(He_i)^{**}\cong \!_{S^2}(He_i)\cong HS^2(e_i)=He_i$. 
   
    Now we show the $He_i$ is indecomposable. Assume to the contrary. Then for any indecomposable summand $P$ of $H e_i$,  $ 1< \dim P < p$ since  $H$ is nonsemisimple. Since $(He_i)^{**} \cong He_i$, $D^j(P)$ is a summand of $He_i$ for $j=0, \dots, p-1$, where $D(V)=V^{**}$. This implies $P^{**} \cong P$ for all indecomposable summands of $He_i$. Since $\dim He_i$ is odd, there must be an indecomposable summand of $He_i$, which has an odd dimension, but this contradicts Lemma \ref{l:dim}(ii).  
   
    For the last assertion, it suffices to show that $S^{4}={\rm id}$. If $H$ contains a nontrivial group-like element $g$, then $S^2={\rm id}$ since it holds for $g,x$ which generate $H$. On the other hand, if $G(H)=1$, then $H^*$ is unimodular. It follows immediately from Radford's $S^4$-formula such that $S^4=\id$ since both $H$ and $H^*$ are unimodular.
\end{proof}

\section{Affirmative answer to Questions \ref{Q2} and \ref{Q3} for $p\le 5$}
As an application of our previous results, in this section, we give an affirmative answer to our main Questions \ref{Q2} and \ref{Q3} for primes $p\le 5$. 

The ensuing result is a natural extension of \cite[Lem. 2.3]{NW19} to dimension $p^2$. Note that  the group algebra $\kk[G]$ for any  $p$-group $G$ of order $p^n$ is a local $p^n$-dimensional Hopf algebra over $\kk$ of characteristic $p$.

\begin{lem}\label{noIRR}
Let $\kk$ be an algebraically closed field of characteristic $p>2$. For any non-local Hopf algebra of dimension $p^2$, we have $|{\rm Irr}(H)|> 2$. Moreover, if $p=5$, then $|{\rm Irr}(H)|> 3$. 
\end{lem}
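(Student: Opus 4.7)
The plan is to prove both assertions by contradiction, using the dimension formula~\eqref{dimH}, the projective decomposition~\eqref{PM}, Lemma~\ref{l:dim}, and the fact that for unimodular $H$ the socle of $P(V)$ is isomorphic to $V^{**}$. Common set-up: under either of the hypotheses $|\irr(H)|=2$ (first assertion) or $|\irr(H)|=3$ with $p=5$ (second assertion), $|G(H^*)|$ is a divisor of $p^2$ that is at most $|\irr(H)|<p$, hence equal to $1$. Consequently, $\kk$ is the unique $1$-dimensional simple, $H$ is unimodular (the modular function of $H^*$ is trivial), and any other simple $V$ satisfies $V^{**}\cong V$: indeed, $V^*$ is simple of the same dimension $\ge 2$ (so not $\kk$), which under our enumerations forces $V^{**}\cong V$ in every case. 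Therefore $\operatorname{soc}(P(V))\cong V^{**}\cong V$, and whenever $V$ is not projective one has $[P(V):V]\ge 2$, hence $\dim P(V)\ge 2\dim V$.

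For the first assertion, assume $\irr(H)=\{\kk,V\}$ with $d=\dim V\ge 2$ and $V^*\cong V$. Applying~\eqref{PM} yields $V\otimes P(\kk)=[V\otimes V:\kk]\,P(V)$, and from $d^2=[V\otimes V:\kk]+d[V\otimes V:V]$ one deduces $[V\otimes V:\kk]=dk$ for some $k\in\{1,\dots,d\}$. Comparing dimensions gives $\dim P(\kk)=k\dim P(V)$, so~\eqref{dimH} becomes $p^2=(k+d)\dim P(V)$. Since $\dim P(V)\ge d\ge 2$ and $p$ is prime, this forces $k+d=p$ and $\dim P(V)=p$; the inequality $k\le d$ then yields $d\ge (p+1)/2$. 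But $V$ cannot be projective (projectivity would force $d\mid p^2$ with $2\le d<p$, impossible), so $\dim P(V)\ge 2d\ge p+1$, contradicting $\dim P(V)=p$.

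For the second assertion ($p=5$), suppose $\irr(H)=\{\kk,V_2,V_3\}$ with $d_i=\dim V_i\ge 2$. Either (a) $V_3\cong V_2^*$, forcing $d_2=d_3=d$, or (b) both $V_i$ are self-dual. In case (a), $P(V_2)^*\cong P(V_3)$ gives $\dim P(V_2)=\dim P(V_3)$, so the dimension formula reads $25=\dim P(\kk)+2d\dim P(V_2)$; the decomposition $V_2\otimes P(\kk)=[V_3\otimes V_2:\kk]P(V_2)+[V_3\otimes V_3:\kk]P(V_3)$, combined with the self-duality $(V_3\otimes V_2)^*\cong V_3\otimes V_2$ (which equates the multiplicities of $[V_2]$ and $[V_3]$ in $[V_3\otimes V_2]$ in the Grothendieck ring) and with the head identity $[V_3\otimes V_2:\kk]\ge 1$, leaves only finitely many triples $(d,\dim P(V_2),\dim P(\kk))$, each ruled out by the non-integrality of a tensor-product multiplicity.

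In case (b), because $V_i^{**}\cong V_i$ for all simples and $H$ is unimodular, the Nakayama permutation of the Frobenius algebra $H$ acts trivially on $\irr(H)$, so the Cartan matrix $C=([P(V_j):V_i])_{i,j}$ is symmetric. The dimension identity reads $25=(1,d_2,d_3)\,C\,(1,d_2,d_3)^T$. Combining the head/socle inequalities $C_{\kk\kk},C_{V_iV_i}\ge 2$ (whenever the corresponding simple is non-projective) with Lemma~\ref{l:dim} (which excludes odd projective dimensions below $5$) and, when $d_2=d_3=2$, the mod-$4$ congruence $a\equiv 25\equiv 1\pmod 4$ forced by the identity itself, restricts $C$ to a short list. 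Each candidate is eliminated by computing $V_i\otimes P(\kk)$ in two ways: the equation $2\dim P(\kk)=\sum_U[V_i\otimes U:\kk]\dim P(U)$ has no non-negative integer solution once the parity constraints $[V_i\otimes U:\kk]\equiv d_id_U\pmod 2$ (derived from the dimension of $V_i\otimes U$) are imposed. The bulk of the work, and hence the main obstacle, is the case analysis in (b); it requires the simultaneous use of the symmetry of $C$, the parities of tensor-product multiplicities, and the divisibility conditions coming from the projective modules $V_i\otimes P(\kk)$.
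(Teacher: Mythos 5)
Your set-up and your proof of the first assertion are essentially sound and run parallel to the paper's argument: triviality of $G(H^*)$, unimodularity, $\operatorname{soc} P(V)\cong V^{**}\cong V$, and then \eqref{dimH} together with \eqref{PM} give $p^2=(k+d)\dim P(V)$, forcing $k+d=p$, $\dim P(V)=p$, after which the socle bound $\dim P(V)\ge 2d\ge p+1$ yields the contradiction. One quibble: your parenthetical that projectivity of $V$ would force $d\mid p^2$ is not a known theorem in this generality, but it is also unnecessary, since projectivity would give $d=\dim P(V)=p$, contradicting $k\ge 1$.

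The genuine gap is in the $p=5$ assertion. Case (a) is only sketched (though plausibly completable along the lines you indicate), and case (b), which you yourself call ``the bulk of the work'', is not carried out; worse, the elimination mechanism you specify fails exactly where it is most needed. Take $d_2=d_3=2$ and $\dim P(\kk)=\dim P(V_2)=\dim P(V_3)=5$, which is precisely the configuration the paper must exclude at the end of its proof. Your equation reads $10=5\bigl([V_2\otimes V_2:\kk]+[V_2\otimes V_3:\kk]\bigr)$, and $[V_2\otimes V_2:\kk]=2$, $[V_2\otimes V_3:\kk]=0$ is a nonnegative integer solution compatible with your parity constraints and the head identity, so ``no non-negative integer solution'' is false; this candidate has to be killed instead by a Cartan count, as the paper does: $\dim P(V_i)=5$ and $[P(V_i):V_i]\ge 2$ force $[P(V_i):\kk]=1$, the symmetry of the Cartan matrix gives $[P(\kk):V_i]=1$, and then $5=\dim P(\kk)\ge 2+2\cdot 2=6$. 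Moreover, your parity constraint $[V_i\otimes U:\kk]\equiv d_i d_U\pmod 2$ is only valid when every nontrivial simple has even dimension; for patterns such as $(d_2,d_3)=(2,3)$ or $(3,3)$ it fails as stated, and those patterns are not addressed at all (they require Lemma \ref{l:dim} together with the socle bound). Note also that the paper avoids this enumeration: it manipulates the linear relations \eqref{E5}, \eqref{E3}, \eqref{E4}, rules out $\gcd(D_1,D_2)=1$ by a Hom-space argument, and thereby pins down $D_0=D_1=D_2=5$, $d_1=d_2=2$ before the final Cartan count. As written, your argument establishes $|\irr(H)|>2$ but not the ``moreover'' part for $p=5$.
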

\begin{proof}
Set ${\rm Irr}(H)=\{\kk=V_0,\ldots, V_n\}$ with $|{\rm Irr}(H)|=1+n \ge 2$. For simplicity, we write $(\dim V_i,\dim P(V_i))=(d_i,D_i)$, $N_{ij}^k=[V_i\otimes V_j: V_k]$ for all $0\le i,j,k\le n$.  Note that $[V_i \o V_j: V_k] = \dim \Hom_H(P(V_k), V_i \o V_j)$ and that  \eqref{dimH} and \eqref{PM} 
can be written as 
\begin{align}
   p^2&~=~\sum_{j=0}^n d_jD_j\,,\label{E1} \\
d_iD_k&~=~\sum_{j=0}^n N_{ij}^kD_j\,.\label{E2} 
\end{align}

Suppose $n=1$. Since $p>2$, we have $d_1>1$ otherwise $G(H^*)=2\nmid p^2$. Thus \eqref{E1} and \eqref{E2} implies that  
\begin{align*}
p^2=D_0+d_1D_1\ \text{and}\ d_1D_1=D_0+N_{11}^1D_1.
\end{align*} 
We get $D_1\mid D_0\mid p^2$. Hence $D_1=p$ and $D_0=kp$ for some $k\ge 1$. Since $d_1=p-k<D_1$, we must have $D_1\ge 2d_1$, which implies that $k>p/2$. So $N_{11}^1=(d_1D_1-D_0)/D_1=(p-2k)<0$, a contradiction!

Next suppose $n=2$ and $p=5$. We first treat the case  $V_1^*\cong V_2$ (hence $V_2^*\cong V_1$). This implies that $d_1=d_2>1$ and $D_1=D_2$. Moreover, \eqref{E1} and \eqref{E2} implies that  \begin{align*}
p^2=D_0+2d_1D_1, \ d_1D_1= d_2 D_1 = D_0+(N_{11}^1+N_{12}^1)D_1.
\end{align*} 
Similar calculation yields that $D_1=p$, $D_0=kp$ for $k\ge 1$ and $d_1=(p-k)/2\ge 2$. Since $p=5$, we must have $k=1$ and $d_1=2$, $D_0=D_1=5$. However,
$$
4 =\dim (V_2 \o V_1) = N_{21}^0 + 2\cdot (N_{21}^1+ N_{21}^2)  = N_{21}^0  + 4\cdot N^1_{21} \ge 1+4\cdot N^1_{21}
$$
since $V_2 \o V_1$ is self-dual and $N_{21}^0 \ge 1$. This forces $N^1_{21} = 0$ and $N_{21}^0 = 4$ but this leads to the absurd inequality 
$$
2p = d_2 D_0 \ge N_{21}^0 D_1 = 4p\,. 
$$

It remains to treat the case such that $V_i^*\cong V_i$ for $i=1,2$. Note that we still have $d_1,d_2>1$. For all $0\le i,j,k\le 2$, we have 
\begin{align*}
N_{ij}^k=[V_i\otimes V_j:V_k]=[(V_i\otimes V_j)^*: V_k^*]=[V_j^*\otimes V_i^*: V_k^*]=[V_j\otimes V_i:V_k]=N_{ji}^k. 
\end{align*}
Subtracting  \eqref{E2} for $i=k=2$ from \eqref{E1} and  letting $\{i,k\} = \{1,2\}$ in \eqref{E2}, we get 
\begin{align}
 p^2&~=~(2d_2-N_{22}^2)D_2+(d_1-N_{21}^2)D_1,\label{E5}\\
 N_{11}^2D_1&~=~(d_1-N_{12}^2)D_2,\label{E3}\\
N_{22}^1D_2 &~=~(d_2-N_{21}^1)D_1.\label{E4}
\end{align}
 If $d_1=N_{21}^2$, then \eqref{E5} implies that $D_2=p$ and $d_2=(p+N_{22}^2)/2\ge \frac{p+1}{2} > D_2/2$. Therefore, $d_2 = D_2$ or $V_2$ is projective, but this is absurd as $d_2D_2=p^2$.
 By the same argument,   $d_2=N_{21}^1$ is also impossible. Therefore,  $d_1>N_{21}^2$ and $d_2>N_{21}^1$. By \eqref{E5}, $\gcd(D_1,D_2)\mid p^2$. If $\gcd(D_1,D_2)=1$, \eqref{E3} and \eqref{E4} imply that $D_1|d_1-N_{12}^2$ and $D_2|d_2-N_{21}^1$. Since $D_i \ge  d_i > 0$, $N_{12}^2 = N_{21}^1=N_{12}^1 =0$. Therefore, all the composition factor of $V_1 \o V_2$ are isomorphic to $V_0$, and hence
$$
0 \ne  \Hom_H(V_1 \o V_2: \kk) \cong  \Hom_H(V_1, V_2)
$$
which is absurd. Thus, we must have $\gcd(D_1,D_2)=p$. By \eqref{E1} again, we conclude that $D_0=D_1=D_2=5$ and $d_1=d_2=2$.  This implies $[P(V_i):V_0]=1$ for $i=1,2$. By duality, $[P(V_0): V_i]=1$ for $i=1,2$, which leads  
\[5=\dim P(V_0)=[P(V_0):V_0]+2([P(V_0):V_1]+[P(V_0):V_2])\ge 2+2\cdot 2=6,
\]
a contradiction again! 
\end{proof}

In \cite{St,Na}, the quantum coordinate ring $\mathcal O_q(SL_2)$ was used to show the nonexistence of $S$-invariant 4-dimensional simple subcoalgebras in Hopf algebras of low dimension in  characteristic 0. We extend their results to $p^2$-dimensional Hopf algebras in characteristic $p>2$. 

\begin{lem}\label{4dimC}
Let $\kk$ be an algebraically closed field of characteristic $p>2$, and $H$ a nonsemisimple  Hopf algebra of dimension $p^2$ over $\kk$ such that $H$ and  $H^*$ are not pointed. Then $H$ and $H^*$ are unimodular, and hence $S^4=\id$. If, in addition, $S^2 \ne \id$, then  $H$ has no $S$-invariant 4-dimensional simple subcoalgebras.
\end{lem}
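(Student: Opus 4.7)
The plan has three stages: unimodularity of $H$, unimodularity of $H^*$ (together yielding $S^4=\id$ via Radford's $S^4$-formula), and the absence of $S$-invariant $4$-dimensional simple subcoalgebras when $S^2\ne\id$.

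For the unimodularity of $H$, suppose the distinguished grouplike $\alpha\in G(H^*)$ is nontrivial. Since its order is a positive power of $p$ and $|G(H^*)|$ divides $p^2$ by Nichols--Zoeller, we have $|G(H^*)|\in\{p,p^2\}$. The case $|G(H^*)|=p^2$ forces $H^*=\kk[G(H^*)]$, hence $H\cong\kk^{G(H^*)}$ is commutative semisimple, contradicting the nonsemisimplicity of $H$. The case $|G(H^*)|=p$ invokes Theorem~\ref{thm:basic} to force $H^*$ to be pointed, again a contradiction. Thus $\alpha=\epsilon$. The parallel argument using the distinguished grouplike $g\in G(H)$ and Theorem~\ref{thm:lageG} shows $H^*$ is unimodular, except that Theorem~\ref{thm:lageG} requires $H$ to be non-cosemisimple. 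The residual case ($|G(H)|=p$ with $H$ cosemisimple) must be eliminated separately, for instance by combining the coradical decomposition $H=\kk[G(H)]\oplus C$ with $\dim C=p(p-1)$ a sum of squares of integers $\ge 2$ and Lemma~\ref{l:freeness} applied to the simple comodules supported by $C$, forcing a numerical contradiction. Radford's $S^4$-formula then yields $S^4=\id$ immediately.

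For the non-existence of $S$-invariant $4$-dimensional simple subcoalgebras, let $C\subset H$ be such a coalgebra and fix a matrix-coefficient basis $(c_{ij})_{1\le i,j\le 2}$ of $C$. The anti-coalgebra property of $S$ implies that $\bigl(S(c_{ji})\bigr)_{ij}$ is another matrix-coefficient basis of $C$, so there exists $Q\in\GL_2(\kk)$ --- the Frobenius--Schur matrix of the associated self-dual $2$-dimensional simple comodule $V$ --- such that $S(c_{ij})=\sum_{k,l}Q_{ki}\,c_{kl}\,(Q^{-1})_{lj}$, with $Q$ symmetric or antisymmetric. Consequently $S^2|_C$ is the inner automorphism of $C\cong M_2(\kk)^*$ given by conjugation by $Q(Q^{-1})^{\mathrm{t}}$, and $S^2\ne\id$ combined with $S^4=\id$ forces this inner automorphism to have order $2$. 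After normalization, the generators $c_{ij}$ satisfy the defining relations of $\OO_q(SL_2)$ for some $q\in\kk^\times$ with $q^2\ne 1$, in the spirit of \cite{St,Na}.

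The principal obstacle is then to translate these $q$-commutation relations into strictly more than $p^2$ linearly independent elements in the Hopf subalgebra $\kk\langle C\rangle$ of $H$, thereby contradicting $\dim H=p^2$. In characteristic zero, Stefan and Natale exploit the structure of $\OO_q(SL_2)$ directly; in positive characteristic one must verify that the finiteness of the order of $q$ (which divides $2p^n$ by Radford's $S^4$-theorem) does not collapse the dimensional lower bound, and the hypothesis $p>2$ is precisely what prevents $q^2=1$ from degenerating the commutation relations into a much smaller commutative subalgebra. Handling this characteristic-$p$ adaptation of the quantum-$SL_2$ dimension estimate is the main technical hurdle.
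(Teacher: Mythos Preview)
Your unimodularity argument is aligned with the paper's, but the ``residual case'' you isolate is handled more simply there: if $H$ is cosemisimple then $H^*$ is semisimple, and a semisimple Hopf algebra is automatically unimodular, so no separate numerical argument via Lemma~\ref{l:freeness} is needed.

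For the second part there is a genuine gap. Your plan is to reach a contradiction by producing strictly more than $p^2$ linearly independent elements in $\kk\langle C\rangle$, but this cannot succeed: $\kk\langle C\rangle$ is a Hopf subalgebra of $H$, hence has dimension at most $p^2$, and in positive characteristic $\OO_q(SL_2)$ genuinely admits Hopf algebra quotients of dimension $\le p^2$. So a pure dimension count along the Stefan--Natale lines is not available here; you yourself flag this as an unresolved hurdle, and it is. The paper's argument is structurally different. After showing (as you do) that $C$ generates $H$ and that $H$ is a quotient of $\OO_q(SL_2)$ with $q=\i$ a primitive fourth root of unity, one passes to the \emph{commutative} Hopf subalgebra $D$ generated by the squares $e_{ij}^2$, which is a quotient of $\OO(SL_2)$. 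Since $D\ne H$ (as $S^2|_D=\id$), Nichols--Zoeller forces $\dim D\in\{1,p\}$, and one invokes the classification of $p$-dimensional Hopf algebras (Theorem~\ref{thm:p1}). If $D\cong u(\fa_1)$, then $H$ contains a semisimple primitive element generating a $p$-dimensional subalgebra, and Proposition~\ref{prop:semiprim} (together with unimodularity) forces $S^2=\id$, a contradiction. In the remaining cases $D\cong u(\fa_2)$, $\kk[C_p]$, or $\kk$, one has $(D^+)^p=0$, so $e_{12}$ and $e_{21}$ are nilpotent; the $\OO_\i(SL_2)$ relations then show they generate nilpotent two-sided ideals, whence $H/Jac(H)$ is commutative and $H^*$ is pointed, contradicting the hypothesis. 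The missing idea is precisely this passage to $D$ together with the case analysis via Theorem~\ref{thm:p1} and Proposition~\ref{prop:semiprim}.
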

\begin{proof}
   Since $H$ is not semisimple and $H^*$ is not pointed, $G(H^*)$ is trivial by Theorem \ref{thm:basic}, and hence $H$ is unimodular. We claim that $H^*$ is also unimodular. It is true if $H^*$ is semisimple. On the other hand, if $H^*$ is not semisimple, then $G(H)$ has to be trivial; otherwise, $H$ would be pointed by Theorem \ref{thm:basic}. In particular,  $H^*$ is unimodular. Thus by Radford's $S^4$-formula,  $S^4 =\id$. 
    
    Now suppose $S^2\neq {\rm id}$ and assume that $H$ has a 4-dimensional simple subcoalgebra $C$ such that $S(C)=C$. Let $K$ be the subalgebra of $H$ generated by $C$, which is  a noncocommutative Hopf subalgebra of $H$. Since the $p$-dimensional Hopf algebras over $\kk$ are cocommutative (cf. \cite{NW19} or Theorem \ref{thm:p1}), $\dim K\neq p$. Hence $K=H$ by the Nichols-Zoeller theorem. As a consequence, ${\rm ord}(S^2)={\rm ord}(S^2|_C)=2$.

    Note that the argument of \cite[Thm. 1.4(a)]{St} remains valid when ${\rm char}(\kk)=p\nmid {\rm ord}(S^2)=2$. Hence we can choose a  basis $\{e_{ij}\}_{1\le i,j\le 2}$ of $C$ such that $\Delta(e_{ij})=\sum_{1\le k\le 2} e_{ik}\otimes e_{kj}$, $\epsilon(e_{ij})=\delta_{ij}$ and $S|_{C}$ is given by 
    \[
\begin{pmatrix}
S(e_{11}) & S(e_{12})\\
S(e_{21}) & S(e_{22})
\end{pmatrix}
=
\begin{pmatrix}
e_{22} & -\i e_{12}\\
\i e_{21} & e_{11}
\end{pmatrix},
    \]
    where $\i \in \kk$ satisfies $\i^2=-1$. By the antipode axiom, the following relations hold in $H$:
    \begin{equation}\label{RSL1}
         e_{12}e_{11}=\i e_{11}e_{12},\quad  e_{22}e_{12}=\i e_{12}e_{22}, \quad e_{21}e_{11}=\i e_{11}e_{21},
    \end{equation}
 \begin{equation}\label{RSL2}
         e_{22}e_{21}=\i e_{21}e_{22},\quad e_{12}e_{21}=e_{21}e_{12}, \quad e_{11}e_{22}+e_{12}e_{21}=1,
    \end{equation}
\begin{equation}\label{RSL3}
        e_{11}e_{22}-e_{22}e_{11}=-2\i e_{12}e_{21}.
    \end{equation}
    As a consequence, $H$ is a quotient of the quantum coordinate ring $\mathcal O_q(SL_2)$ over $\kk$ with $q=\i$. Consider the subalgebra $D$ of $H$ generated by $e_{11}^2,e_{12}^2,e_{21}^2,e_{22}^2$. It is straightforward to check that $D$  is a commutative Hopf subalgebra with $\Delta(e_{ij}^2)=\sum_{1\le k\le 2} e_{ik}^2\otimes e_{kj}^2$, $\epsilon(e_{ij}^2)=\delta_{ij}$ and 
\[  \begin{pmatrix}
S(e_{11}^2) & S(e_{12}^2)\\
S(e_{21}^2) & S(e_{22}^2)
\end{pmatrix}
=
\begin{pmatrix}
e_{22}^2 & -e_{12}^2\\
-e_{21}^2 & e_{11}^2
\end{pmatrix}.
\]
Thus $D$ is a quotient of the commutative Hopf algebra $\mathcal O(SL(2))$ over $\kk$. It is clear that $D\neq H$ since $H$ is not involutive. So $\dim D=p$ or $1$. We have two cases to consider according to the classification of $p$-dimensional Hopf algebras over $\kk$ (cf. \cite{NW19} or Theorem \ref{thm:p1}):  (1) $D\cong \kk[x]/(x^p-x)$ with $x$ being primitive. Since $x$ is semisimple primitive, we know $S^2={\rm id}$ by Proposition \ref{prop:semiprim}, a contradiction. (2) $D\cong \kk[x]/(x^p)$ with $x$ being primitive, $D \cong \kk[C_p]$ or $D=\kk$. Then $(D^+)^p=0$ and so $y^p=0$ for $y \in \{e_{11}^2-1,e_{12}^{2},e_{21}^2,e_{22}^2-1\}$. Note that $Hy=yH$ is nilpotent for $y \in \{e_{21}, e_{12}\}$. Thus, $Jac(H) \supseteq (e_{12}, e_{21})$. It follows from the relations \eqref{RSL1} and \eqref{RSL2} that $H/Jac(H)$ is commutative. Thus, every simple $H$-module is 1-dimensional; hence, $H^*$ is pointed, which is a contradiction.
\end{proof}

The following result is simply the dual version of the preceding lemma. 

\begin{lem}\label{4dimR}
Let $\kk$ be an algebraically closed field of characteristic $p>2$, and $H$  a non-cosemisimple Hopf algebra of dimension $p^2$ such that $H$ and $H^*$ are not pointed. Then $H$ and $H^*$ are unimodular and hence $S^4 = \id$. If, in addition, $S^2 \ne \id$, then $H$ has no self-dual 2-dimensional simple modules.
\end{lem}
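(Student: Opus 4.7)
The plan is to deduce this lemma by applying Lemma~\ref{4dimC} to $H^*$ in place of $H$. The hypotheses of Lemma~\ref{4dimC} are self-dual under $H \leftrightarrow H^*$: for any finite-dimensional coalgebra, cosemisimplicity is equivalent to the semisimplicity of its dual algebra, so ``$H$ non-cosemisimple'' is precisely ``$H^*$ nonsemisimple''; and the condition that neither $H$ nor $H^*$ is pointed is obviously symmetric in $H$ and $H^*$. Hence Lemma~\ref{4dimC} is applicable to $H^*$.

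Applying Lemma~\ref{4dimC} to $H^*$ yields that $H^*$ and $(H^*)^* = H$ are both unimodular, and that $S_{H^*}^4 = \id$. Since the antipode of $H^*$ is the transpose of $S$, this gives $S^4 = \id$ on $H$, establishing the first assertion.

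For the final assertion, I use the standard bijection between $\irr(H)$ and the set of simple subcoalgebras of $H^*$: to a simple $H$-module $V$ one attaches its matrix-coefficient coalgebra $C_V \subseteq H^*$, which is simple of dimension $(\dim V)^2$ as $\kk$ is algebraically closed. A short computation with dual bases, using $(h\cdot f)(v) = f(S(h)v)$ on $V^*$, shows that the matrix coefficients of $V^*$ are the images of those of $V$ under the dual antipode, so $C_{V^*} = S_{H^*}(C_V)$. Consequently, $V \cong V^*$ if and only if $C_V$ is $S_{H^*}$-invariant, and $\dim V = 2$ corresponds to $\dim C_V = 4$.

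Now if $S^2 \ne \id$ on $H$, then $S_{H^*}^2 \ne \id$ on $H^*$, so the second part of Lemma~\ref{4dimC} applied to $H^*$ forbids any $S_{H^*}$-invariant $4$-dimensional simple subcoalgebras of $H^*$. Under the dictionary above this translates precisely to the nonexistence of self-dual $2$-dimensional simple $H$-modules. The only place where care is needed is the verification that $C_{V^*} = S_{H^*}(C_V)$, but this is a routine calculation, so I anticipate no serious obstacle; the proof is essentially pure bookkeeping under the $H \leftrightarrow H^*$ duality.
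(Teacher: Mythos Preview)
Your proposal is correct and is precisely the approach the paper intends: the paper gives no separate proof, stating only that this lemma ``is simply the dual version of the preceding lemma,'' and you have spelled out exactly that duality, including the standard correspondence $V \leftrightarrow C_V$ with $C_{V^*}=S_{H^*}(C_V)$.
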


Now, we prove the main result of this section.

\begin{thm}\label{thm:5}
    Let $\kk$ be an algebraically closed field of positive characteristic $p\le 5$, and $H$ a $p^2$-dimensional Hopf algebra over $\kk$. Then $H$ or $H^*$ is pointed. 
\end{thm}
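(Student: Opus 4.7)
The plan is to argue by contradiction, assuming that neither $H$ nor $H^*$ is pointed. If $H$ is both semisimple and cosemisimple, then the Etingof--Gelaki lifting theorem produces a semisimple Hopf algebra of dimension $p^2$ in characteristic zero, which by Masuoka's theorem must be a group algebra; since lifting preserves pointedness, $H$ itself is pointed, a contradiction. Swapping $H$ with $H^*$ if necessary, we may therefore assume $H$ is not semisimple. Since $H^*$ is not pointed, $|G(H^*)| \in \{1, p\}$, and Theorem~\ref{thm:basic} rules out $|G(H^*)| = p$, so $|G(H^*)| = 1$. In particular $H$ must be non-local (else $H^*$ is connected, hence pointed), and Lemma~\ref{noIRR} gives $|\irr(H)| > 2$ for $p > 2$ and $|\irr(H)| > 3$ for $p = 5$; moreover $\dim P(\kk) \ge 2$ by nonsemisimplicity, and each non-trivial simple $H$-module has dimension at least $2$.

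For $p = 2$, non-locality of $H$ produces a simple module $V$ with $\dim V \ge 2$, and \eqref{dimH} yields $4 = \dim H \ge \dim P(\kk) + \dim V \cdot \dim P(V) \ge 2 + 4 = 6$, a contradiction. For $p = 3$, Lemma~\ref{noIRR} furnishes at least two non-trivial simples, and the analogous count forces $9 \ge 2 + 2 \cdot 4 = 10$, again a contradiction, independent of the cosemisimplicity status of $H$.

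For $p = 5$ the direct estimate $25 \ge 2 + 3 \cdot 4 = 14$ is far from tight, so finer information is required. First rule out $H$ cosemisimple by analyzing the decomposition $H_0 = H = \kk[G(H)] \oplus \bigoplus_i C_i$ with $\dim C_i \ge 4$, combined with $|G(H)| \le p$ and the lower bound $|\irr(H^*)| \ge 4$ supplied by Lemma~\ref{noIRR}. Hence $H$ is noncosemisimple, Theorem~\ref{thm:lageG} gives $|G(H)| = 1$, and Lemmas~\ref{4dimC} and~\ref{4dimR} imply that $H$ and $H^*$ are unimodular, $S^4 = \id$, and---when $S^2 \ne \id$---there are no self-dual $2$-dimensional simple $H$- or $H^*$-modules. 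Applying Proposition~\ref{prop:semiprim} together with a Nichols--Zoeller freeness argument to any nonzero primitive element of $H$ shows $P(H) = 0$, and dually $P(H^*) = 0$. The remaining numerical candidates for $(\dim V, \dim P(V))_{V \in \irr(H)}$ are then eliminated via Lemma~\ref{l:dim} (which restricts each $\dim P(V)$ to be even or at least $p$), the dual-pair structure on $\irr(H)$, the Cartan identity~\eqref{PM}, and a parallel analysis on the coalgebra side.

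The main obstacle is this $p = 5$ enumeration, especially the involutive subcase $S^2 = \id$, in which the parity and no-self-dual-$2$-dimensional constraints weaken. I anticipate that the closing step will require first establishing $\dim P(\kk) = p$ through a tensor-product or freeness identity, after which~\eqref{PM} together with the bound $|\irr(H)| \ge 4$ and unimodularity should rigidify the Cartan matrix enough to yield the final contradiction.
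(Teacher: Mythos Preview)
Your outline is sound through $p=3$, but the $p=5$ case has genuine gaps.

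\textbf{The involutory subcase $S^2=\id$ is the real problem.} Your only dimension constraint on indecomposable projectives is Lemma~\ref{l:dim}, which for odd $p$ says merely that an odd-dimensional $P$ has $\dim P\ge p$; even dimensions are unconstrained. With $|\irr(H)|\ge 4$, $G(H^*)=1$, Cartan symmetry, and $\Ext^1_H(\kk,\kk)=0$ one can still write down consistent candidate data such as $(d_i)=(1,2,2,2)$ with $(D_i)=(7,5,2,2)$ and a symmetric Cartan matrix, and it is not clear your listed tools (Lemma~\ref{l:dim}, duality pairing, \eqref{PM}) eliminate all such patterns. The paper's key device here is different and sharper: since $H$ is nonsemisimple and $S^2=\id$, the Radford--Schneider trace identity gives $\tr(r(e))=\tr(r(e)\circ S^2)=0$ in $\kk$ for every idempotent $e$, whence $p\mid\dim P(V)$ for \emph{every} $V$. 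Then $p^2\ge(1+2n)p$ forces $n\le 2$, contradicting $n\ge 3$. Your speculative ``$\dim P(\kk)=p$ via freeness'' is in the right spirit but you are missing this trace ingredient.

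\textbf{Two smaller issues in the $S^2\ne\id$ subcase.} First, Proposition~\ref{prop:semiprim} applies only to a \emph{semisimple} primitive; a nilpotent primitive $x$ with $x^p=0$ is not covered, so your deduction of $P(H)=0$ is incomplete. The correct tool is Corollary~\ref{cor:cad}: any nonzero primitive forces $\dim H_0\le p$, which collides with $|\irr(H^*)|\ge 4$ and $|G(H)|=1$. (In fact the paper runs this step in the opposite direction: it shows the configuration with $P(H)=0$ is impossible when $S^2\ne\id$, hence $P(H)\ne 0$, and then applies Corollary~\ref{cor:cad}.) Second, your proposed elimination of the cosemisimple case by the bare count $25=|G(H)|+\sum\dim C_i$ together with $|\irr(H^*)|\ge 4$ does not yield a contradiction (e.g.\ $25=1+4+4+16$ survives); the paper instead shows any $2$-dimensional simple $V$ must satisfy $\kk_g\otimes V\cong V$ for a nontrivial $g\in G(H)$ and invokes Lemma~\ref{l:freeness} to get $5\mid\dim V$.
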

\begin{proof}
The statement is obvious when $p=2$. Now we consider $p=3$. By \cite[Cor. 3.2(i)]{EG98}, $H$ cannot be both semisimple and cosemisimple since $\dim H=0$ in $\kk$. Without loss of generality, we assume $H$ to be nonsemisimple by duality. Suppose that $H^*$ is not pointed. In particular,  $H$ is not local. Hence $|{\rm Irr}(H)|\ge 3$ by Lemma \ref{noIRR}, and $G(H^*)$ is trivial by Theorem \ref{thm:basic}. Since $\dim H=9$, a simple dimension argument yields that ${\rm Irr}(H)=\{V_0,V_1,V_2\}$ where $\dim V_0=1$ and $\dim V_1=\dim V_2=2$ with $P(V_i)=V_i$ for all $i$, which contradicts to the fact that $H$ is nonsemisimple. Therefore, $H^*$ is pointed.

It remains to deal with the case for $p=5$. We may still assume $H$ to be nonsemisimple. Suppose $H$ and $H^*$ are not pointed. Then $G(H^*)$ is trivial by Theorem \ref{thm:basic} and $S^4=\id$ by Lemma \ref{4dimC}. We  consider the cases (i) $\ord(S^2)=2$ and (ii) $\ord(S^2)=1$, separately.

(i) Suppose $\ord(S^2)=2$. Write ${\rm Irr}(H^*)=\{\kk=V_0,V_1,\ldots, V_n\}$. Since $H$ is not pointed, $n\ge 3$ by Lemma \ref{noIRR} and $|G(H)|=1$ or $5$. The group $G(H)$ can be identified with the 1-dimensional $H^*$-modules $\kk_g$ for $g \in G(H)$.  We first claim that $H^*$ is also nonsemisimple. If not, then
\begin{equation*}
25=1 + \sum_{d_i \ge 2} d_i^2 \quad  \text{ or }\quad 25=5 + \sum_{d_i \ge 2} d_i^2
\end{equation*}
respectively for $|G(H)|=1$ or $5$. By  enumeration, $d_i \in \{2, 4\}$ and  there must be  $i$ such that $d_i =2$. Let $V \in \irr(H^*)$ such that $\dim(V)=2$. Since $H^*$ is semisimple, $[V^* \o V: \kk] =1$, and so
$V^* \o V = \kk \oplus W$ for some $H$-module $W$ of dimension $3$. Therefore, $W$ is not simple and  $[W:\kk_g] \ge 1$ for some nontrivial $g \in G(H)$.  Hence $[V: V \o \kk_g] = 1$ or, equivalently,   $V \o \kk_g \cong V$. Since $g$ generates $G(H)$, $G(H) \le \stab (V)$. By Lemma \ref{l:freeness}, $5  \mid \dim (V)$,  a contradiction! 
Therefore, $H^*$ is also nonsemisimple. Since $H$ is not pointed, $G(H)$ is also trivial by Theorem \ref{thm:basic}.

We next claim $H$ must have a nonzero primitive element. Suppose not. By Lemma \ref{lem:dext}, $\Ext_{H^*}^1(\kk,\kk)=0$. Then $[P(V_0): V_i] > 0$ for some $i >0$, say $i=1$.  If $\dim V_1 =2$, we may assume $V_2 = V_1^*$ since $H^*$ has no 2-dimensional self-dual simple module by Lemma \ref{4dimR}. Since $P(V_0)$ is also self-dual,  $[P(V_0): V_2] > 0$  and so $\dim P(V_0) \ge 6$. In this case, $[P(V_1): V_0], [P(V_2): V_0] \ge 1$. Therefore, 
$\dim P(V_1), \dim P(V_2) \ge 5$, and hence
$$
25 \ge  \dim P(V_0)+\sum_{i=1}^2 \dim (V_i) \dim P(V_i)  \ge 6 + 2 \cdot 5 \cdot 2 = 26,
$$
a contradiction! Therefore, $\dim V_1 \ge 3$. This implies that $\dim P(V_0) \ge 5$ and $\dim P(V_1) \ge 7$. Thus,
$$
25 \ge \dim P(V_0)+\dim (V_1) \dim P(V_1) = 5+3 \cdot 7 =26, \text{ a contradiction again!} 
$$
Therefore, $H$ must have a nonzero primitive element.

Let $x \in H$ be a nonzero primitive element. Then $\kk[x]$ is commutative Hopf subalgebra of $H$. Since $H^*$ is not pointed, $\kk[x]$ is of dimension $p$. It follows from Corollary \ref{cor:cad} that $\dim H_0 \le 5$. Since $\dim(V_i) \ge 2$ for $i\ge 1$ and $n\ge 3$,
$$
5 \ge \dim (H_0)=1 +\sum_{i=1}^n \dim(V_i)^2 \ge 1+ 3 \cdot 2^2 = 13,
$$
a contradiction! Therefore, $H$ or $H^*$ must be pointed if $\ord(S^2)=2$.

(ii) Suppose $S^2={\rm id}$. Write ${\rm Irr}(H)=\{\kk=V_0,V_1,\ldots,V_n\}$. Since $H^*$ is not pointed,  $\dim V_i\ge 2$ for $i > 0$ and $n \ge 3$ by Lemma \ref{noIRR}. Since $H$ is not semisimple, $\sum_{(\Lambda)} S(\Lambda_2)S^2(\Lambda_1)=\e(\Lambda)1=0$, where $\Lambda$ is a nontrivial left integral of $H$. By \cite[Lem. 3(b)]{RS}, $\tr(r(a) \circ S^2) =\tr(r(a))=0$ for all $a\in H$, where $r(a)$ denotes the operator on $H$ given by the right multiplication of $a$. As a consequence, for any indecomposable projective module $He$ for some primitive idempotent $e\in H$, we have $\dim He=\tr(r(e))=0$ in $\kk$, and hence $p \mid \dim He$. In particular, $\dim P(V_i) \ge p$. Now, we have
$$
p^2 = \sum_{i=0}^n \dim V_i \cdot \dim P(V_i) \ge (1+2n)p
$$
and so $n \le \frac{p-1}{2} = 2$, a contradiction.
Therefore, $H^*$ must be pointed if $S^2=\id$.
\end{proof}

According to \cite{WW14} or Theorem \ref{thm:p1}, the  $p^2$-dimensional pointed Hopf algebras are closed under duality, which is stated in Corollary \ref{c:dual_closure}. The following theorem is an immediate consequence of Theorem \ref{thm:5} and Corollary \ref{c:dual_closure}, which provides affirmative answers to both Questions A and B for $p \le 5$.
\begin{thm}
    Let $\kk$ be an algebraically closed field of positive characteristic $p \le 5$, and $H$ a $p^2$-dimensional Hopf algebra over $\kk$. Then $H$ is pointed.
\end{thm}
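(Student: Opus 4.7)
The proof proposal is essentially a one-line reduction to the preceding results, so the plan is short.

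First I would apply Theorem \ref{thm:5} to the given $p^2$-dimensional Hopf algebra $H$: this already guarantees that at least one of $H$ or $H^*$ is pointed. If $H$ itself is pointed, there is nothing more to prove. So the only remaining case is when $H^*$ is pointed but $H$ is not known a priori to be pointed.

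In that remaining case, my plan is to invoke the cited closure under duality, namely Corollary \ref{c:dual_closure} (which in turn rests on the classification of pointed $p^2$-dimensional Hopf algebras in \cite{WW14}, recorded as Theorem \ref{thm:p1}). Since $H^*$ is a pointed $p^2$-dimensional Hopf algebra, that corollary gives that $(H^*)^*$ is also pointed. Using the canonical Hopf algebra isomorphism $H \cong (H^*)^*$ in finite dimension, I conclude that $H$ is pointed.

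The only place there could be an obstacle is in the appeal to Corollary \ref{c:dual_closure}, but this is precisely the statement that pointedness is preserved under duality among $p^2$-dimensional Hopf algebras, so the step is formal once the classification is in hand. Combining the two inputs therefore yields the theorem, and this simultaneously answers Questions \ref{Q2} and \ref{Q3} affirmatively for $p\le 5$.
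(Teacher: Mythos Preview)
Your argument is correct and is exactly the paper's approach: the theorem is stated as an immediate consequence of Theorem~\ref{thm:5} and Corollary~\ref{c:dual_closure}, just as you outline. One minor slip: the classification of pointed $p^2$-dimensional Hopf algebras from \cite{WW14} is recorded as Theorem~\ref{thm:p2}, not Theorem~\ref{thm:p1} (which is the $p$-dimensional classification).
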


\section{Twist equivalence of cleft extensions}
In this section, we introduce the notion of twist equivalence for cleft extensions and crossed products. As an application, we classify certain cleft extensions, up to twist equivalence, over group algebras of elementary abelian $p$-groups and their duals over an algebraically closed field $\kk$ of characteristic $p$.  
The readers are referred to \cite[Ch. 7]{Mont93} for definitions and results on cleft extensions.

Let $B$ be a right $H$-comodule algebra and $A$ a subalgebra of $B$. The extension $A \subseteq B$ is called an {\it $H$-cleft extension} if $B^{co H}=A$ and there exists a right $H$-comodule map $\gamma: H\to B$ which is convolution invertible, and $\gamma$ is called a \emph{cleft map}.  By \cite[Thm. 2.2]{Sch}, if $\pi: A \to  H$ is a surjection of Hopf algebras,  then $A^{co \pi} \subset A$ is an $H$-cleft extension. In particular, for the short exact sequence \eqref{E:SES}, $A$ is an $H$-cleft extension of $\iota(K)$. 

According to \cite[Thm. 7.2.2]{Mont93}, $H$-cleft extensions can be characterized by crossed products, which we now recall.  Let $H$ be a Hopf algebra that {\it measures} an algebra $A$, namely there is a $\kk$-linear map $H\otimes A\to A$, given by $h\otimes a \mapsto h\cdot a$ such that $h\cdot 1=\epsilon(h)1$ and $h\cdot (ab)=\sum_{(h)} (h_1\cdot a)(h_2\cdot b)$, for all $h\in H$ and $a,b\in A$. In this paper, we simply call the triple $(H,A, \cdot)$ a \emph{measurement}. It is worth noting that $A$ may not be an $H$-module algebra as the $H$-action on $A$ may not be associative. 

Given a measurement  $(H,A,\cdot)$ and a convolution invertible map $\sigma \in \Hom_\kk(H \o H, A)$, one can construct the {\it crossed product} $A\#_\sigma H$ of $A$ with $H$, which is the vector space $A\otimes H$ equipped with multiplication given by 
 \begin{align}\label{mult}
     (a\#h)(b\#k)~=~\sum_{(h),(k)} a(h_1\cdot b)\sigma(h_2,k_1)\# h_3k_2
 \end{align}
 for all $h,k\in H$ and $a,b\in A$. In this case, the associativity of \eqref{mult} with identity element $1_A\#1_H$ is equivalent to the fact that $A$ is a {\it twisted} $H$-module and $\sigma$ is a 2-cocycle (see \cite[Lem. 7.1.2]{Mont93} and the definitions therein) of the measurement $(H,A, \cdot)$. Note that the crossed product $A\#_\sigma H$ canonically admits a left $A$-module and a right $H$-comodule structure. In particular, $A\#_\sigma H$ is an $H$-cleft extension of $A$  with the cleft map $\gamma: H \to A\#_\sigma H,\, h \mapsto 1_A\# h$, under the identification $A \equiv A\# 1_H$ (cf. \cite[Prop. 7.2.7]{Mont93}).

Recall that one can twist a left $A$-module $M$ by an algebra automorphism $\varphi$ of $A$ to $_\varphi \!M$. By duality, if $\phi$ is a coalgebra automorphism of a coalgebra $C$, and $V$ is a right $C$-comodule with the $C$-coaction $\rho: V \to V \o C$, $V^\phi$ denotes the right $C$-comodule with the $C$-coaction given by $(\id_V \o \phi)\circ\rho$. 
\begin{deff}
    Let $H$ be a Hopf algebra and  $A$ an  algebra.  
    \begin{enumerate}[label=\rm{(\roman*)}]
        \item   We say  two $H$-cleft extensions $B$, $B'$ of $A$  are \emph{twist equivalent}, if there exist an algebra automorphism $\varphi$ of $A$,   a Hopf algebra automorphism $\phi$ of $H$ and 
          an algebra isomorphism $f: B\to \!_\varphi (B')^{\phi}$ that is a left $A$-module and a right $H$-comodule map. In this case, $f$ is called a ($\phi, \varphi$)-\emph{twist isomorphism} or simply a \emph{twist isomorphism} of the $H$-cleft extensions.
        \item  Two crossed products $A\#_\sigma H$ and $A\#'_{\sigma'}  H$ are said to be \emph{twist equivalent} if they are twist equivalent as $H$-cleft extensions of $A$.
         \item  In particular, if $\varphi=\id_A$ and $\phi=\id_H$, we will say an {\it equivalent} (resp. {\it isomorphism}) instead of twist equivalent (resp. \emph{twist isomorphism}) of $H$-cleft extensions or crossed products as in (i) and (ii).
    \end{enumerate}     
\end{deff}
 One can easily see that twist equivalence is an equivalence relation on the set of all $H$-cleft extensions of $A$.

The following result is a natural extension of \cite[Thm. 7.3.4]{Mont93} to the context of twist equivalence. We include its proof for the sake of completeness.  
\begin{prop}\label{twistedE}
  Let $(H, A, \cdot)$ and $(H, A, \bullet)$ be measurements, $\sigma,\sigma'$ be 2-cocycles of the measurements respectively, and $A\#_\sigma H$, $A\#'_{\sigma'} H$  their corresponding crossed products. Suppose 
    \[
   \Phi:~A\#_\sigma H~\to~_\varphi(A\#'_{\sigma'} H)^{\phi}
    \]
    is a $(\phi, \varphi)$-twist isomorphism for some algebra automorphism $\varphi$ of $A$ and some Hopf automorphism $\phi$ of $H$. Then there exists a convolution invertible map $u\in \Hom_\kk(H,A)$ satisfying $u(1_H)=1_A$  with the convolution inverse $\ol u$ such that for all $a\in A$, $h,k\in H$,
    \begin{itemize}
\item[\rm(i)] $\Phi(a\#h)=\sum_{(h)} \varphi(au(h_1))\#' \phi^{-1}(h_2)$,
\item[\rm (ii)] $h\bullet a=\varphi\left(\sum_{(h)} \ol u(\phi(h_1))\,\left(\phi(h_2)\cdot \varphi^{-1}(a)\right)\, u(\phi(h_3))\right)$,
\item[\rm (iii)] $\sigma'(h,k)= m \left( \ol u (\phi(h_1)) \# \phi(h_2))(\ol u (\phi(k_1)) \# \phi(k_2))\right)$ where the $\kk$-linear map $m: A \#_\sigma H \to A$ is given by $m(a\# h) = \varphi(au(h))$.
\end{itemize}
    Conversely given a convolution invertible map $u\in \Hom_\kk(H,A)$ with $u(1_H)=1_A$ such that {\rm(ii)} and {\rm (iii)} hold, then the map $\Phi$ in {\rm (i)} is a $(\varphi,\phi)$-twist isomorphism.  
\end{prop}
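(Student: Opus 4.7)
The plan is to adapt the proof of \cite[Thm.~7.3.4]{Mont93} to the twisted setting, carefully bookkeeping the action of the twists $\varphi$ on the $A$-module structure and $\phi$ on the $H$-comodule structure. Write $B := A\#_\sigma H$ and $B' := A\#'_{\sigma'} H$ with their canonical cleft maps $\gamma(h) = 1\#h$ and $\gamma'(h) = 1\#'h$. The target $\,_\varphi(B')^\phi$ of $\Phi$ carries the twisted coaction $\rho^\phi = (\id\o\phi)\circ\rho'$, whose coinvariants are still $A$; the map $\tilde\gamma(h) := \gamma'(\phi^{-1}(h))$ serves as an $H$-cleft map for $\rho^\phi$, with convolution inverse $\tilde\gamma^{-1}$.

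For the forward direction, I first establish (i). Define $v: H \to B'$ by $v := (\Phi\circ\gamma) * \tilde\gamma^{-1}$ in $\Hom_\kk(H,B')$. Applying $\rho^\phi$ to $v(h)$ and using both the right $H$-colinearity of $\Phi$ and the standard formula $\rho^\phi(\tilde\gamma^{-1}(k)) = \sum \tilde\gamma^{-1}(k_2)\o S(k_1)$, a short computation with the iterated coproduct collapses to $\rho^\phi(v(h)) = v(h)\o 1$, so $v(h)$ is coinvariant and lies in $A$. Set $u := \varphi^{-1}\circ v$. Since $\Phi\circ\gamma = v * \tilde\gamma$, one reads off $\Phi(1\#h) = \sum_{(h)}\varphi(u(h_1))\#'\phi^{-1}(h_2)$, and the left $A$-linearity of $\Phi$ (with the $\varphi$-twist on the target) yields formula (i) for a general $a\#h$. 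Specializing at $h=1_H$ gives $u(1_H)=1_A$.

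Applying the same construction to the inverse $\Phi^{-1}$, a $(\phi^{-1},\varphi^{-1})$-twist isomorphism, produces $\bar u\in\Hom_\kk(H,A)$ with $\Phi^{-1}(a\#'l) = \sum \varphi^{-1}(a)\,\bar u(\phi(l_1))\#\phi(l_2)$. The identities $\Phi\circ\Phi^{-1}=\id=\Phi^{-1}\circ\Phi$ then force $u*\bar u = \epsilon\cdot 1_A = \bar u * u$, so $u$ is convolution invertible with inverse $\bar u$. For (ii) and (iii), observe that the crossed-product multiplication in $B'$ yields
\[
\sigma'(h,k) \,=\, (\id\o\epsilon)\bigl((1\#'h)(1\#'k)\bigr), \qquad h\bullet a \,=\, (\id\o\epsilon)\bigl((1\#'h)(a\#'1)\bigr).
\]
Because $\Phi$ is an algebra isomorphism, $(\id\o\epsilon)(x) = m(\Phi^{-1}(x))$ where $m := (\id\o\epsilon)\circ\Phi$; a direct calculation from (i) shows $m(b\#l) = \varphi(b\,u(l))$, exactly the map $m$ in the statement. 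Substituting the explicit formula for $\Phi^{-1}$ into the two displayed identities then yields (iii) and (ii), respectively.

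For the converse, one defines $\Phi$ by formula (i) and checks the required properties. Unitality follows from $u(1_H)=1_A$; left $A$-linearity with the $\varphi$-twist and right $H$-colinearity with the $\phi$-twist are immediate from (i); bijectivity follows by exhibiting the explicit inverse $\Phi^{-1}(a\#'l) := \sum \varphi^{-1}(a)\bar u(\phi(l_1))\#\phi(l_2)$, whose two-sidedness reduces to the convolution identities $u*\bar u=\epsilon\cdot 1_A=\bar u *u$. Multiplicativity reduces, via $(a\#h)=(a\#1)(1\#h)$, to the four identities $\Phi((a\#1)(b\#1))=\Phi(a\#1)\Phi(b\#1)$, $\Phi((a\#1)(1\#h))=\Phi(a\#1)\Phi(1\#h)$, $\Phi((1\#h)(a\#1))=\Phi(1\#h)\Phi(a\#1)$, and $\Phi((1\#h)(1\#k))=\Phi(1\#h)\Phi(1\#k)$; the first two are trivial consequences of (i), and the third and fourth unpack, after convolving with $\varphi\circ\bar u\circ\phi$ to invert a leading factor, into precisely (ii) and (iii) respectively. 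The main obstacle is not conceptual but the careful juggling of Sweedler indices across iterated coproducts together with the twists $\varphi^{\pm 1}$ and $\phi^{\pm 1}$ (using that $\phi$ is antipode-preserving); the key simplifying insight is the clean closed form of $\Phi^{-1}$, which transforms all multiplicative conditions on $\Phi$ into tidy convolution identities in $\Hom_\kk(H,A)$.
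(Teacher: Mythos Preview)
Your proposal is correct and follows essentially the same approach as the paper: both extract $u$ from $\Phi(1\#h)$ via the comodule structure, obtain $\bar u$ from $\Phi^{-1}$ by symmetry, and then read off (ii) and (iii) from the multiplicativity of $\Phi^{-1}$ on the products $(1\#'h)(a\#'1)$ and $(1\#'h)(1\#'k)$. The only differences are cosmetic: the paper defines $u(h)=(\varphi^{-1}\otimes\epsilon)\Phi(1\#h)$ directly rather than via your convolution $v=(\Phi\circ\gamma)*\tilde\gamma^{-1}$, and it expands the computations for (ii), (iii) explicitly rather than packaging them through your identity $(\id\otimes\epsilon)=m\circ\Phi^{-1}$; your formulation is slightly cleaner but the underlying argument is identical.
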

\begin{proof}
We define $u\in \Hom_\kk(H,A)$ by $u(h)=(\varphi^{-1}\otimes \epsilon)\Phi(1_A\# h)$ for $h\in H$. Then  $\varphi(u(h))=(\id\otimes \epsilon)\Phi(1_A\# h)$ and $\Phi(a\# h)=\varphi(a)\Phi(1_A\#h)$ as $\Phi$ is a left $A$-module map. Since $\Phi$ is a right  $H$-comodule map, we have 
$$
(\id\otimes \Delta)\circ \Phi=(\Phi \otimes \phi^{-1})\circ (\id \otimes \Delta)\,.
$$
By applying $(\id \o m_H)\circ (\id \otimes \epsilon 1_H \otimes \id)$ to both sides of this equation, we get
\begin{align*}
\Phi(1\#h)&=\,(\id \o m_H)\circ (\id \otimes \epsilon 1_H \otimes \id)\left(\sum \Phi(1_A\# h_1) \o \phi^{-1}(h_2)\right)\\
&=\,\sum (\id \o m_H) (\varphi(u(h_1))\#'1_H \o \phi^{-1}(h_2))\\
&=\, \sum \varphi(u(h_1))\#' \phi^{-1}(h_2),
\end{align*}
which proves the formula in (i). Note that $\Phi(1_A\#1_H)=1_A\#1_H$ implies that $u(1_H)=1_A$.

It is immediate to see that  $\Phi^{-1}: A\#'_{\sigma'} H\to {}_{\varphi^{-1}}(A\#_\sigma H)^{\phi^{-1}}$ is a $(\varphi^{-1},\phi^{-1})$-twist isomorphism. By (i), there exists  $v\in \Hom_\kk(H,A)$ given by $v(h)=(\varphi\otimes \epsilon)\Phi^{-1}(1\#' h)$ such that $\Phi^{-1}(a\#' h)=\sum \varphi^{-1}(av(h_1))\# \phi(h_2)$. We claim that $\ol u=\varphi^{-1}\circ v\circ \phi$ is the convolution inverse of $u$. This is because
\begin{align*}
1_A\#h=\Phi^{-1}\Phi(1_A\#h)& =\Phi^{-1}\left(\sum \varphi(u(h_1))\#' \phi^{-1}(h_2)\right)\\
& =\sum u(h_1)\varphi^{-1}(v(\phi^{-1}(h_2)))\# h_3.
\end{align*}
By applying $\id_A \otimes \epsilon$ to this equation, we see that $u*(\varphi^{-1}\circ v\circ \phi^{-1})=\epsilon 1_A$ in $\Hom_\kk(H,A)$. Similarly, we see $v*(\varphi\circ u\circ \phi)=\epsilon 1_A$, which is equivalent to $(\varphi^{-1}\circ v \circ \phi^{-1})* u=\epsilon 1_A$. Thus,  $\ol u=\varphi^{-1}\circ v\circ \phi^{-1}$ is the convolution inverse of $u$. 

Now we consider the equation $\Phi^{-1}((1_A\#' h)(1_A\#'k))=\Phi^{-1}(1_A\#' h)\Phi^{-1}(1_A\#'k)$. We apply \eqref{mult} to obtain  
\begin{align*}
&\sum\varphi^{-1}\left(\sigma'(h_1,k_1)v(h_2k_2)\right)\# \phi(h_3k_3)\\
=&\sum \varphi^{-1}(v(h_1))\left(\phi(h_2)\cdot \varphi^{-1}(v(k_1))\right)\sigma(\phi(h_3),\phi(k_2))\# \phi(h_4k_3).
\end{align*}
This proves (iii) after applying $m$ to the equation.

Applying $\id\otimes \epsilon$ to  $\Phi^{-1}((1_A\#' h)(b\#'1_H))=\Phi^{-1}(1_A\#' h)\Phi^{-1}(b\#1_H)$ or
$$
\sum\varphi^{-1}\left((h_1\bullet b)v(h_2)\right)\# \phi(h_3)
=\sum \varphi^{-1}(v(h_1))\left(\phi(h_2)\cdot \varphi^{-1}(b)\right)\# \phi(h_3),
$$
we find
\begin{align*}
\sum \varphi^{-1}(h_1\bullet b)\varphi^{-1}(v(h_2))=\varphi^{-1}(v(h_1))\left(\phi(h_2)\cdot \varphi^{-1}(b)\right).
\end{align*}
Note that $\ol u\circ \phi=\varphi^{-1}\circ v$ and $\varphi^{-1}\circ v$ is the convolution inverse of $u \circ \phi$. Inverting $\varphi^{-1}\circ v$ gives (ii). 

Finally, the converse follows by direct computation. 
\end{proof}

Throughout the remainder of this section, we will assume that $H$ is a cocommutative Hopf algebra and $A$ is a commutative algebra. In this setting, a crossed product $A\#_\sigma H$ is defined as an $H$-module algebra $A$ equipped with a 2-cocycle $\sigma: H \otimes H \rightarrow A$, which can be placed within the context of Sweedler's cohomology $H^\bullet_{sw}(H,A)$, as described in \cite{Sw68}. In particular, the measurement $(H,A, \cdot)$ simply means an $H$-module algebra. 

For any integer $n\ge 0$, denote by ${\rm Reg}^n(H,A)\subseteq \Hom_\kk(H^{\otimes n},A)$ the set of all convolution invertible linear maps, which forms an abelian subgroup under the convolution product.  The coface operator $d^i:\Reg^n(H,A) \to \Reg^n(H,A)$ is defined  as follows:
\[
d^i(f)(h_0\o \cdots\o h_n)=
\begin{cases}
h_0f(h_1\o\cdots\o h_n) & \text{for}\ i=0,\\
f(h_0\o \cdots\o h_{i-1}h_i\o \cdots\o h_n) & \text{for}\ 1\le i\le n,\\
f(h_0\o \cdots\o h_{n-1})\epsilon(h_n) & \text{for}\ i=n+1
\end{cases}
\]
for any $f\in {\rm Reg}^n(H,A)$ and $h_0, \dots, h_n \in H$. This gives rise to a cochain complex $({\rm Reg}^\bullet(H,A),\partial)$ whose differential $\partial$ is defined by 
\[
\partial(f)=d^0(f)*d^1(f^{-1})*\cdots *d^{n+1}(f^{\pm 1})
\]
for any $f\in {\rm Reg}^n(H,A)$ where $*$ denotes the convolution product and $f^{-1}$ means the convolution inverse of $f$. The {\it Sweedler cohomology $H_{sw}^\bullet(H,A)$ of $H$ with coefficients in $A$} is defined to be the cohomology of the above cochain complex. In particular,
\[
H_{sw}^n(H,A):={\rm Ker}\, \partial^n/{\rm Im}\, \partial^{n-1},
\]
for any $n\ge 0$ under the convention $\partial^{-1}$ is trivial.

As noted in \cite{Sw68}, one can use the normalized  subcomplex $({\rm Reg}_+^\bullet(H,A),\partial)$ of $({\rm Reg}^\bullet(H,A),\partial)$ to compute $H_{sw}^\bullet(H,A)$, where
${\rm Reg}_+^n(H,A)$ consists of those convolution invertible  maps $f\in \Hom_\kk(H^{\o n}, A)$ satisfying $f(x_1 \o \cdots \o x_n) = \e(x_1 \cdots x_n) 1_A$ whenever $1_H \in \{x_1, \dots, x_n\}$.
 In particular, $\sigma\in {\rm Reg}_+^2(H,A)$ is a 2-cocycle if and only if $d^0(\sigma) * d^1(\sigma^{-1})*d^2(\sigma) *d^3(\sigma^{-1})=\epsilon\otimes \epsilon \o \e \, 1_A$. Since  ${\rm Reg}_+^2(H,A)$ is an abelian group, $\sigma\in {\rm Reg}_+^2(H,A)$ is a 2-cocycle if and only if
\begin{align}\label{eq:2cocycle}
\sum_{(h),\, (k),\, (m)} h_1\cdot\sigma(k_1,m_1)\,\sigma(h_2,k_2m_2)=\sum_{(h),\, (k)} \sigma(h_1,k_1)\,\sigma(h_2k_2,m)
\end{align}
for any $h,k,m\in H$.  

The first assertion of our following result is well-known since Sweedler's second cohomology classifies all cleft extensions up to equivalence (see \cite[Thm. 8.6]{Sw68}).  

\begin{prop}\label{lem:autS}
       Let $H$ be a cocommutative Hopf algebra and $A$ a commutative $H$-mdoule algebra. 
       \begin{enumerate}
           \item[\rm (i)] If $H_{sw}^2(H,A)$ is trivial, then the crossed product $A\#_\sigma H$ for any Sweedler 2-cocycle $\sigma$ is equivalent to the smash product $A\#H$. 
           \item[\rm (ii)] Suppose $H_{sw}^2(H,A)$ is trivial for any $H$-module algebra structure on $A$. Then every right $H$-cleft extension of $A$ is equivalent to a smash product $A \# H$, where the $H$-module action on $A$ is determined by a cleft map.
           \item[\rm (iii)] Suppose $H_{sw}^2(H,A)$ is trivial for any $H$-module algebra structure on $A$. For any two $H$-actions $\cdot, \bullet: H\otimes A\to A$, the corresponding smash products $A\#H$ and $A\#' H$ are twist equivalent if and only if there is some algebra automorphism $\varphi$ of $A$ and some Hopf automorphism $\phi$ of $H$ such that
       \begin{equation}\label{action}
             h\bullet  a~=~\varphi\left(\phi(h)\cdot \varphi^{-1}(a)\right)\,  
       \end{equation}    
       for all $h\in H$ and $a\in A$.
       \end{enumerate}
\end{prop}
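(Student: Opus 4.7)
The plan is to dispatch (i) and (ii) by combining the converse direction of Proposition~\ref{twistedE} with Montgomery's crossed-product correspondence \cite[Thm.~7.2.2]{Mont93}, and to treat (iii) by constructing an explicit twist isomorphism for the ``if'' direction while invoking Proposition~\ref{twistedE} for the ``only if'' direction. The common technical device is the interplay between commutativity of $A$ and cocommutativity of $H$, which allows one to absorb 1-cochain data using the convolution identity $\ol u * u = \e\,1_A$.

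For (i), triviality of $H^2_{sw}(H,A)$ writes any 2-cocycle as $\sigma = \partial u$ for some $u \in \Reg_+^1(H,A)$. I take $\phi = \id_H$ and $\varphi = \id_A$ in Proposition~\ref{twistedE} and set $\Phi(a\#h) = \sum_{(h)} au(h_1) \#' h_2$. Condition (ii) of that proposition asks $h \cdot a = \sum \ol u(h_1)(h_2 \cdot a) u(h_3)$; commutativity of $A$ permits moving $u(h_3)$ leftward past $h_2 \cdot a$, cocommutativity of $H$ swaps the middle two tensor positions of $\Delta^2(h)$, and then coassociativity together with $\ol u * u = \e\,1_A$ collapses $\sum \ol u(h_1)u(h_2)$ against $h_3 \cdot a$ to recover $h \cdot a$. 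Condition (iii) expands via the crossed-product multiplication \eqref{mult} and, after the same sort of rearrangement, becomes the coboundary identity $\sigma = \partial u$ in Sweedler's convention. For (ii), Montgomery's theorem identifies a cleft extension $B$ with a crossed product $A \#_\sigma H$ built from a cleft map; triviality of $H^2_{sw}$ for the $H$-module action determined by the cleft map then permits applying (i) to yield an equivalence $B \simeq A \# H$.

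For (iii), the ``if'' direction is explicit: define $\Phi(a\#h) = \varphi(a) \#' \phi^{-1}(h)$. Left $A$-linearity with respect to $\varphi$ and right $H$-colinearity with respect to $\phi$ follow directly from the definitions and the fact that $\phi$ is a Hopf algebra automorphism. The algebra property reduces, by comparing $\Phi((1\#h)(b\#1))$ with $\Phi(1\#h)\Phi(b\#1)$, to the identity $\phi^{-1}(h) \bullet \varphi(b) = \varphi(h \cdot b)$, which is \eqref{action} with $a = \varphi(b)$. For the ``only if'' direction, Proposition~\ref{twistedE} yields a convolution invertible $u: H \to A$ with $u(1_H) = 1_A$ satisfying
\[
h \bullet a ~=~ \varphi\Bigl(\sum_{(h)} \ol u(\phi(h_1))\bigl(\phi(h_2) \cdot \varphi^{-1}(a)\bigr) u(\phi(h_3))\Bigr).
\]
Commutativity of $A$ moves $u(\phi(h_3))$ past $\phi(h_2) \cdot \varphi^{-1}(a)$, cocommutativity of $H$ interchanges the second and third positions of $\Delta^2(\phi(h))$, and the identity $\ol u * u = \e\,1_A$ together with coassociativity collapses $\sum \ol u(\phi(h_1))u(\phi(h_2))$ against $\phi(h_3) \cdot \varphi^{-1}(a)$ to produce $\phi(h) \cdot \varphi^{-1}(a)$. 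Applying $\varphi$ then gives \eqref{action}.

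The main obstacle lies in the Sweedler bookkeeping: one must pinpoint where cocommutativity of $H$ enters to permute tensor factors of an iterated comultiplication, and combine it with commutativity of $A$ and with $\ol u * u = \e\,1_A$ so that the 1-cochain $u$ cancels. Without these commutativity and cocommutativity hypotheses, \eqref{action} genuinely acquires an inner-type correction by $u$, so the clean conclusion of (iii) is special to the present setting.
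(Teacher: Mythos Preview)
Your proposal is correct and follows essentially the same approach as the paper's proof: both dispatch (i) via Proposition~\ref{twistedE} with $\varphi=\phi=\id$ and the explicit map $a\#h\mapsto\sum au(h_1)\#'h_2$, both reduce (ii) to (i) via \cite[Thm.~7.2.2]{Mont93}, and both handle (iii) through Proposition~\ref{twistedE} together with the commutativity/cocommutativity simplification $\sum\ol u(h_1)(h_2\cdot a)u(h_3)=h\cdot a$. The only cosmetic difference is that for the ``if'' direction of (iii) you write out $\Phi(a\#h)=\varphi(a)\#'\phi^{-1}(h)$ explicitly, whereas the paper obtains the same map by taking $u=\e\,1_A$ in the converse of Proposition~\ref{twistedE}.
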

\begin{proof} 
(i) Since $H^2_{sw}(H,A)$ is trivial, the Sweedler 2-cocycle $\sigma: H\otimes H\to A$ is a 2-coboundary, namely there is some $u\in \Reg^1_+(H,A)$ such that $u(1_H)=1_A$ and
\begin{align*}
\sigma(h,k)~=~\sum \left(h_1\cdot u(k_1) \right)\, \ol u(h_2k_2) \, u(h_3),   
\end{align*}
where $\ol u$ is the convolution inverse of $u$. Then by applying Proposition \ref{twistedE} in case of $\varphi,\phi=\id$, one can check that $f: A\#_\sigma H \to A\# H$ defined by $f(a\# h)=\sum_{(h)} au(h_1)\#h_2$  with inverse $f^{-1}(a\#h)= \sum_{(h)} a\ol u(h_1)\#h_2$ for any $a\in A$ and $h\in H$ is an isomorphism of $H$-cleft extensions.

(ii) By \cite[Thm. 7.2.2]{Mont93}, every $H$-cleft extension of $A$ is equivalent to $A \#_\s H$ as $H$-cleft extensions of $A$ for some Sweedler's 2-cocycle $\s \in \Reg^2_+(H,A)$ of the $H$-module algebra $A$ with its $H$-action determined by a cleft map.  Since $H^2_{sw}(H,A)$ is trivial, the statement follows from (i).

(iii) Suppose $\Phi: A\#H\to \!_\varphi(A\#' H)^\phi$ is a twist isomorphism of smash products for some algebra automorphism $\varphi$ of $A$ and some Hopf algebra automorphism $\phi$ of $H$. By Theorem \ref{twistedE}, there is some $u\in \Reg^1_+(H,A)$ with $u(1_H)=1_A$ and convolution inverse $\ol u$ such that 
$$
    h\bullet a=\varphi\left(\sum_{(h)} \ol u(\phi(h_1))\,\left(\phi(h_2)\cdot \varphi^{-1}(a)\right)\, u(\phi(h_3))\right)
    =\varphi\left(\phi(h)\cdot \varphi^{-1}(a)\right),
$$
where the last equality follows from the cocommutativity of $H$ and the commutativity of $A$. Moreover, by letting $u=\epsilon 1_A$ and $\sigma=\sigma'=\epsilon 1_A\otimes \epsilon 1_A$, one can check (iii) in Theorem \ref{twistedE} holds. This implies our result. 
\end{proof}

It was observed in \cite[Thm. 5.10]{Gui} and \cite[Prop. 5.7]{EG21} that  $H^2_{sw}(\kk[C_p^n]^*,\kk)$ is trivial for any positive integer $n$. Our next result extends this vanishing property to nontrivial coefficients. 

\begin{thm}\label{thm:zero2nd}
Let $\kk$ be an algebraically closed field of characteristic $p>0$, and $K=\kk [C_p^n]^*$ for some positive integer $n$.  For any finite-dimensional commutative $K$-module algebra $A$ over $\kk$,  $H^2_{sw}(K,A)$ is trivial. 
\end{thm}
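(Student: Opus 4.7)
The plan is to prove the vanishing by induction on $n$, exploiting the decomposition $K = \kk[C_p^n]^* \cong \kk[C_p]^{*\otimes n}$ as Hopf algebras. First the translation: since $K^* = \kk[C_p^n]$ is the group algebra of $G := C_p^n$, a left $K$-module algebra structure on a commutative $A$ corresponds, via duality, to a $G$-grading $A = \bigoplus_{g \in G} A_g$ compatible with multiplication ($A_g A_h \subseteq A_{gh}$); writing $\{p_g : g \in G\}$ for the dual basis of primitive orthogonal idempotents of $K$, the $K$-action is $p_g \cdot a_h = \delta_{g,h}\, a_h$ for $a_h \in A_h$. A normalized 2-cocycle $\sigma \in \Reg^2_+(K, A)$ is then encoded by a family $\sigma_{g,h} := \sigma(p_g, p_h) \in A$, subject to normalizations $\sigma_{g,1} = \sigma_{1,g} = \delta_{g,1}\, 1_A$ and the cocycle identity \eqref{eq:2cocycle}.

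For the base case $n = 1$, write $K_1 = \bigoplus_{i \in \F_p} \kk e_i$ and $A = \bigoplus_{i \in \F_p} A_i$. Expanding each entry $\sigma_{i,j}$ into its graded components $\sigma_{i,j} = \sum_\ell \sigma_{i,j}^{(\ell)}$ with $\sigma_{i,j}^{(\ell)} \in A_\ell$ and unpacking the cocycle identity \eqref{eq:2cocycle} on triples $(e_i, e_j, e_k)$ yields a family of explicit relations among these components. I intend to use these relations, together with the convolution invertibility of $\sigma$, to construct a normalized $u \in \Reg^1_+(K_1, A)$ (determined by its values $u(e_i) \in A$, normalized by $u(e_0) = 1_A$) such that $\sigma = \partial u$. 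Solving this concrete system in the graded components of $A$ is the computational core of the base case.

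For the inductive step, write $K \cong K_{n-1} \otimes K_1$ with $K_{n-1} := \kk[C_p^{n-1}]^*$, viewed as a split short exact sequence $1 \to K_{n-1} \to K \to K_1 \to 1$ of cocommutative Hopf algebras. Given a cocycle $\sigma \in \Reg^2_+(K, A)$, its restriction $\sigma|_{K_{n-1} \otimes K_{n-1}}$ is itself a 2-cocycle, hence a coboundary $\partial u_0$ for some $u_0 \in \Reg^1_+(K_{n-1}, A)$ by the inductive hypothesis. Extending $u_0$ to $K$ via $\tilde u_0 := u_0 \otimes \e$ and replacing $\sigma$ by $\sigma * (\partial \tilde u_0)^{-1}$, I may assume $\sigma$ is trivial on $K_{n-1} \otimes K_{n-1}$. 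The base case, applied to the $C_p^{n-1}$-degree-$0$ subalgebra $A^{(0)} := \bigoplus_{i \in \F_p} A_{(0,i)}$ (a commutative $K_1$-module algebra via the restricted grading), then trivializes the $K_1 \otimes K_1$-restriction. The residual ``mixed'' content of $\sigma$ on $K_{n-1} \otimes K_1$ and $K_1 \otimes K_{n-1}$ is controlled by an $H^1_{sw}$-type obstruction, which is killed by one further application of the inductive hypothesis.

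The main obstacle is bookkeeping this induction cleanly: ensuring that each successive coboundary modification preserves the normalizations already achieved, and identifying the residual mixed content as genuinely an $H^1$-obstruction that the inductive hypothesis can absorb. A cleaner organizational device would be a Lyndon-Hochschild-Serre-type spectral sequence for Sweedler cohomology of a split extension of cocommutative Hopf algebras, with $E_2^{p,q} = H^p_{sw}(K_1, H^q_{sw}(K_{n-1}, A))$ abutting to $H^{p+q}_{sw}(K, A)$; inductive vanishing of the inner cohomologies in degrees $q \le 2$ would then collapse the sequence onto $H^2_{sw}(K_1, A^{(0)})$, which vanishes by the base case. Establishing this spectral sequence in sufficient generality — with commutative but not necessarily semisimple coefficients $A$ — is the technical step requiring the most care.
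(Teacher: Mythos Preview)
Your approach is genuinely different from the paper's and, as written, has a real gap. The paper does \emph{not} induct on $n$ via the tensor decomposition $K\cong K_1^{\otimes n}$; instead it uses Hochschild's theorem to write $K=\kk[x]/(x^{p^n}-x)$ with a \emph{single} primitive generator $x$, and works directly with the monomial basis $\{x^i\}$. The argument is: (a) recursively choose $f(x),f(x^2),\dots,f(x^{p^n-1})$ so that $(\sigma*\partial f^{-1})(x^i\otimes x)=0$ for $0\le i\le p^n-2$; (b) the cocycle identity then forces $\sigma(x^{p^n-1},x)\in A^K$; (c) the last condition $(\sigma*\partial f^{-1})(x^{p^n-1}\otimes x)=0$ becomes the Artin--Schreier equation $X^{p^n}-X=\sigma(x^{p^n-1},x)$ in $A^K$, solved by Hensel's lemma over the local factors of $A^K/J(A^K)$ --- this is precisely where algebraic closedness enters; (d) once $\sigma(x^i,x)=0$ for all $i$, a short induction using the cocycle relation with $h=x$ kills $\sigma$ entirely. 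No spectral sequence, no $H^1$ input.

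The gap in your plan is the sentence ``the residual mixed content\dots is controlled by an $H^1_{sw}$-type obstruction, which is killed by one further application of the inductive hypothesis.'' Your inductive hypothesis is about $H^2_{sw}(K_{n-1},-)$, not $H^1_{sw}$. In the spectral-sequence language you invoke, the term $E_2^{1,1}=H^1_{sw}\bigl(K_1,\,H^1_{sw}(K_{n-1},A)\bigr)$ contributes to $H^2_{sw}(K,A)$, and nothing you have assumed makes $H^1_{sw}(K_{n-1},A)$ vanish. You would need to strengthen the induction to include $H^1_{sw}(K_m,A)=0$ for all $m<n$ and all commutative finite-dimensional $K_m$-module algebras $A$ --- a separate statement requiring its own proof (and its own use of algebraic closure). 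Relatedly, your base case is only sketched: you never isolate the single obstruction whose solvability requires $\kk=\overline\kk$, and in the idempotent basis $\{e_i\}$ that obstruction is much less visible than in the $x$-basis. If you want to salvage the inductive route, prove $H^1_{sw}(\kk[C_p]^*,A)=0$ first (it reduces to solving $X^p-X=c$ in $A^K$, same Hensel step) and carry both vanishing statements through the induction; otherwise, the paper's single-generator computation is both shorter and pinpoints exactly where the hypothesis on $\kk$ is used.
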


\begin{proof}
Note that $\kk[C_p^n]^*$ is a connected semisimple Hopf algebra. By the Hochschild theorem \cite{Ho54}, we have $\kk[C_p^n]^* \cong \kk[x]/(x^{p^n}-x)$ as Hopf algebras, where $x$ is primitive. Now, we simply assume $K=\kk[x]/(x^{p^n}-x)$ with $x$ primitive. Suppose $\sigma\in {\rm Reg}_+^2(K,A)$. Then $\sigma(1\o 1)=1$ and $\sigma(x^i\o  1) = \sigma(1\o  x^i)=0$ for $i \ge 1$. We claim that there exists $f\in {\rm Reg}_+^1(K,A)$ such that 
\begin{align}\label{eq:cond1}
    (\sigma*\partial f^{-1})(x^i\o x)=0,
\end{align}
for $0\le i\le p^n-1$. We proceed with the proof of the above claim in several steps.

(i) We first find $f\in {\rm Reg}_+^1(K,A)$, where \eqref{eq:cond1} holds for $0\le i\le p^n-2$.  Note that \eqref{eq:cond1} holds trivially for $i=0$ and one can rewrite \eqref{eq:cond1} as $(\sigma * d^1f)(x^i\o x)=(d^0f*d^2f)(x^i\o x)$ or 
\begin{equation}\label{eq:cob1}
f(x^{i+1})=-\sum_{j=1}^i{i\choose j}\,\sigma(x^j\o x)f(x^{i-j})+\sum_{j=0}^i\, {i\choose j}\, (x^j\cdot f(x))f(x^{i-j})
\end{equation}
for $1\le i\le p^n-2$. For any $a \in A$, we can define $f(1)=1$, $f(x)=a$, and  set $f(x^{i+1})$ inductively by \eqref{eq:cob1} for $1\leq i\leq p^n-2$. Then \eqref{eq:cond1} holds for $0\le i\le p^n-2$.

(ii) By (i), we may assume $\sigma(x^i\o x)=0$ for all $0\le i\le p^n-2$. Then we set $h=x,k=x^{p^n-1},m=x$ in \eqref{eq:2cocycle} to get
\begin{align*}
\sum_{i,l =0}^1 \sum_{j=0}^{p^n-1} \binom{p^n-1}{j}(x^i\cdot \sigma(x^j, x^l)) \sigma(x^{1-i}, x^{p^n-j-l})\\
=\sum_{i =0}^1 \sum_{j=0}^{p^n-1} \binom{p^n-1}{j} \sigma(x^i, x^j) \sigma(x^{p^n-i-j}, x)\,,
\end{align*}
which is equivalent to 
\[
x\cdot \sigma(x^{p^n-1}, x)+\sigma(x,x) = \sigma(x,x)\,.
\]
Therefore, $x\cdot \sigma(x^{p^n-1},x)=0$, namely, $\sigma(x^{p^n-1},x)\in A^K$. 

(iii) Now for any $f\in {\rm Reg}_+^1(K,A^K)$ that satisfies \eqref{eq:cond1} for $0\leq i\leq p^n-2$, \eqref{eq:cob1} yields $f(x^{i+1})=f(x)^{i+1}$ for $0\le i\le p^n-2$ and when $i=p^n-1$ it is equal to 
\[
f(x)=f(x^{p^n})=-\sigma(x^{p^n-1}\o x)+f(x)f(x^{p^n-1})=-\sigma(x^{p^n-1}\o x)+f(x)^{p^n}.
\]

(iv) We show that the polynomial equation $F(X) = 0$, where $F(X)=X^{p^n}-X-\sigma(x^{p^n-1},x) \in A^K[X]$ has a solution in $A^K$. Since $A^K$ is finite-dimensional commutative over an algebraically closed field $\kk$, the equation $F(X)$ has solution in $A^K/J(A^K)=\kk\times \cdots \times \kk$, where $J(A^K)$ denotes the Jacobson radical of $A^K$. Note $A^K$ is complete with respect to its $J(A^K)$-adic filtration.  Since $F'(X)=-1 \ne 0$ in $A^K$, we conclude that $F(X)=0$ has a solution in $A^K$ by Hensel's lifting lemma. We choose $\alpha\in A^K$ as a solution that $F(\alpha)=0$. 

(v) We construct $f\in {\rm Reg}_+^1(K,A^K)$ by setting $f(1)=1$, $f(x)=\alpha$ and $f(x^{i+1}) = f(x)^{i+1}$ for $1\leq i\leq p^n-2$. By the previous discussion, it is straightforward to check that $(\sigma * \partial f^{-1})(x^i,x)=0$ for all $0\le i\le p^n-1$. This proves our claim. 

Now up to a coboundary, we can assume that  $\sigma(x^t,x)=0$ for all nonnegative integers $t$. By setting $h=x,k=x^t,m=x$ in \eqref{eq:2cocycle} again, we find
\begin{align*}
\sum_{i,l =0}^1 \sum_{j=0}^{t} \binom{t}{j}(x^i\cdot \sigma(x^j, x^l)) \sigma(x^{1-i}, x^{t-j-l+1})\\
=\sum_{i =0}^1 \sum_{j=0}^{t} \binom{t}{j}\sigma(x^i, x^j) \sigma(x^{t-i-j+1}, x)\,,
\end{align*}
which implies
that  $\sigma(x,x^{t+1})=0$. Therefore, $\sigma(x, m)=0$ for all $m \in K$. Inductively, suppose $t$ is a positive integer such that $\sigma(x^j,m)=0$  for all $m \in K$ whenever $1 \le j \le t$.  By setting $h=x,k=x^t$ in \eqref{eq:2cocycle}, we find
\[
x \cdot \sigma(x^t, m) = \sigma(x^{t+1}, m) 
\]
for all $m \in K$. Therefore, $\sigma(x^{t+1}, m)=0$ for all $m \in K$. Thus, $\sigma$ is trivial. This completes our proof.   
\end{proof}

\begin{remark}
   The assumption that $\kk$ is algebraically closed is essential for Theorem  \ref{thm:zero2nd}. For instance, if $\kk$ is of characteristic $p=2$ (not necessarily algebraically closed), then \cite[Thm. 1.1]{Gui} states that $H^2_{sw}(\kk[C_p^n]^*,\kk)=(\kk/\{x+x^2\,|\, x\in \kk\})^{\oplus n}$.
\end{remark}

An immediate application of Theorem \ref{thm:zero2nd} is the classification of certain cleft extensions up to twist equivalence over $\kk[C_p^n]$ and their duals.

\begin{prop}\label{CleftCp}
Let $\kk$ be an algebraically closed field of characteristic $p>0$, and $H=\kk [C_p^n]^*$ for some positive integer $n$. Then any $H$-cleft extension of a finite-dimensional commutative local algebra $A$ over $\kk$ is twist equivalent to some smash product $A\# H$. Moreover, there are exactly 3 twist inequivalent smash products $A\# H$, $A\#' H$, and  $A\#''H$, and their $H$-actions on $A$ are respectively given by  \[
x\cdot t~=~0,\ t,\ t+1,
\]
where $x$ is a primitive generator of $H$ satisfying $x^{p^n}=x$, and $t$ is a nilpotent generator of $A$. The last action $x\cdot t=t+1$ only occurs when $p\mid \dim A$. 
\end{prop}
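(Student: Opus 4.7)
The first assertion is an immediate consequence of Theorem \ref{thm:zero2nd} combined with Proposition \ref{lem:autS}(ii): since $H^2_{sw}(H,A')=0$ for every $H$-module algebra structure $A'$ on the commutative algebra $A$, every $H$-cleft extension of $A$ is equivalent, hence twist equivalent, to some smash product $A\# H$.

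For the classification of smash products, I observe that an $H$-module algebra structure on $A=\kk[t]/(t^m)$ corresponds to a $\kk$-derivation $\delta=x\cdot:A\to A$ subject to $\delta^{p^n}=\delta$ (from $x^{p^n}=x$) and $\delta(t^m)=m t^{m-1}\delta(t)=0$ in $A$. Since $T^{p^n}-T$ splits into distinct linear factors over $\F_{p^n}\subseteq\kk$, $\delta$ is semisimple with eigenvalues in $\F_{p^n}$, so $A=\bigoplus_{\lambda\in\F_{p^n}}A_\lambda$. The classification splits into three cases according to the interaction of $\delta$ with $\fm=(t)$: (i) $\delta=0$; (ii) $\delta\ne 0$ and $\delta(\fm)\subseteq\fm$; (iii) $\delta(\fm)\not\subseteq\fm$.

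In case (ii), $\fm$ is $\delta$-invariant, so $\delta$ acts on $\fm/\fm^2\cong\kk\bar t$ by multiplication by some $\lambda\in\F_{p^n}$, which must be nonzero (otherwise $\delta(\fm^k)\subseteq\fm^{k+1}$ makes $\delta$ nilpotent, contradicting $\delta=\delta^{p^n}\neq 0$). Decomposing $\fm=\bigoplus_\mu(\fm\cap A_\mu)$ via semisimplicity and taking images in $\fm/\fm^2$, only the $\lambda$-component of $t$ is nonzero modulo $\fm^2$, giving a $\lambda$-eigenvector $t'\in\fm\setminus\fm^2$; the algebra automorphism $t\mapsto t'$ then normalizes $\delta(t)=\lambda t$. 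Under a Hopf automorphism $\phi$, the new action sends $t$ to $\phi(x)\cdot t=\phi(x)(\lambda)\,t$, where $\phi(x)(\lambda)$ denotes the polynomial $\phi(x)\in H=\kk[x]/(x^{p^n}-x)$ evaluated at $\lambda$; as $\phi$ ranges over $\mathrm{Aut}_{\mathrm{Hopf}}(H)\cong\GL_n(\F_p)$ (obtained dually from $\mathrm{Aut}(C_p^n)$), the value $\phi(x)(\lambda)$ traces out all of $\F_{p^n}^\times$, and choosing $\phi$ with $\phi(x)(\lambda)=1$ yields the canonical form $\delta(t)=t$. In case (iii), $\delta(t)=d_0+d_1 t+\cdots$ with $d_0\ne 0$ forces $p\mid m$ for well-definedness. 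An argument analogous to the $\fm$-decomposition shows some eigenspace $A_\lambda$ with $\lambda\ne 0$ contains a unit $v=c_0+c_1 t+\cdots$; reducing $\delta(v)=\lambda v$ modulo $\fm$ gives $c_1 d_0=\lambda c_0$, so $c_1\ne 0$. Then $w:=v/c_0-1\in\fm\setminus\fm^2$ is a nilpotent generator of $A$ satisfying $\delta(w)=\lambda(w+1)$; a Hopf automorphism normalizes $\lambda=1$, after which the algebra automorphism $\varphi$ with $\varphi(w)=t$ produces the canonical form $\delta(t)=t+1$.

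For pairwise inequivalence, the subset $\mathcal{I}=\{h\in H:h\cdot\text{ preserves }\fm\}$ is a unital subalgebra of $H$, being the preimage of the subalgebra $\{T\in\End_\kk(A):T(\fm)\subseteq\fm\}$ under the algebra map $H\to\End_\kk(A)$ defining the action. In cases (i) and (ii), $x\in\mathcal{I}$, so $\mathcal{I}\supseteq\kk[x]=H$; hence $\phi(x)\cdot$ preserves $\fm$ for every Hopf automorphism $\phi$, and combined with $\varphi(\fm)=\fm$, every twist-equivalent derivation preserves $\fm$. Since case (iii) fails this, it cannot be twist equivalent to case (i) or (ii). Case (i) is distinguished from case (ii) because $\delta=0$ makes the $H$-action factor through $\epsilon:H\to\kk$, so $\phi(x)\cdot=\epsilon(\phi(x))\cdot\id=0$ for every Hopf automorphism $\phi$, keeping the twist trivial. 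I expect the main technical hurdle to be the explicit $\GL_n(\F_p)$-orbit analysis used in the eigenvalue normalizations of cases (ii) and (iii), for which one must track carefully the action of $\mathrm{Aut}_{\mathrm{Hopf}}(H)$ on the primitives $P(H)$ and the induced action on $\F_{p^n}$.
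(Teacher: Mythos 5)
Your proposal is correct and runs on the same engine as the paper's proof: Theorem \ref{thm:zero2nd} together with Proposition \ref{lem:autS} reduces everything to classifying the semisimple derivation $\delta=x\cdot(-)$ with $\delta^{p^n}=\delta$ up to the relation \eqref{action}, followed by normalization via the eigenspace decomposition $A=\bigoplus_{\lambda\in\F_{p^n}}A_\lambda$. The differences are organizational: the paper decomposes the fixed generator $t=\sum_s t_s$ into eigencomponents, singles out the component with nonzero linear term, and splits on whether its eigenvalue $s$ is zero and whether its constant term $a_0$ vanishes, whereas you trichotomize by $\delta=0$, $\delta(\fm)\subseteq\fm$, and $\delta(\fm)\not\subseteq\fm$; both routes land on the same three normal forms. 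For pairwise inequivalence the paper just compares the three smash products as algebras (the first is commutative; the second has $p^n$ one-dimensional characters while the third has none), while you show that ``the action preserves $\fm$'' and ``the action factors through $\e$'' are twist-equivalence invariants via \eqref{action} -- both arguments are valid, the paper's being marginally cheaper since a twist isomorphism is in particular an algebra isomorphism. Two steps you assert without proof do hold and close in a line each. First, the $\GL_n(\F_p)$-orbit analysis you flag as the main hurdle is unnecessary: for $\lambda\in\F_{p^n}^\times$ the assignment $x\mapsto \lambda^{-1}x$ is already a Hopf algebra automorphism of $\kk[x]/(x^{p^n}-x)$ because $\lambda^{p^n}=\lambda$, and it rescales the eigenvalue to $1$; this is exactly the automorphism $\phi(x)=x/s$ used in the paper. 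Second, in your case (iii) the existence of a unit eigenvector with nonzero eigenvalue follows by contradiction: if $A_\lambda\subseteq\fm$ for all $\lambda\neq 0$, then by a dimension count $\fm=(A_0\cap\fm)\oplus\bigoplus_{\lambda\neq 0}A_\lambda$, which is $\delta$-stable since $\delta$ kills $A_0$ and preserves each $A_\lambda$, contradicting $\delta(\fm)\not\subseteq\fm$.
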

\begin{proof}
By Theorem \ref{thm:zero2nd}, $H_{sw}^2(H,A)$ is trivial for any $H$-action on $A$. So it suffices to classify all possible $H$-actions on $A$, which yields twist inequivalent smash products $A\# H$ by Proposition \ref{lem:autS}.  Again, by Hochschild's theorem,  we may assume $H=\kk[x]$ with some primitive generator $x$ satisfying $x^{p^n}=x$. Thus any $H$-action on $A$ is determined by the action of $x$ on $A$ as a $\kk$-derivation satisfying $x^{p^n}=x$. Denote by $\mathbb F_q$ the finite field of  $q=p^n$ elements. Since the operator $x$ is semisimple, there is a decomposition $A=\bigoplus_{i\in \mathbb F_q} A_i$ where $A_i$ is the eigenspace of $x$ for the eigenvalue $i \in \mathbb F_q$. It is immediate to see $A_iA_j\subseteq A_{i+j}$ for  $i, j \in \mathbb F_q$.

Since $A$ is a finite-dimensional local commutative algebra over $\kk$,   $A = \kk[t]/(t^m)$ with $m=\dim A$ and $t$ a nilpotent generator of $A$. Then $t=\sum_{i\in \mathbb F_q} t_i$ where $t_i\in A_i$, and there exists $s \in \mathbb F_q$ such that $t_s =\sum_{i=0}^{m-1} a_i t^i$ with $a_i \in \kk$ and  $a_1 \ne 0$. Thus, $t_s = a_0+tu$ for some invertible element $u \in A$, and hence  $t_s^i \in t^i u^i + F_{i-1}$ where $F_\ell$ is the $\kk$-linear span of $\{t^0, \dots, t^\ell\}$. This implies 
$\{t_s^0, \ldots, t_s^{m-1}\}$ is a $\kk$-basis for $A$ and $1, t_s$ generate $A$ as $\kk$-algebra. 

If $s=0$, then $A = A_0$ and so  $x\cdot t=0$. Now, we assume $s \ne 0$.  Recall that $t_s=a_0+tu$ for some invertible element $u \in A$ and $x\cdot t_s=st_s$. We first consider the case for $a_0\neq 0$ and set $t'=tu/a_0$.  Then  $x\cdot t'=s(t'+1)$. Consider the algebra automorphism $\varphi$ of $A$ defined by $\varphi(t)=t'$ and the Hopf algebra automorphism $\phi$ of $H$ defined by $\phi(x)=x/s$. Since $\varphi^{-1}(\phi(x)\cdot \varphi(t))=t+1$, the smash product in this is twist equivalent to $A\#'' H$, with the $H$-action given by $x\cdot t=t+1$ by \eqref{action}. Since $x\cdot (t^m)=mt^{m-1}(x\cdot t)$, it is easy to see that the action $x\cdot t=t+1$ is not well-defined if $p\nmid m$. 

Now we deal with the case for $a_0=0$. By considering the algebra automorphism $\varphi$ of $A$ defined by $\varphi(t)=tu$ and the Hopf algebra automorphism $\phi$ of $H$ defined by $\phi(x)=x/s$, we can assume that $x\cdot t=t$ up to twist equivalence of smash products by \eqref{action} again.  

  The smash product $A\# H$ given by the trivial $H$-action is commutative, and $A\#'H \cong \kk\langle x,t\rangle/(x^{p^n}-x,t^m,[x,t]=t)$ and $A\#''H \cong \kk\langle x,t\rangle/(x^{p^n}-x,t^m,[x,t]=t+1)$ as algebras. But the former has $p^n$ 1-dimensional characters while the latter has none. Therefore, they are twist inequivalent.  
\end{proof}

Regarding the second $H$-action described in the preceding proposition, when $H=\kk[C_p]^*$ and $A=H^*=\kk[C_p]$, the given smash product is the Heisenberg double of the group algebra $\kk[C_p]$, as shown in the following example.
\begin{example}\cite[Ex. 4.1.10]{Mont93}\label{ExH}
Let $H$ be any finite-dimensional Hopf algebra. There is a left $H$-action $\rightharpoonup$ on the dual Hopf algebra $H^*$  given by
\begin{equation*}
    h\rightharpoonup f~=~\sum_{(h)} f_2(h)f_1
\end{equation*}
for all $h\in H,f\in H^*$. Under this $H$-action, $H^*$ is a left $H$-module algebra. The corresponding smash product $H^*\#H$ is the {\it Heisenberg double} of $H^*$, denoted by $\mathcal H(H^*)$. Note that $\mathcal H(H^*)$ is isomorphic to ${\rm End}_\kk(H^*)$ as algebras (cf. \cite[9.4.3]{Mont93}) and it is an $H$-cleft extension of $H^*$.
\end{example}

\begin{cor}\label{cor:Rp}\label{ExDD}
  Let $\kk$ be an algebraically closed field $\kk$ of characteristic $p>0$ and $H=\kk[C_p]^*$. Then any $H$-cleft extension of the group algebra $\kk[C_p]$ is twist equivalent to one of the following smash products:
 \begin{enumerate}[label=\rm{(\arabic*)}]
     \item The tensor product algebra: $\kk[C_p]\otimes \kk[C_p]^*$.
     \item The Heisenberg double: $\mathcal H(\kk[C_p])$.
     \item The smash product: $\kk[C_p]\# \kk[C_p]^*$, where the $\kk[C_p]^*$-action on $\kk[C_p]$ is given by  $x\cdot g=g-1$ with $C_p=\langle g\rangle$ and $x\in \kk[C_p]^*$ a primitive generator.
 \end{enumerate}   
\end{cor}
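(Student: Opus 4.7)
The plan is to deduce the corollary directly from Proposition \ref{CleftCp} with $n=1$ and $A=\kk[C_p]$. First I would verify that $A$ meets the hypotheses: writing $C_p=\langle g\rangle$ and setting $t=g-1$, the identity $t^p=(g-1)^p=g^p-1=0$ in characteristic $p$ gives $\kk[C_p]\cong \kk[t]/(t^p)$, a finite-dimensional commutative local algebra with nilpotent generator $t$ and dimension $p$. Since $p\mid \dim A$, Proposition \ref{CleftCp} produces exactly three twist-inequivalent smash products, corresponding to $x\cdot t\in\{0,\,t,\,t+1\}$, where $x$ denotes a primitive generator of $H=\kk[C_p]^*$ with $x^p=x$.

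Next I would translate each action from the nilpotent generator $t$ to the group-like element $g=t+1$, using $\epsilon(x)=0$; this yields $x\cdot g=0,\,g-1,\,g$ respectively. The trivial action plainly produces the tensor product of (1), and the action $x\cdot g=g-1$ is exactly the one described in (3). Only the case $x\cdot g=g$ needs to be identified with the Heisenberg double $\mathcal H(\kk[C_p])=\kk[C_p]\#\kk[C_p]^*$ of Example \ref{ExH}. Using the canonical left action $h\rightharpoonup f=\sum_{(f)} f_2(h)\,f_1$ together with $\Delta(g)=g\otimes g$, one finds $x\rightharpoonup g=x(g)\,g$, so the identification reduces to arranging that $x(g)=1$.

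The main (and only genuinely subtle) point is this normalization. Since $x$ is primitive with $x^p=x$, evaluation at $g$ gives $x(g)^p=x(g)$, so $x(g)\in \mathbb F_p$; and $x(g)\ne 0$ since otherwise $x(g^k)=k\, x(g)=0$ for all $k$, forcing $x=0$. Replacing $x$ by $x/x(g)$ yields another primitive element satisfying the same defining relation, i.e., a Hopf algebra automorphism $\phi$ of $H$ that is permissible as a twist by Proposition \ref{lem:autS}(iii), and it transforms the action into $x\cdot g=g$, thereby matching $\mathcal H(\kk[C_p])$. Twist inequivalence of the three smash products has already been established in the proof of Proposition \ref{CleftCp} (by counting one-dimensional characters, and observing that only the first smash product is commutative), so no further work is required.
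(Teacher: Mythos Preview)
Your proof is correct and follows essentially the same approach as the paper's: apply Proposition~\ref{CleftCp} with $A=\kk[C_p]\cong\kk[t]/(t^p)$ via $t=g-1$, and then identify the three resulting actions with items (1)--(3). Your treatment of the normalization $x(g)=1$ needed to match the Heisenberg double is in fact more careful than the paper's own proof, which simply asserts that the action $x\cdot g=g$ coincides with the natural action $\rightharpoonup$ without discussing the choice of primitive generator.
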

\begin{proof} 
Note that $H \cong \kk[x]/(x^p-x)$ as Hopf algebras, where $x$ is primitive.
Our result follows from Theorem \ref{CleftCp} since $\kk[C_p]\cong \kk[t]/(t^p)$ as algebras with $t=g-1$. Case (1) corresponds to the trivial action $x\cdot t=0$. For Case (2), the $H$-action on $\kk[C_p]$ is given by $x\cdot t=t+1$ or $x\cdot g=g$, which coincides with the natural action $\rightharpoonup$ of $\kk[C_p]^*$ on $\kk[C_p]$. By Example \ref{ExH}, the corresponding smash product $\kk[C_p]\# \kk[C_p]^*$ is the Heisenberg double $\mathcal H(\kk[C_p])$. Case (3) corresponds to the action $x\cdot t=t$ or $x\cdot g=g-1$.  
\end{proof}

\begin{prop}\label{ExtG}
  Let $\kk$ be an algebraically closed field of characteristic $p>0$, and $H=\kk[G]$ for some elementary abelian $p$-group $G$. Then any $H$-cleft extension of the semisimple algebra $A=\kk[x]/(x^p-x)$ is twist equivalent to either $A\otimes H$ or $\mathcal H(\kk[C]^*)\otimes \kk[G']$ for some subgroups $C, G'$ of $G$ such that $C \cong C_p$ and $C \oplus G'=G$, where $\kk[G]$ is naturally identified with $\kk[C]\o \kk[G']$ and $A$ is identified with $\kk[C]^*$. In particular, if $G=C_p$, then any $H$-cleft extension of $A=H^*$ is twist equivalent to one of the smash products below: 
 \begin{enumerate}[label=\rm{(\arabic*)}]
     \item The tensor product algebra: $\kk[C_p]^*\otimes \kk[C_p]$.
     \item The Heisenberg double: $\mathcal H(\kk[C_p]^*)$.
 \end{enumerate} 
\end{prop}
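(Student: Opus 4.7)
The plan mirrors the proof of Proposition \ref{CleftCp}, with the roles of $\kk[C_p^n]^*$ and $\kk[C_p^n]$ interchanged. First, I would establish that the relevant Sweedler cohomology vanishes so that Proposition \ref{lem:autS}(ii) applies. Since $\kk[G]$ is spanned by group-like elements, a normalized Sweedler $n$-cochain $\kk[G]^{\otimes n}\to A$ is determined by its restriction to $G^{\times n}$, and the Sweedler cocycle/coboundary conditions translate verbatim to ordinary group cochain conditions for $G$ with values in the abelian group $A^{\times}$. Hence $H^2_{sw}(\kk[G],A)\cong H^2(G,A^{\times})$, which is trivial by Lemma \ref{l:vanishing_coh} since $G$ is a $p$-group and $A=\kk[x]/(x^p-x)$ is finite-dimensional commutative semisimple. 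Thus every $\kk[G]$-cleft extension of $A$ is equivalent to a smash product $A\#\kk[G]$, and by Proposition \ref{lem:autS}(iii) the classification up to twist equivalence reduces to the classification of $\kk[G]$-module algebra structures on $A$ modulo the $(\varphi,\phi)$-equivalence in \eqref{action}.

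Next I would classify these module algebra structures. Each corresponds to a group homomorphism $\rho\colon G\to {\rm Aut}_{\rm Alg}(A)\cong S_p$, where $S_p$ acts by permutation of the $p$ primitive orthogonal idempotents of $A$. Since $G$ is a $p$-group and the Sylow $p$-subgroups of $S_p$ are cyclic of order $p$, the image of $\rho$ is either trivial or isomorphic to $C_p$. In the trivial case one obtains the commutative tensor product $A\otimes H$. Otherwise, $G'=\ker\rho$ has index $p$; I would pick any complement $C\le G$ with $C\cong C_p$ so that $G=C\oplus G'$. The action factors through $C$ and $G'$ acts trivially, giving
\[
A\#\kk[G]\;\cong\;(A\#\kk[C])\otimes\kk[G'].
\]

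To identify $A\#\kk[C]$, I would use Hochschild's theorem to identify $A\cong\kk[C]^{*}$ as Hopf algebras. Writing $c$ for a generator of $C$ and $x\in A$ for its primitive generator, the canonical $\rightharpoonup$-action of Example \ref{ExH} computes to $c\rightharpoonup x=x+1$; any other nontrivial action of $c$ acts on the idempotents as a $p$-cycle and hence has the form $c\cdot x=x+\lambda$ for some $\lambda\in\mathbb{F}_p^{\times}$. Taking $\varphi=\mathrm{id}_A$ and the Hopf automorphism $\phi(c)=c^{\lambda^{-1}}$, the equation \eqref{action} is satisfied, so by Proposition \ref{lem:autS}(iii) every nontrivial $\kk[C]$-action on $A$ is twist equivalent to $\rightharpoonup$. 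Example \ref{ExH} then identifies $A\#\kk[C]$ with the Heisenberg double $\mathcal{H}(\kk[C]^{*})$, yielding $A\#\kk[G]\cong\mathcal{H}(\kk[C]^{*})\otimes\kk[G']$. The choice of splitting $G=C\oplus G'$ does not matter up to twist equivalence, since any two such splittings are related by an automorphism of $G$, inducing a Hopf automorphism of $\kk[G]$. The two resulting classes are visibly inequivalent since one smash product is commutative while the other is not. The ``in particular'' assertion is the specialization $G=C_p$, $G'=1$.

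The chief obstacle is the cohomological reduction: while the identification of Sweedler cohomology of $\kk[G]$ with ordinary group cohomology is folklore, it must be verified that the normalization conditions and convolution inverses match correctly so that Lemma \ref{l:vanishing_coh} may be invoked. The remaining work is essentially combinatorial—classifying $p$-subgroups of $S_p$ and comparing the nontrivial $\kk[C]$-actions on $\kk[C]^{*}$ through the single generator $x$.
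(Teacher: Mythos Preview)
Your approach is essentially the paper's: reduce to smash products via vanishing of $H^2_{sw}(\kk[G],A)\cong H^2(G,A^\times)$ (the paper simply cites \cite[Thm.~3.1]{Sw68} for this identification), then classify $G$-actions on $A$ via the homomorphism $G\to{\rm Aut}_\kk(A)\cong S_p$ and the Sylow argument.

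There is, however, a genuine gap in your normalization step. You assert that any nontrivial action of a generator $c$ on $A$, being a $p$-cycle on the idempotents $\{e_i\}_{i\in\mathbb F_p}$, satisfies $c\cdot x=x+\lambda$ for some $\lambda\in\mathbb F_p^\times$. This is false for $p\ge 5$: writing $c\cdot e_i=e_{\sigma(i)}$ for a $p$-cycle $\sigma\in S_p$, one has $c\cdot x=\sum_i i\,e_{\sigma(i)}=\sum_j \sigma^{-1}(j)\,e_j$, which equals $x+\lambda$ only when $\sigma$ is the translation $i\mapsto i-\lambda$. There are $(p-1)!$ $p$-cycles in $S_p$ but only $p-1$ translations, so for $p\ge 5$ most $p$-cycles do not have this form. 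Consequently, taking $\varphi=\id_A$ is not sufficient, and your Hopf automorphism $\phi(c)=c^{\lambda^{-1}}$ alone cannot reach all nontrivial actions.

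The fix is exactly what the paper does: since all $p$-cycles are conjugate in $S_p$, choose $\varphi\in{\rm Aut}_\kk(A)\cong S_p$ so that $\varphi\circ\rho(c)\circ\varphi^{-1}$ is the standard shift $e_i\mapsto e_{i-1}$; then $c\cdot x=x+1$ directly and no further $\phi$ is needed. With this correction your argument goes through, and your additional remark that different splittings $G=C\oplus G'$ are related by an automorphism of $G$ is a nice complement to the paper's proof.
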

\begin{proof}
We write $A=\bigoplus_{i\in \Z_p} \kk\, e_i$ where $\{e_0,\ldots,e_{p-1}\}$ is a complete set of primitive orthogonal idempotents of $A$ and $x=\sum_{i\in \Z_p} ie_i$.  By \cite[Thm. 3.1]{Sw68}, we have 
$$
H_{sw}^2(\kk[G],A)=H^2(G,A^\times).
$$
It follows from Lemma \ref{l:vanishing_coh} that $H_{sw}^2(\kk[G],A)$ is trivial for any $G$-action on $A$.

 Now, it suffices to classify all possible $\kk[G]$-actions on $A$ up to automorphisms of $A$ and $H$ by Proposition \ref{lem:autS}. It is clear that  ${\rm Aut}_\kk(A)$ is in one-to-one correspondence with the permutation group $S_p$ of $\{e_0,\ldots,e_{p-1}\}$. One can identify $\varphi \in {\rm Aut}_\kk(A)$ with  the permutation in $S_p$, again denoted by $\varphi$, via $\varphi(e_i)=e_{\varphi(i)}$. Hence any $\kk[G]$-action on $A$ is determined by a group homomorphism $f: G\to S_p$. If  $f$ is trivial, then the $\kk[G]$-action on $A$ is trivial, so the smash product is $A \o H$. If $f$ is nontrivial, then ${\rm Im}(f)$ is a nontrivial elementary $p$-subgroup of $S_p$. Since $p^2 \nmid p!$, every nontrivial $p$-subgroup of $S_p$ is a Sylow $p$-subgroup isomorphic to $C_p$.  In particular, ${\rm Im}(f) \cong C_p$. Hence $G'=\ker(f)$ is an index $p$ subgroup of $G$, and there exists a cyclic subgroup $C \cong C_p$ of $G$ such that $G = C\oplus G'$ and $f|_C$ is injective. 
Since ${\rm Im}(f)$ is a Sylow $p$-subgroup of $S_p$, there is some automorphism $\varphi$ of $A$ (or $S_p$) such that $\varphi\circ f(g)\circ \varphi^{-1}$ sends every $e_i$ to $e_{i-1}$, where $g$ is a generator of $C$. By \eqref{action}, we can assume that $g\cdot e_i=e_{i-1}$ for all $i\in \mathbb Z_p$. Hence 
\begin{align*}
g\cdot x=\sum_{i\in\mathbb Z_p} i(g\cdot e_i)=\sum_{i\in\mathbb Z_p} (i-1)e_{i-1}+\sum_{i\in\mathbb Z_p} e_i=x+1.
    \end{align*}
    Note the above action of $g$ on $A$ coincides with the natural action $\rightharpoonup$  of  $\kk[C_p]$ on $A=\kk[C]^*$. Therefore, it gives the corresponding smash product 
   $$ 
    A\#H\cong A\# (\kk[C]\otimes \kk[G'])\cong (A\# \kk[C])\otimes \kk[G']\cong \mathcal H(\kk[C]^*)\otimes \kk[G']. 
    $$
    Finally, it is clear that these  smash products are not twist equivalent. 
\end{proof}

\section{Short exact sequences of $p$-dimensional Hopf algebras}
In the last section,  we will provide a list of the Radford algebra $R(p)$ characterizations and identify all $p^2$-dimensional Hopf algebra extensions of $p$-dimensional ones. In particular, these extensions are all pointed. 

We first recall the classification of $p$-dimensional Hopf algebras over an algebraically closed field $\kk$ of characteristic $p$. There are only two nonisomorphic $p$-restricted Lie algebras of dimension 1 generated by $x$, and they are given by (i) $\mathfrak a_1$: $x^p=x$; (ii) $\mathfrak a_2$: $x^p=0$. Note that the restricted universal enveloping algebra $u(\mathfrak a_1)=\kk[x]/(x^p-x)$ is isomorphic to the dual group algebra $\kk[C_p]^*$.
\begin{thm}\cite[Thm. 2.1]{NW19} \label{thm:p1}
   Let $\kk$ be an algebraically closed field  of characteristic $p>0$ and $H$ a $p$-dimensional Hopf algebra over $\kk$. Then $H \cong \kk[C_p]$, $u(\fa_1)$ or $u(\fa_2)$. 
\end{thm}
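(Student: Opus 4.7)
The plan is to stratify $H$ by the orders of $G(H)$ and $G(H^*)$. Since $\dim H = p$ is prime, the Nichols--Zoeller theorem forces every Hopf subalgebra of $H$ to have dimension $1$ or $p$; applied to $\kk[G(H)]$ and dually to $\kk[G(H^*)]$, this gives $|G(H)|, |G(H^*)| \in \{1, p\}$. Three mutually exclusive cases then arise, and each will pin down $H$ up to isomorphism.

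If $|G(H)| = p$, then $\kk[G(H)] = H$ by dimension count, yielding $H \cong \kk[C_p]$. Dually, if $|G(H^*)| = p$, then $H^* \cong \kk[C_p]$, so $H \cong \kk[C_p]^*$; since $\kk[C_p]^*$ is a connected semisimple Hopf algebra, Hochschild's theorem (as invoked in the proof of Proposition~\ref{prop:semiprim}) identifies it with $u(\fa_1)$.

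It remains to handle $|G(H)| = |G(H^*)| = 1$, for which I claim $H \cong u(\fa_2)$. The key intermediate step is to show $H$ is pointed, hence connected (since $H_0 = \kk[G(H)] = \kk$). Granting this, $\dim H > 1$ forces the coradical filtration to strictly increase past $H_0$, so $P(H) \ne 0$; picking $0 \ne x \in P(H)$, the subalgebra $\kk\langle x\rangle$ is a Hopf subalgebra isomorphic to the restricted enveloping algebra $u(\mathfrak{g})$ of the restricted Lie algebra $\mathfrak{g}$ generated by $x$ (cf.\ \cite[Thm.~1]{May}). Its dimension $p^{\dim \mathfrak{g}}$ must divide $p$, so $\dim \mathfrak{g} = 1$ and $H = u(\mathfrak{g})$. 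Writing $x^{[p]} = \lambda x$, algebraic closedness of $\kk$ lets me rescale $x$ so that $\lambda \in \{0, 1\}$, giving $u(\fa_2)$ or $u(\fa_1)$; the latter is ruled out because $|G(u(\fa_1)^*)| = |G(\kk[C_p])| = p$.

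The main obstacle is the pointedness claim in this last case: ruling out a non-pointed $p$-dimensional Hopf algebra with $G(H) = G(H^*) = 1$. If $H$ were not pointed, $H_0$ would contain a simple subcoalgebra of dimension $n^2 \ge 4$, which immediately disposes of $p \in \{2,3\}$. For $p \ge 5$, the plan is to exploit the absence of nontrivial $1$-dimensional simple $H$-modules (forced by $|G(H^*)| = 1$) together with the dimension identity~\eqref{dimH}, the projective-tensor formula~\eqref{PM}, and Radford's $S^4$-formula, running an orbit-counting argument in the spirit of Theorem~\ref{thm:basic} to force a contradiction on the resulting Cartan data; since $\dim H = p$ is prime, the numerical constraints on $\sum d_i \dim P(V_i)$ and on the self-duality patterns of the nontrivial simples are tight enough to leave no admissible configuration.
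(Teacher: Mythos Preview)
The paper does not supply a proof of this statement; Theorem~\ref{thm:p1} is quoted from \cite{NW19} without argument. Your proposal therefore cannot be compared to anything in the present text, but it is worth observing that your strategy amounts to specializing Theorem~\ref{thm:lageG} (or its dual Theorem~\ref{thm:basic}) to $n=1$, and this does yield a valid proof once one gap is repaired.

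The gap sits in your third case $|G(H)|=|G(H^*)|=1$. Your final paragraph appeals to the orbit machinery ``in the spirit of Theorem~\ref{thm:basic}'', but that theorem carries the standing hypothesis that $H$ be nonsemisimple, and you never exclude the semisimple possibility. Bare dimension counting does not do it: for instance $p=5=1+2^2$ is numerically compatible with a semisimple $H$ having $\irr(H)=\{\kk,V\}$, $\dim V=2$. The missing ingredient is \cite[Cor.~3.2(i)]{EG98}: since $p\mid\dim H$, the algebra $H$ cannot be simultaneously semisimple and cosemisimple. After possibly swapping $H$ with $H^*$ (harmless, since $u(\fa_2)$ is self-dual and the hypotheses are symmetric), you may then assume $H$ is noncosemisimple, at which point Theorem~\ref{thm:lageG} with $n=1$ applies directly and gives $H$ pointed; your primitive-element argument then finishes cleanly. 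Two minor remarks: you may simply \emph{invoke} Theorem~\ref{thm:lageG} rather than rerun its proof, as its proof does not rely on Theorem~\ref{thm:p1}; and your sketch drifts between showing $H$ pointed (your stated goal) and showing $H^*$ pointed (what the module-side constraints you list would actually establish)---either route works, but you should commit to one.
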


Independent of the classification of $p^2$-dimensional pointed Hopf algebras, we provide some characterizations of the Radford algebra $R(p)$ as the unique $p^2$-dimensional Hopf algebra over $\kk$ which satisfies any of the equivalent conditions in the following theorem.

\begin{thm}\label{thm:Radford}
     Let $\kk$ be an algebraically closed field of characteristic $p>0$, and $H$  a $p^2$-dimensional Hopf algebra over $\kk$. The following statements are equivalent:
     \begin{enumerate}[label=\rm{(\roman*)}]
         \item $H$ is isomorphic to the Radford algebra $R(p)$;
         \item  $H$ is noncommutative, pointed  and $G(H)$ is not trivial;
         \item H is noncommutative, and it admits a normal Hopf subalgebra isomorphic to $\kk[C_p]$;
         \item $H$ is a nontrivial extension, i.e., not isomorphic to the tensor Hopf algebra $\kk[C_p] \o \kk[C_p]^*$, that fits into the exact sequence
     \begin{equation} \label{eq:RadExt}
    1 \to \kk[C_p] \xrightarrow{\iota} H \xrightarrow{\pi} \kk[C_p]^* \to 1.
\end{equation}
     \end{enumerate}
     In particular, $R(p) \cong R(p)^*$.
\end{thm}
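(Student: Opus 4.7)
The plan is to close the cycle (i) $\Rightarrow$ (ii) $\Rightarrow$ (iii) $\Rightarrow$ (iv) $\Rightarrow$ (i), from which $R(p)\cong R(p)^*$ follows because characterization (iv) is manifestly preserved under Hopf duality (the dual of an extension $1 \to \kk[C_p] \to H \to \kk[C_p]^* \to 1$ is again of the same form, so uniqueness in (iv) $\Rightarrow$ (i) applied to $R(p)^*$ gives $R(p)^* \cong R(p)$). The implications (i) $\Rightarrow$ (ii), (iii), (iv) are by direct inspection of the defining relations of $R(p)$: the generator $g$ is group-like of order $p$, $[g,x] = g(g-1) \ne 0$ yields noncommutativity, pointedness follows from the basis $\{g^i x^j\}$, normality of $K := \kk\langle g\rangle \cong \kk[C_p]$ is verified via $\ad(x)(g) = xg - gx \in K$, and $R(p)/(g-1)R(p) \cong \kk[x]/(x^p - x) \cong \kk[C_p]^*$ exhibits the nontrivial extension.

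For (ii) $\Rightarrow$ (iii), the Nichols--Zoeller theorem forces $|G(H)| \in \{1, p, p^2\}$; the hypothesis excludes $1$, and $|G(H)| = p^2$ would make $H = \kk[G(H)]$ commutative, so $K := \kk[G(H)] \cong \kk[C_p]$. Since $H$ is nonsemisimple, $H_1 \supsetneq H_0$, hence the Hopf subalgebra $\kk[H_1]$ must equal $H$, and $H$ is generated as an algebra by $K$ together with nontrivial skew primitives. After translation by group-likes, any such primitive may be taken to be $(a,1)$-primitive for some $a \in G(H)$; conjugation by the generator $g$ of $G(H)$ preserves $P_{a,1}(H)$ with order dividing $p$, and since $1$ is the only $p$-th root of unity in $\kk$, the action descends to the identity on $P_{a,1}/\kk(a-1)$, forcing $[g,x] \in K$ for every such skew primitive $x$. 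Combined with $\ad(ab) = \ad(a) \circ \ad(b)$, this shows the subalgebra $\{h \in H : \ad(h)(K) \subseteq K\}$ contains a generating set of $H$, and therefore equals $H$.

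For (iii) $\Rightarrow$ (iv), the quotient $Q := H/HK^+$ is a Hopf algebra of dimension $p$, so by Theorem \ref{thm:p1} it is one of $\kk[C_p]$, $\kk[C_p]^*$, or $u(\fa_2)$. The case $Q = \kk[C_p]$ is excluded because pointedness is preserved under Hopf quotients with $G(Q) = \pi(G(H))$, yet $G(H) = G(K) \subseteq K$ maps to $\{1\}$ in $Q$ while $|G(\kk[C_p])| = p$. The case $Q = u(\fa_2)$ is excluded by lifting its primitive to a nontrivial $(g,1)$-skew primitive $\tilde y \in H$ (which exists after changing the generator of $G(H)$ by a suitable power) and computing $\Delta(\tilde y^p) = \Delta(\tilde y)^p$ via Jacobson's formula in $H \otimes H$: the nonzero commutator $[g, \tilde y] = g^2 - g$, together with iterated commutator terms $[\tilde y, g^k] = k(g^k - g^{k+1})$, produces a nontrivial Lie contribution that forces $\tilde y^p$ to be a nonzero scalar multiple of $\tilde y$, contradicting $y^p = 0$ in $u(\fa_2)$. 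Hence $Q \cong \kk[C_p]^*$, and the noncommutativity of $H$ gives nontriviality of the extension.

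For (iv) $\Rightarrow$ (i), $H$ is a $\kk[C_p]^*$-cleft extension of $\kk[C_p]$, so by Corollary \ref{cor:Rp} its underlying algebra is twist equivalent to one of the three listed smash products: the tensor product, the Heisenberg double $\mathcal H(\kk[C_p]) \cong \End_\kk(\kk[C_p]) \cong M_p(\kk)$, or the smash product with action $x \cdot g = g - 1$. Nontriviality excludes the commutative tensor; the Heisenberg double is excluded because the simple matrix algebra $M_p(\kk)$ with $p \ge 2$ admits no nonzero algebra map to $\kk$ and hence no counit, precluding any Hopf algebra structure. The remaining third smash product is algebra-isomorphic to $R(p)$ (via $g \mapsto g^{-1}$, $x \mapsto -x$), and the unique Hopf algebra structure on this algebra making the given sequence exact coincides with that of $R(p)$. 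The main obstacle in this proof lies in the (iii) $\Rightarrow$ (iv) step, specifically the Jacobson-formula computation excluding $Q = u(\fa_2)$.
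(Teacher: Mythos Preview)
Your cycle has genuine gaps, the most serious in the step (iv) $\Rightarrow$ (i). Corollary~\ref{cor:Rp} classifies $\kk[C_p]^*$-cleft extensions of $\kk[C_p]$ only up to twist equivalence, which pins down the algebra structure of $H$ together with the inclusion $\iota$ and the right $\kk[C_p]^*$-comodule structure induced by $\pi$, but \emph{not} the full comultiplication $\Delta_H$. You correctly exclude the Heisenberg double via the absence of a counit. However, ``nontriviality excludes the commutative tensor'' conflates algebra isomorphism with Hopf isomorphism: a Hopf algebra whose underlying cleft extension is twist equivalent to the tensor product is merely a \emph{commutative} $H$ fitting the sequence, and one must still argue (as the paper does in its (iv) $\Rightarrow$ (iii)) that any such $H$ is Hopf-isomorphic to $\kk[C_p]\otimes\kk[C_p]^*$. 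Likewise, the assertion that ``the unique Hopf algebra structure on [the third smash product] making the given sequence exact coincides with that of $R(p)$'' is unjustified: the cleft data leave $\Delta_H(x)$ undetermined modulo $\ker(\pi\otimes\pi)$, and uniqueness would require either a separate coalgebra-side cocycle analysis of the extension or the direct derivation of the defining relations carried out in the paper's (ii) $\Rightarrow$ (i).

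Your (iii) $\Rightarrow$ (iv) step also tacitly uses condition (ii). Both the exclusion of $Q=\kk[C_p]$ via ``$G(Q)=\pi(G(H))$'' and the exclusion of $Q=u(\fa_2)$ via ``lifting its primitive to a nontrivial $(g,1)$-skew primitive $\tilde y\in H$'' presuppose that $H$ is pointed, which is not part of hypothesis (iii). The paper separates these concerns: it proves (iii) $\Rightarrow$ (ii) by first observing $|G(H)|=p$ and then invoking Theorem~\ref{thm:lageG} once $H^*$ is shown to be nonsemisimple (the latter via Masuoka's result on connected semisimple Hopf algebras), and it proves (iv) $\Rightarrow$ (iii) by showing directly that a commutative $H$ in the sequence is the trivial extension. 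A smaller point: in your (ii) $\Rightarrow$ (iii), the claim that conjugation by $g$ ``descends to the identity on $P_{a,1}/\kk(a-1)$'' is not valid as stated, since an order-$p$ automorphism in characteristic $p$ is unipotent but need not act trivially on a quotient of dimension greater than one; you need $\dim P_{a,1}\le 2$ here, which the paper also uses (and justifies via $g,x$ generating $H$) in its (ii) $\Rightarrow$ (i).
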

\begin{proof} It is clear that (i) implies (ii), (iii), and (iv). It remains to show the converse implications. 

\noindent (ii) $\Rightarrow$ (i) Suppose $H$ is pointed and noncommutative with nontrivial $G(H)$. Then $G(H)\cong C_p$ since $H$ is noncommutative and $\dim H =p^2$. Therefore, $\kk[G(H)] = H_0 \subsetneq H_1$ or $H$ admits a nontrivial $(1, a)$-primitive element $x$ for some $a \in G(H)$. 

Let $g$ be a generator of $G(H)$. Then $g,x$ generate a Hopf subalgebra of $H$ of dimension $>p$. Thus, $g,x$ generate $H$ as an algebra by the Nichols-Zoeller theorem. Since $\dim H = p^2$,  $P_{1,a}(H)$  is spanned by $x, a-1$ and is a $G(H)$-module by conjugation. Let $N=\kk (a-1)$, which is a trivial $G(H)$-submodule of $P_{1,a}(H)$. Note that $N$ could be zero if $a=1$ but $P_{1,g}(H)/N$ is a nonzero trivial $G(H)$-module. Thus $gxg^{-1} = x+\a (a-1)$ for some $\a \in \kk$. If $\a (a-1)=0$, then $H$ generated by $g,x$ would be commutative. Therefore, $\a (a-1) \ne 0$ which is equivalent to $\a \in \kk^\times$ and $G(H)=\langle a \rangle$. Without loss of generality, one can assume $a=g$ and replace the $(1,g)$-primitive element $x$ by $x/\a$ to obtain the relation $[g,x] = g(g-1)$ or $[x, g] = g-g^2$. In particular, $H_0$ is normal in $H$. Since $[x, g^j] = jg^{j-1}[x, g] = j(g^j-g^{j+1})$ for $0\leq j\leq p-1$, the set of eigenvalues of $\ad(x)|_{H_0}$ is $\Z_p$ and so the characteristic polynomial of $\ad(x)|_{H_0}$ is $X^p-X$. Thus, $\ad(x^p)|_{H_0}=\ad(x)^p|_{H_0}=\ad(x)|_{H_0}$. Thus, $x^p-x$ lies in the center of $H$. 
  
Let $L = H/(H^+_0 H)$ and $\pi: H \to L$ be the natural surjection. We claim that $L\cong u(\fa_1)$. It is clear that $L$ is generated by the nonzero primitive element $\pi(x)$, and so $L\cong u(\mathfrak a_i)$ for $i=1$ or $2$. If $L\cong u(\mathfrak a_2)$,  then $H$ is local by Lemma \ref{lem:SESlocal}(iii). This implies that $x^{p^2}\in (H^+)^{p^2}=0$, which contradicts to the fact that $\ad(x^{p^2})=\ad(x)^{p^2}=\ad(x)\neq 0$ on $H_0$. Therefore, $L\cong u(\mathfrak a_1)$. 
  
  Let $\ol x$ be a primitive generator for $L$ such that $\ol x^p =\ol x$. Since $\pi(x)$ is a nonzero primitive element of $L$, $\b \pi(x)=\ol x$ for some $\b \in \kk^\times$ and $\{x^j\mid j=0,\dots, p-1\}$ is a $\kk$-linearly independent subset in $H$. Define the $\kk$-linear map $\gamma: L \to H$ by $\gamma(\ol x^i) = (\b x)^i$ for $0\leq i\leq p-1$. On can check directly that $\gamma$ is a left $L$-comodule map with the convolution inverse $\ol \gamma: L \to H$ given by  $\ol \gamma(\ol x^i) = (-\b x)^i$ for $0\leq i\leq p-1$. Thus $H$ is spanned by $\{x^ig^j \mid i, j = 0,\ldots, p-1\}$ as $H$ is a left $\kk[C_p]^*$-cleft extension of $\kk[C_p]$, and hence it forms a basis for $H$ as $\dim H=p^2$.
  
  Now, we can write $x^p-x=\sum_{0\le j\le p-1} f_j(x) g^j$ for some unique polynomial $f_j(x)$ in $x$ of degree $\le p-1$.  We find
\begin{align*}
    0=[x,x^p-x]& =\sum_{0\le j\le p-1}f_j(x)[x,g] =\sum_{0\le j\le p-1}j f_j(x)(g^j-g^{j+1})\\
    & =f_1(x) g +\left(\sum_{j=1}^{p-2}
((j+1) f_{j+1}(x) -jf_j(x))g^{j+1}\right)  + f_{p-1}(x).
\end{align*}
This implies  $f_j(x)=0$ for all $j=1, \dots, p-1$. Thus $x^p-x=f_0(x)$. Applying $\pi$ to this equality, we find  
\[
(\b^{-p}-\b^{-1})\ol{x}=\b^{-p}\ol x^p-\b^{-1}\ol{x} = \pi(x^p-x)=\pi(f_0(x))=f_0(\b^{-1} \ol x)\,,
\]
or equivalently, $h(\ol x)=0$ where $h(X) = f_0(\b^{-1} X) - (\b^{-p}-\b^{-1})X \in \kk[X]$ has degree $\le p-1$. Thus, $h(X)=0$ or $f_0(X) = (\b^{(1-p)} -1)X$. Now, we find 
$$
0=[x^p-x, g]= [(\b^{(1-p)} -1) x, g] = (\b^{(1-p)} -1)g(1-g)
$$
which forces $\b^{(1-p)}-1 =0$. Hence $f_0(x)=0$ or  $x^p-x=0$ in $H$. Therefore,  $H$ is isomorphic to the Radford algebra $R(p)$. 

\noindent (iii) $\Rightarrow$ (ii) Suppose $H$ is noncommutative and admits a normal Hopf subalgebra $K \cong \kk[C_p]$.  In view of Theorem \ref{thm:basic}, it suffices to show that $H^*$ is nonsemisimple. We have an exact sequence of Hopf algebras:
$$
1 \to L^* \to H^* \to K^* \to 1,\quad \text{where }L = H/K^+ H\,.
$$
Suppose $H^*$ is semisimple. Then so is $L^*$. It follows from Theorem \ref{thm:p1} that $L^* \cong u(\fa_1)$. Since $K^*$ and $L^*$ are connected, so is $H^*$ by Lemma \ref{lem:SESlocal}(ii). However, this implies $H\cong \kk[C_p\times C_p]$ or $\kk[C_{p^2}]$ by \cite[Thm. 0.1]{Masuoka09}. This contradicts the noncommutativity of $H$. 

\noindent (iv) $\Rightarrow$ (iii)  Let $H$ be a commutative Hopf algebra which fits into the exact sequence \eqref{eq:RadExt}. It suffices to show that $H$ is a trivial extension. Note that $H^*$ satisfies the same exact sequence. Therefore, both $H$ and $H^*$ are nonsemisimple, so $H$ and $H^*$ are pointed by Theorem \ref{thm:basic}. Then $H^*$ is also commutative; otherwise, $H^* \cong R(p)$ as Hopf algebras by (ii), which would imply $H$ is noncommutative. Since $H_0 \subsetneq H_1$, there exists a nontrivial primitive element $x \in H$ and $\dim P(H) =1$. It is clear that $\kk[x]$ maps surjectively onto $\kk[C_p]^*$ and hence $\kk[x] \cong \kk[C_p]^*$ as Hopf algebras. Since $H$ is generated by $G(H)$ and $x$ as an algebra,
$H \cong \kk[G(H)]\o \kk[x] \cong \kk[C_p]\o \kk[C_p]^*$ as Hopf algebras. 

The last assertion follows directly from the equivalence of (i) and (iv).
\end{proof}

Recently, Gelaki  considered in \cite[Ex. 6.14]{Ge} the Drinfeld twist 
\begin{equation}\label{eq:twist}
J=\sum_{i=0}^{p-1} \frac{x(x-1)\cdots(x-i+1)\otimes y^i}{i!}
\end{equation}
of the restricted universal enveloping algebra  $u(\fg_5)$ of the 2-dimensional nonabelian $p$-restricted Lie algebra $\fg_5$ spanned by $x,y$ such that $[x,y]=y$, $x^p=x$ and $y^p=0$. It is pointed out that the corresponding twisted Hopf algebra $u(\fg_5)^J$ is noncommutative and noncocommutative  of dimension $p^2$ in characteristic $p$.   Now, we show that this noncommutative and noncocommutative Hopf algebra is isomorphic to the Radford algebra $R(p)$ due to the preceding theorem.  The readers are referred to \cite{KMN} for the notions of a Drinfeld twist $J$ of a Hopf algebra $H$, the corresponding twisted Hopf algebra $H^J$, and gauge equivalence of Hopf algebras.

\begin{cor}\label{GR}
Let $\kk$ be an algebraically closed field of characteristic $p>0$ and $\fg_5$ the  2-dimensional nonabelian $p$-restricted Lie algebra over $\kk$ described above. For any  Drinfeld twist $J$ of $u(\fg_5)$ such that $u(\fg_5)^J$ is noncocommutative,  $u(\fg_5)^J \cong R(p)$ as Hopf algebras. In particular, $u(\fg_5)$ and $R(p)$ are gauge equivalent. 
\end{cor}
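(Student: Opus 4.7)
The plan is to apply Theorem \ref{thm:Radford}(iii) to the twisted Hopf algebra $H := u(\mathfrak{g}_5)^J$ for the explicit twist $J$ in \eqref{eq:twist}, which immediately yields $H \cong R(p)$ and hence the gauge equivalence of $u(\mathfrak{g}_5)$ with $R(p)$. Since every Drinfeld twist preserves the underlying algebra, $H$ has the noncommutative algebra of $u(\mathfrak{g}_5)$, so it suffices to exhibit a normal Hopf subalgebra of $H$ isomorphic to $\kk[C_p]$.

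First I would compute $\Delta^J(y) = J\Delta(y)J^{-1}$ explicitly. The relation $[x,y]=y$ yields the identity $f(x)\,y = y\,f(x+1)$ for any polynomial $f$, which combined with the Pascal rule $\binom{x+1}{i} = \binom{x}{i}+\binom{x}{i-1}$ gives
\[
J(y\otimes 1) = (y\otimes 1)J + (y\otimes y)J \quad\text{and}\quad J(1\otimes y) = (1\otimes y)J.
\]
Hence $\Delta^J(y) = y\otimes 1 + 1\otimes y + y\otimes y$. Setting $g := 1+y$, one finds $\Delta^J(g) = g\otimes g$, while $g^p = (1+y)^p = 1+y^p = 1$ in characteristic $p$ (using $y^p=0$ and the vanishing of middle binomial coefficients). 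Thus $g$ is a group-like element of order $p$ in $H$, and $K := \kk\langle g\rangle \cong \kk[C_p]$ is a Hopf subalgebra of $H$.

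Next I would verify that $K$ is normal in $H$, i.e., $K^+H = HK^+$. The augmentation ideal $K^+$ is the $\kk$-span of $y, y^2, \dots, y^{p-1}$, and since the algebra of $H$ coincides with $u(\mathfrak{g}_5)$, the commutation relation $yx=(x-1)y$ (and inductively $y^i x = (x-i)y^i$) gives $y^i u(\mathfrak{g}_5) = u(\mathfrak{g}_5)\, y^i$ for every $i$; hence $K^+H = HK^+$. Theorem \ref{thm:Radford}(iii) then yields $H \cong R(p)$, establishing the gauge equivalence of $u(\mathfrak{g}_5)$ with $R(p)$. The main obstacle is the explicit computation of $\Delta^J(y)$ and the recognition of $g = 1+y$ as the correct candidate group-like element; once these are in hand, the structural conclusion via Theorem \ref{thm:Radford} is essentially automatic. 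The uniform statement over all noncocommutative twists $J'$ follows by the same template applied to $u(\mathfrak{g}_5)^{J'}$: the algebra structure, hence noncommutativity, is preserved by any twist, so it suffices to locate a group-like element of order $p$ generating a normal Hopf subalgebra, after which another application of Theorem \ref{thm:Radford}(iii) closes the argument.
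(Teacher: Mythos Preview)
Your computation for the explicit twist $J$ of \eqref{eq:twist} is correct and gives a clean direct verification that $u(\fg_5)^J \cong R(p)$; in particular the gauge equivalence of $u(\fg_5)$ and $R(p)$ is established. However, the corollary asserts that $u(\fg_5)^{J'} \cong R(p)$ for \emph{every} Drinfeld twist $J'$ rendering the result noncocommutative, and here your argument has a genuine gap. In the final sentence you write that ``it suffices to locate a group-like element of order $p$ generating a normal Hopf subalgebra,'' but you do not locate one. For an arbitrary twist $J'$ there is no reason $1+y$ (or any other obvious candidate) should be group-like: the comultiplication $\Delta^{J'}(y)$ depends on $J'$, and your Pascal-rule computation used the specific form of $J$ in an essential way. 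Without an independent argument producing a nontrivial group-like element in $u(\fg_5)^{J'}$, Theorem~\ref{thm:Radford}(iii) cannot be invoked, and the first sentence of the corollary remains unproved.

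The paper closes this gap by passing to the dual rather than hunting for group-likes in $H=u(\fg_5)^{J'}$. Since any Drinfeld twist leaves the algebra structure untouched, $H^*$ has the same \emph{coalgebra} structure as $u(\fg_5)^*$. One checks once and for all that $u(\fg_5)^*$ is pointed with $|G(u(\fg_5)^*)|=p$ (because $(y)$ is the Jacobson radical of $u(\fg_5)$ and the quotient is $\kk[x]/(x^p-x)$). Hence $H^*$ is pointed with $p$ group-likes, and it is noncommutative exactly when $H$ is noncocommutative. Theorem~\ref{thm:Radford}(ii) applied to $H^*$ then gives $H^*\cong R(p)$, and self-duality of $R(p)$ finishes. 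This argument is uniform in $J'$ and avoids any twist-by-twist computation; your explicit calculation, by contrast, is more concrete but intrinsically tied to the particular $J$ in \eqref{eq:twist}.
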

\begin{proof} Note that $u(\fg_5)=\kk\langle x,y\rangle/(x^p-x, y^p, [x,y]-y)$ as Hopf algebras where $x, y$ are primitive. Thus $(y)=u(\fg_5)y$ is a nilpotent ideal of $u(\fg_5)$ and $u(\fg_5)/(y)  \cong \kk[x]/(x^p-x)$ is semisimple and commutative. Therefore, $(y)$ is the Jacobson radical of $u(\mathfrak g_5)$ and $u(\fg_5)^*$ is pointed with $|G(u(\fg_5)^*)|=p$. Let $J$ be a Drinfeld twist of $u(\fg_5)$ such that its associated twisted Hopf algebra $H = u(\fg_5)^J$ is noncocommutative or $H^*$ is noncommutative.  Since $H\cong u(\fg_5)$ as algebras, $H^* = u(\fg_5)^*$ as coalgebras. Therefore, $H^*$ is pointed with $|G(H^*)|=p$. Now, it follows from Theorem \ref{thm:Radford}(ii) that $H^* \cong R(p)$, and so $H \cong R(p)$ as $R(p)$ is self-dual. The last assertion follows from the existence of such a Drinfeld twist $J$ given by \eqref{eq:twist} according to \cite[Ex. 6.14]{Ge}.  
\end{proof}

\begin{remark}
The Radford algebra first appeared in the work of Radford as an example of pointed Hopf algebra of dimension $p^2$ with antipode of order $2p$ (see \cite{Rad1977}).  Taft  pointed out that it is the smash product of the commutative and cocommutative Hopf algebras $\kk[C_p]$ and $\kk[C_p]^*$, and its antipode is not  semisimple. Theorem \ref{thm:Radford} provides several characterizations of the Radford algebra. In particular, it is the unique nontrivial extension of $\kk[C_p]^*$ by $\kk[C_p]$. 
\end{remark}

Finally, we would like to classify all the extensions of $p$-dimensional Hopf algebras using the classification theorem of $p^2$-dimensional Hopf algebras. Recall that there are five nonisomorphic 2-dimensional $p$-restricted Lie algebras $\mathfrak g_i$ with basis elements $x,y$ (cf. \cite[Appendix A]{Wan}). Four of them are abelian, and they are given by
\begin{center}
 $\fg_1$: $x^p=x$, $y^p=y$;\, $\fg_2$:   $x^p=x$, $y^p=0$;\, $\fg_3$: $x^p=y$, $y^p=0$;\, $\fg_4$: $x^p=0$, $y^p=0$.
\end{center}
The $p$-restricted Lie algebra $\fg_5$ introduced in the remark preceding Corollary \ref{GR} is the unique $2$-dimensional nonabelian $p$-restricted Lie algebra up to isomorphism. 

\begin{thm}\cite[Thm. 7.4]{Wan}\cite[Thm. 3.3]{WW14} \label{thm:p2}
Let $\kk$ be an algebraically closed field  of characteristic $p>0$, and $H$  a pointed Hopf algebra of dimension $p^2$. Then $H$ is isomorphic to one of the following  14 nonisomorphic  Hopf algebras:
\begin{itemize}
\item[\rm (1)] the group algebra $\kk[G]$, where $G\cong C_{p^2}$ or $C_p\times C_p$;
\item[\rm (2)] the dual group algebra $\kk[C_{p^2}]^*$;
\item[\rm (3)] the restricted universal enveloping algebra $u(\mathfrak g_i)$ with $1\leq i\leq 5$;
\item[\rm (4)] the dual restricted universal enveloping algebra $u(\mathfrak g_i)^*$ with $i=3,5$;
\item[\rm (5)] the tensor product Hopf algebras $\kk[C_p]\otimes u(\mathfrak a_1)$ and $\kk[C_p]\otimes u(\mathfrak a_2)$;
\item[\rm (6)] the Radford algebra $R(p)$;
\item[\rm (7)] the connected and local Hopf algebra $\G_p(2):=\kk[x]/(x^{p^2})$ with $\Delta(x)=x\otimes 1+1\otimes x+\sum_{i=1}^{p-1} \frac{x^{pi}}{i!}\otimes \frac{x^{p(p-i)}}{(p-i)!}$. 
\end{itemize}
\end{thm}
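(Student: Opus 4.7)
The plan is to stratify the pointed Hopf algebra $H$ by $|G(H)|$, which by the Nichols--Zoeller theorem divides $p^2$ and so lies in $\{1, p, p^2\}$. I would handle each of the three cases by a different reduction, and they together exhaust all fourteen isomorphism classes.

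If $|G(H)| = p^2$ then $H = H_0 = \kk[G(H)]$ and, since every group of order $p^2$ is abelian, $G(H) \cong C_{p^2}$ or $C_p \times C_p$, producing item (1). If $|G(H)| = 1$ then $H$ is connected, and I would further split by $\dim P(H) \in \{1, 2\}$. When $\dim P(H) = 2$, the restricted enveloping algebra $u(P(H))$ injects into $H$ as a Hopf subalgebra and equals $H$ by a dimension count; classifying the $2$-dimensional $p$-restricted Lie algebras up to isomorphism then yields the five algebras $\fg_1, \ldots, \fg_5$ of item (3). When $\dim P(H) = 1$, I would trace the coradical filtration: the associated graded $\mathrm{gr}_{\fm}H$ is the restricted enveloping algebra of its primitives, but lifting back to $H$ admits a subtle divided-power contribution to $\Delta$; the three resulting candidates $\kk[C_{p^2}]^*$, $u(\fg_3)^*$, and $\G_p(2)$ are distinguished by their underlying algebra structures (semisimple, non-cyclic local, and cyclic local respectively).

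For the mixed case $|G(H)| = p$, fix a generator $g \in G(H)$. If $H$ is noncommutative, Theorem \ref{thm:Radford}(ii) immediately identifies $H$ with the Radford algebra $R(p)$. If $H$ is commutative, I would exhibit a normal Hopf subalgebra of dimension $p$ via a nontrivial $(1, g^i)$-primitive element and realize $H$ as a cleft extension of one $p$-dimensional Hopf algebra by another. Enumerating the $p$-dimensional pieces via Theorem \ref{thm:p1} and invoking the cleft-extension classification of Propositions \ref{CleftCp} and \ref{ExtG}, together with the vanishing of $H^2_{sw}(\kk[C_p]^*, -)$ from Theorem \ref{thm:zero2nd}, then yields the three commutative candidates $\kk[C_p] \otimes u(\fa_1)$, $\kk[C_p] \otimes u(\fa_2)$, and $u(\fg_5)^*$.

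The main obstacle is the connected subcase with $\dim P(H) = 1$: the structural tools exploiting either a nontrivial group-like or a $2$-dimensional space of primitive elements do not apply, and one must carefully analyze the second coradical layer $H_2 / H_1$ to determine how $\Delta$ on a lift of the primitive deviates from being primitive, producing either the divided-power contribution defining $\G_p(2)$ or the simpler "doubling" appearing in $u(\fg_3)^*$ and $\kk[C_{p^2}]^*$. A secondary care point is ruling out coincidences among the fourteen candidates; this can be done using invariants such as the antipode order, the coradical filtration dimensions, and the algebra-theoretic type (semisimple, cyclic local, non-cyclic local).
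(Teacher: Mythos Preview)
The paper does not prove Theorem \ref{thm:p2}; it is quoted from the external references \cite{Wan} and \cite{WW14}, so there is no in-paper proof to compare against. Your stratification by $|G(H)|$ matches the overall architecture of those references, and your use of Theorem \ref{thm:Radford} is legitimate since the paper explicitly establishes it independently of the classification.

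There is, however, a genuine gap in your treatment of the commutative case with $|G(H)|=p$. Propositions \ref{CleftCp} and \ref{ExtG} classify cleft extensions only up to \emph{twist equivalence of comodule algebras}; they pin down the algebra structure of $H$ together with its $L$-comodule structure, but say nothing about the comultiplication of $H$ itself. Moreover, their hypotheses do not cover the case you need most: with $K=\kk[C_p]$ normal and $L=H/K^+H$, Proposition \ref{CleftCp} only applies when $L\cong \kk[C_p^n]^*=u(\fa_1)$, and Proposition \ref{ExtG} requires the coefficient algebra to be semisimple, which $\kk[C_p]$ is not. The subcase $L\cong u(\fa_2)$, which is exactly where $u(\fg_5)^*$ must appear, is handled by neither proposition. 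A workable replacement is a direct generators-and-relations argument: pick a nontrivial $(1,g)$-skew primitive $x$, observe that $g,x$ generate $H$ and commute, and compute that $x^p$ is primitive; splitting on whether $P(H)=0$ then yields $\kk[C_p]\otimes u(\fa_i)$ when a genuine primitive exists, and the single remaining presentation $\kk[g,x]/(g^p-1,x^p,[g,x])$ with $\Delta(x)=x\otimes 1+g\otimes x$ when $P(H)=0$, which one then identifies with $u(\fg_5)^*$.

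Your identification of the connected $\dim P(H)=1$ subcase as the main obstacle is accurate; this is essentially the content of \cite{Wan}, and the coradical-filtration analysis you sketch is the right shape but would require substantially more detail to constitute a proof.
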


\begin{remark} There are relationships between these Hopf algebras listed in Theorem \ref{thm:p2}.
\begin{enumerate}  [label=\rm{(\roman*)}]
\item  We have the isomorphisms of Hopf algebras  $u(\mathfrak g_1)^*\cong \kk[C_p\times C_p]$ and $u(\mathfrak g_2)^*\cong \kk[C_p]\otimes u(\mathfrak a_2)$.
\item The Hopf algebras $u(\mathfrak g_4)$, $\kk[C_p]\otimes u(\fa_1)$, $R(p)$ and $\mathcal G_p(2)$ are self-dual.
\item  The Radford algebra $R(p)$ has a $\kk$-basis $\{x^ig^j\}_{0\leq i,j\leq p-1}$. Let $\gamma, \chi \in R(p)^*$  defined by $\gamma(x^ig^j)=(-1)^i$, $\chi(x^ig^j)=-j(-1)^i$. One can check directly that $\gamma$ is a character of $R(p)$, and $\chi$ is $(\epsilon,\gamma)$-primitive which satisfy
 $\gamma^p=1, \chi^p=\chi$ and $[\gamma,\chi]=\gamma(\gamma-1)$. The assignment $g \mapsto \gamma, x \mapsto \chi$ extends to a Hopf algebra isomorphism from $R(p)$ to $R(p)^*$.
\item  Let $\{\delta_i\}_{i=0}^{p^2-1}$ denote the dual basis of $\{x_i\}_{i=0}^{p^2-1}$ for $(\kk[x]/(x^{p^2}))^*$. 
One can check directly that $(\delta_p)^p = \delta_1$ and $\Delta(\delta_{p})=\delta_{p}\otimes 1+1\otimes \delta_{p}+\sum_{i=1}^{p-1} \frac{(\delta_1)^i}{i!}\otimes \frac{(\delta_1)^{p-i}}{(p-i)^!}$. Therefore, 
 $\mathcal G_2(p)^*=\kk[\delta_{p}]$ and the map $x \mapsto \delta_p$ extends to a Hopf algebra isomorphism from $\G_p(2)$ and $\G_p(2)^*$. Note that $\mathcal G_p(2)$ is not primitively generated, and its comultiplication of $x$  appears in the  2-step iterated Hopf-Ore extensions (IHOEs) in positive characteristic \cite{BZ20}. 

\item[(v)] The Hopf algebras $R(p)$ and $u(\fg_5)$ are gauge equivalent by Corollary \ref{GR}.
\end{enumerate}
 \end{remark}
The following corollary is an immediate consequence of the above remark.
\begin{cor}\label{c:dual_closure}
    The pointed Hopf algebras over $\kk$ of dimension $p^2$ are closed under duality.
\end{cor}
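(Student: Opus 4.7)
The plan is to reduce the statement to a finite check via the classification in Theorem \ref{thm:p2}, which lists the fourteen nonisomorphic $p^2$-dimensional pointed Hopf algebras over $\kk$. Since $\dim H^* = \dim H$ and $H^{**}\cong H$ for any finite-dimensional Hopf algebra, the duality operation $H \mapsto H^*$ is an involution on the collection of $p^2$-dimensional Hopf algebras; it therefore suffices to check that this involution permutes the fourteen-element set of pointed representatives.

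I would then go through the list in Theorem \ref{thm:p2} and identify the dual of each algebra using the isomorphisms recorded in the preceding remark. Four algebras---namely $u(\fg_4)$, $\kk[C_p]\otimes u(\fa_1)$, $R(p)$, and $\mathcal G_p(2)$---are self-dual by items (ii), (iii), and (iv). The remaining ten organize into five dual pairs: $\kk[C_{p^2}] \leftrightarrow \kk[C_{p^2}]^*$ is immediate from the list itself; $\kk[C_p\times C_p] \leftrightarrow u(\fg_1)$ and $\kk[C_p]\otimes u(\fa_2) \leftrightarrow u(\fg_2)$ come from item (i); and the pairs $u(\fg_3) \leftrightarrow u(\fg_3)^*$, $u(\fg_5) \leftrightarrow u(\fg_5)^*$ are tautological because both members of each pair appear in the classification. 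These cases exhaust the fourteen algebras, so duality permutes the list and the corollary follows.

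The argument is essentially bookkeeping once the remark is in hand, so there is no substantive obstacle. The only ingredient one might wish to verify independently is the implicit self-duality $u(\fa_2)^*\cong u(\fa_2)$ (needed to derive $u(\fg_2)^*\cong u(\fa_1)^*\otimes u(\fa_2)^*\cong\kk[C_p]\otimes u(\fa_2)$): this follows by combining Lemma \ref{lem:local}, which forces $u(\fa_2)^*$ to be connected of dimension $p$, with the classification of $p$-dimensional Hopf algebras in Theorem \ref{thm:p1}. An entirely analogous tensor-product decomposition then produces $u(\fg_1)^*\cong\kk[C_p\times C_p]$, completing the verification.
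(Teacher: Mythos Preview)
Your proposal is correct and is essentially the paper's own argument: the paper simply states that the corollary is ``an immediate consequence of the above remark,'' and you have spelled out precisely the bookkeeping that remark encodes---identifying the four self-dual algebras and the five dual pairs among the fourteen Hopf algebras of Theorem~\ref{thm:p2}. The extra verification of $u(\fa_2)^*\cong u(\fa_2)$ you offer is not strictly needed (item (i) of the remark already records $u(\fg_2)^*\cong\kk[C_p]\otimes u(\fa_2)$), but it does no harm.
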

Now, we can provide an affirmative answer to Question B for those $p^2$-dimensional Hopf algebras which are extensions of $p$-dimensional ones. 

\begin{thm}\label{cor:extp2}
Let $H$ be a $p^2$-dimensional Hopf algebra over an algebraically closed field $\kk$ of characteristic $p>0$. Then $H$  admits a normal Hopf subalgebra of dimension $p$ if and only if $H$ is pointed. In particular, every pointed $p^2$-dimensional Hopf algebra is an extension of $p$-dimensional ones.
\end{thm}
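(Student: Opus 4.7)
The plan is to prove the biconditional by case analysis in each direction; the ``in particular'' clause then follows immediately, since a pointed $p^2$-dimensional $H$ would have a normal Hopf subalgebra $K$ of dimension $p$ by the forward implication, so $1\to K\to H\to H/K^+H\to 1$ exhibits $H$ as an extension of $p$-dimensional Hopf algebras.

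For the forward direction, I inspect each of the fourteen entries of Theorem~\ref{thm:p2}. Every entry other than $R(p)$ is either commutative or cocommutative, so normality of any Hopf subalgebra is automatic, and a dimension-$p$ Hopf subalgebra is visible in each case from the explicit presentation. For $R(p)$ itself, normality of the Hopf subalgebra $\kk[g]\cong\kk[C_p]$ was verified in the proof of Theorem~\ref{thm:Radford}.

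For the converse, let $K\subset H$ be a normal Hopf subalgebra of dimension $p$, put $L:=H/K^+H$, and note that Theorem~\ref{thm:p1} forces $K,L\in\{\kk[C_p],u(\fa_1),u(\fa_2)\}$, each of which is pointed. If $L$ is connected (i.e., $L\in\{u(\fa_1),u(\fa_2)\}$), then Lemma~\ref{lem:SESlocal}(ii) together with the pointedness of $K$ immediately yields that $H$ is pointed. If $L=\kk[C_p]$ and $K\in\{\kk[C_p],u(\fa_2)\}$ is local, then $L\cong\kk[y]/(y^p)$ is itself local as an algebra, Lemma~\ref{lem:SESlocal}(i) gives $\irr(H)=\irr(L)=\{\kk\}$, whence $H$ is local, $H^*$ is connected (Lemma~\ref{lem:local}) and hence pointed, and Corollary~\ref{c:dual_closure} applied to $H^*$ gives $H$ pointed.

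The only remaining case, and the main obstacle, is $K=u(\fa_1)$ with $L=\kk[C_p]$; here neither half of Lemma~\ref{lem:SESlocal} applies. Since the claim is already established for $p\le 5$ in Section~5, I may assume $p>2$. The primitive generator $x$ of $K$ is semisimple and satisfies $\kk[x]=K$ of dimension $p=p^{n-1}$ with $n=2$, and $H$ is nonsemisimple because its algebra quotient $L$ is not semisimple. Proposition~\ref{prop:semiprim} then forces $He$ to be a $p$-dimensional indecomposable projective $H$-module for each of the $p$ primitive idempotents $e\in K$, and comparing the decomposition $H=\bigoplus_e He$ with the Krull--Schmidt decomposition $H\cong\bigoplus_{V\in\irr(H)}P(V)^{\dim V}$ yields $\sum_V\dim V=p$. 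Dualizing the exact sequence also exhibits a nonzero primitive element in $H^*$, so Corollary~\ref{cor:cad} applied to $H^*$ gives
\[
\sum_{V\in\irr(H)}(\dim V)^2 \;=\; \dim(H^*)_0 \;\le\; p.
\]
Combined with the trivial inequality $(\dim V)^2\ge\dim V$, strict unless $\dim V=1$, these bounds force every simple $H$-module to be one-dimensional. Consequently $|G(H^*)|=|\irr(H)|=p=p^{n-1}$, so Theorem~\ref{thm:basic} gives $H^*$ pointed, and one final application of Corollary~\ref{c:dual_closure} completes the proof.
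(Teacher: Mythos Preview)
Your proof is correct, and the overall organization matches the paper's: both directions rely on the classification (Theorem~\ref{thm:p2}) and on Lemma~\ref{lem:SESlocal} to dispatch all cases except $K\cong u(\fa_1)$, $L\cong\kk[C_p]$.

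The genuine difference is in that last case. The paper invokes the cleft-extension machinery of Section~6: since $H$ is an $L$-cleft extension of $K$, Proposition~\ref{ExtG} forces $H$ to be isomorphic as an algebra to $K\otimes L$ (the Heisenberg-double alternative being ruled out since it is a matrix algebra with no $1$-dimensional module, while $H$ has the counit); hence $H$ is commutative and $H^*$ is cocommutative, so pointed. Your route instead stays entirely within the structural results of Sections~3--5: Proposition~\ref{prop:semiprim} gives $p$ indecomposable projectives of dimension $p$, so $\sum_V\dim V=p$; dualizing provides a primitive element in $H^*$, so Corollary~\ref{cor:cad} gives $\sum_V(\dim V)^2\le p$; and the two bounds squeeze all $\dim V$ down to $1$, whence Theorem~\ref{thm:basic} applies. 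This avoids the Sweedler-cohomology and twist-equivalence apparatus altogether, at the cost of the small detour through $p\le5$ to secure $p>2$ for Proposition~\ref{prop:semiprim}. Both arguments are short; yours has the virtue of showing that Section~6 is not strictly needed for Theorem~\ref{cor:extp2}.
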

\begin{proof}
If $H$ is pointed, it follows from the classification Theorem \ref{thm:p2} that $H$ has  a normal Hopf subalgebra of dimension $p$. Conversely, suppose $H$ has a normal Hopf subalgebra $K$ of dimension $p$. It suffices to show that $H$ or $H^*$ is pointed by Corollary \ref{c:dual_closure}. Now, $H$ fits into a short exact sequence of Hopf algebras
\[
1\to K\to H\to L\to 1,
\]
where $L=H/K^+H$ is another $p$-dimensional Hopf algebra by the Nichols-Zoeller theorem. By Theorem \ref{thm:p1}, $L, K \cong \kk[C_p]$, $u(\fa_1)$ or $u(\fa_2)$. If $L \cong u(\fa_i)$, then $L$ is connected and so $H$ is pointed by Lemma \ref{lem:SESlocal}(ii). On the other hand, if $K \cong  \kk[C_p]$ or $u(\fa_2)$, then $K$ is local and so $H^*$ is pointed by  Lemma \ref{lem:SESlocal}(i). The remaining case is that $K \cong u(\fa_1)$ and $L \cong \kk[C_p]$. In particular, $H$ is an $L$-cleft extension of $K$. It follows from Proposition \ref{ExtG} that $H \cong K \o L$ as $\kk$-algebras, which is commutative. Thus, $H^*$ is pointed.
\end{proof}

As a conclusion of our section, we list all possible nontrivial exact sequences of Hopf algebras 
\[
1\to K\to H\to L\to 1,
\]
where $K$ and $L$ are $p$-dimensional Hopf algebras over $\kk$. 
\begin{table}[H]\label{T2}
\caption{Nontrivial exact sequences $1\to K\to H\to L\to 1$}
\begin{center}
\begin{tabular}{||c|c | c ||} 
 \hline\hline
$K$ & $H$ & $L$ \\
\hline
$\kk[C_p]$ & $\kk[C_{p^2}]$ & $\kk[C_p]$ \\
\hline
$\kk[C_p]$ &  $R(p)$ & $u(\mathfrak a_1)$\\
\hline
$\kk[C_p]$ & $u(\mathfrak g_5)^*$ & $u(\mathfrak a_2)$\\
\hline
$u(\mathfrak a_1)$ & None  & $\kk[C_p]$\\
\hline
$u(\mathfrak a_1)$ &  $\kk[C_{p^2}]^*$ & $u(\mathfrak a_1)$\\
\hline
$u(\mathfrak a_1)$ &  None & $u(\mathfrak a_2)$\\
\hline
$u(\mathfrak a_2)$ &  None & $\kk[C_p]$\\
\hline
$u(\mathfrak a_2)$ & $u(\mathfrak g_5)$ & $u(\mathfrak a_1)$\\
\hline
$u(\mathfrak a_2)$ & $u(\mathfrak g_3)$, $u(\mathfrak g_3)^*$, $\mathcal G_p(2)$ & $u(\mathfrak a_2)$\\
\hline
\hline
\end{tabular}
\end{center}
\end{table}

\begin{remark}
    The absence of a nontrivial extension of $\kk[C_p]$ by $\kk[C_p]^*$ can be proven without using Theorem \ref{thm:p2}. If $H$ is an extension, then $H^*$ can also fit into that same extension.  By Proposition \ref{ExtG}, both $H$ and $H^*$ are the tensor product algebra $\kk[C_p]^*\otimes \kk[C_p]$. In particular, $H$ admits a nonzero primitive element $x$ and a group-like element $g$ of order $p$. Since $H$ is generated by the commuting elements $g,x$,  $H \cong  \kk[x]\o \kk[C_p]$ as Hopf algebras.  Since $\dim P(H) = 1$ and $\kk[C_p]^*$ admits a nonzero primitive element $x \in \kk[C_p]^*$, we have $\kk[x]\cong  \kk[C_p]^*$.
    \end{remark}

\end{document}